\documentclass[11pt]{article}
\usepackage[letterpaper,margin=1in]{geometry}
\usepackage[T1]{fontenc}
\usepackage{lmodern}
\usepackage{amsmath,amssymb,amsthm,mathtools}
\usepackage{microtype}
\usepackage{algorithm,algpseudocode}
\usepackage{aliascnt}
\usepackage{graphicx}
\usepackage[authoryear,round]{natbib}
\usepackage[colorlinks=true,linkcolor=blue,citecolor=blue,urlcolor=blue]{hyperref}
\usepackage[nameinlink,noabbrev]{cleveref}
\floatstyle{ruled}
\newfloat{procedure}{htbp}{lop}[section]
\floatname{procedure}{Procedure}
\crefname{procedure}{procedure}{procedures}
\Crefname{procedure}{Procedure}{Procedures}
\makeatletter
\providecommand{\theHALG@line}{}
\renewcommand{\theHALG@line}{\@captype.\csname theH\@captype\endcsname.\arabic{ALG@line}}
\makeatother
\theoremstyle{plain}
\newtheorem{theorem}{Theorem}[section]
\newaliascnt{lemma}{theorem}
\newtheorem{lemma}[lemma]{Lemma}
\aliascntresetthe{lemma}
\crefname{theorem}{theorem}{theorems}
\Crefname{theorem}{Theorem}{Theorems}
\crefname{lemma}{lemma}{lemmas}
\Crefname{lemma}{Lemma}{Lemmas}
\theoremstyle{definition}
\newaliascnt{assumption}{theorem}
\newtheorem{assumption}[assumption]{Assumption}
\aliascntresetthe{assumption}
\crefname{assumption}{assumption}{assumptions}
\Crefname{assumption}{Assumption}{Assumptions}
\theoremstyle{remark}
\newaliascnt{remark}{theorem}
\newtheorem{remark}[remark]{Remark}
\aliascntresetthe{remark}
\crefname{remark}{remark}{remarks}
\Crefname{remark}{Remark}{Remarks}

\theoremstyle{plain}
\AtBeginDocument{%
  \setlength{\abovedisplayskip}{8pt plus 2pt minus 2pt}%
  \setlength{\belowdisplayskip}{8pt plus 2pt minus 2pt}%
  \setlength{\abovedisplayshortskip}{3pt plus 1pt}%
  \setlength{\belowdisplayshortskip}{5pt plus 2pt minus 2pt}%
}
\allowdisplaybreaks[1]
\hypersetup{pdftitle={Regularized Newton Methods via Proximal Point Frameworks},pdfauthor={Chang He and Shuzhong Zhang},pdfsubject={Proximal Newton subproblems, curvature certificates, and universal scale adaptation}}

\newcommand{\bR}{\mathbb{R}}
\newcommand{\epsg}{\varepsilon_g}
\newcommand{\epsH}{\varepsilon_H}
\newcommand{\op}{\mathrm{op}}
\newcommand{\inner}[2]{\langle #1,#2\rangle}
\newcommand{\anorm}[1]{\|#1\|_{k,\mu}}
\DeclareMathOperator*{\argmin}{argmin}

\newcommand{\cO}{\mathcal{O}}
\Crefname{appendix}{Appendix}{Appendix}
\title{Proximal Point Newton Methods for Nonconvex Optimization}
\author{
{Chang He} \thanks{Department of Industrial and Systems Engineering, University of Minnesota. \texttt{hec@umn.edu}}
\and
{Shuzhong Zhang} \thanks{Department of Industrial and Systems Engineering, University of Minnesota. \texttt{zhangs@umn.edu}}
}
\date{}
\begin{document}
\maketitle

\begin{abstract}
Tensor methods offer improved complexity guarantees for nonconvex optimization, but evaluating high-order derivatives and solving the corresponding subproblems can be computationally expensive. In this paper, we develop a proximal point Newton framework that exploits third-derivative smoothness using only gradients and Hessians. Under Hessian Lipschitz continuity, a single-loop proximal point Newton method that alternates between negative-curvature directions and Newton steps finds an $(\epsg,\epsH)$-approximate second-order stationary point within $\cO(\max\{\epsg^{-3/2},\epsH^{-3}\})$ second-order oracle evaluations. When the third derivative is also Lipschitz continuous, we introduce an accelerated proximal point Newton method and obtain a dimension-free complexity of $\widetilde\cO(\max\{\epsg^{-4/3},\epsH^{-2}\})$, matching that of the third-order tensor methods up to logarithmic factors without constructing tensors or solving tensor subproblems. We further develop an adaptive inexact Newton--CG variant that requires no prior knowledge of Lipschitz constants and preserves the improved oracle complexity with high probability. Under uniformly bounded queried Hessians, this variant achieves a Hessian-vector product complexity of $\widetilde\cO(\max\{\epsg^{-5/3},\epsH^{-5/2}\})$, improving upon the previous $\widetilde\cO(\max\{\epsg^{-3}\epsH^{5/2},\epsH^{-7/2}\})$ result attained by existing second-order methods such as cubic regularized Newton method and Newton--CG method. Preliminary numerical experiments are encouraging.
\end{abstract}

\medskip

{\bf Keywords:} nonconvex optimization; proximal point method; Newton--CG method; oracle complexity.

\medskip

{\bf MSC Classification (2020):} 90C26, 90C30, 49M15, 90C60.

\section{Introduction}\label{sec:introduction}
Tensor methods have attracted wide attention in recent decades because high-order derivative information can improve the worst-case evaluation complexity of smooth optimization. Specifically, for nonconvex optimization, when the $p$th derivative is Lipschitz continuous, the adaptive regularization method based on $p$th-order Taylor models can find an $(\epsg,\epsH)$-approximate second-order stationary point, namely a point $x$ satisfying $\|\nabla f(x)\|\le\epsg$ and $\lambda_{\min}(\nabla^2 f(x)) > -\epsH$ with $\lambda_{\min}(\cdot)$ denoting the smallest eigenvalue, within at most $\cO(\max\{\epsg^{-(p+1)/p},\epsH^{-(p+1)/(p-1)}\})$ times call for $p$th-order oracle evaluations\footnote{A $p$th-order oracle evaluation returns the function value and derivative information up to order $p$, i.e., $(f(x), \nabla f(x), \ldots, \nabla^p f(x))$.} \citep{birgin2017worst,cartisgouldtoint2020second}. In convex optimization, an accelerated $p$th-order tensor method finds an $\varepsilon$-approximate optimal solution within $\cO(\varepsilon^{-1/(p+1)})$ $p$th-order oracle evaluations \citep{nesterov2021implementable}. However, these benefits come at the cost of needing to solve increasingly difficult subproblems. Classical second-order methods, such as trust-region methods \citep{conn2000trust} and cubic regularized Newton methods \citep{nesterovpolyak2006}, admit subproblems for which globally optimal solutions can be computed efficiently using matrix computations. In contrast, for a regularized $p$th-order Taylor model with $p\ge3$, finding a global solution  for the tensor subproblem itself is computationally intractable in general. Therefore, efficient implementation is a critical issue for tensor methods.

In convex optimization, \citet{nesterov2021implementable} showed that appropriately regularized third-order Taylor models are convex and developed an efficient non-Euclidean method for solving the resulting subproblems. Subsequently, \citet{nesterov2021superfast,kamzolov2020near} replaced third-derivative actions by directional finite differences of gradients, thereby obtaining a second-order implementation of the third-order method. Related inexact and high-order proximal point frameworks provide alternative lower-order implementations of tensor methods \citep{doikovnesterov2020,grapiglia2021inexact,nesterov2022inexact,nesterov2023proximal,grapiglianesterov2023}. From a different perspective, \citet{ahmadi2024higher} constructed SOS-convex approximations of $p$th-order Taylor models that can be minimized through semidefinite programming. More recently, \citet{doikovmishchenkonesterov2024} exploited the Lipschitz continuity of the third derivative directly through gradient-regularized Newton steps, without explicitly constructing the regularized third-order Taylor model. Across these developments, convexity is the key structural property that enables the implementability of tensor methods.

In nonconvex optimization, the lack of convexity makes the regularized third-order subproblem substantially more difficult to solve. A natural approach is to replace the original regularized third-order Taylor model with auxiliary models having more tractable structures. Along this direction, \citet{cartiszhu2026,cartiszhu2026second} developed quadratic--quartic and cubic--quartic regularization schemes that iteratively minimize the regularized third-order subproblem. Recent studies have also established exact semidefinite programming reformulations for certain classes of these models \citep{zhu2026sufficiently,zhou2026tight}. \citet{zhu2025global} characterized global minimizers of quartically regularized third-order subproblems and developed specialized algorithms that exploit diagonal and low-rank tensor structures. More recently, \citet{cartishauserliuwelzelzhu2026} studied practical adaptive third-order implementations that further improve the selection of regularization parameters and the solution of subproblems. Despite the progress in subproblem tractability, the resulting methods still require explicit access to third-order derivative information. Moreover, solving a sequence of structured auxiliary subproblems may introduce additional computational costs beyond the outer evaluation complexity \citep{cartiszhu2026}.

The above considerations motivate a complementary line of research that approximates high-order derivatives using lower-order information. \citet{welzelhauser2024} developed high-order secant updates that generalize quasi-Newton constructions to derivative tensors. \citet{cartis2025global} then incorporated secant updates and lazy tensor approximations into adaptive regularization frameworks, with periodic exact or finite-difference refreshes. In particular, they show that an $\cO(d\max\{\epsg^{-4/3},\epsH^{-2}\})$ second-order oracle evaluations are required to find an $(\epsg,\epsH)$-approximate second-order stationary point for their third-order variant, where $d$ is the problem dimension. To reduce the cost of constructing these approximations, \citet{doikovgrapiglia2026} reused finite-difference tensors across multiple iterations. As a result, their general order-$(p-1)$ implementation requires at most $\cO(d^{1/p}\epsg^{-(p+1)/p}+d)$ $(p-1)$th-order oracle evaluations to find an $\epsg$-approximate first-order stationary point. Recent advances lower the derivative order required of the objective function. Nevertheless, existing oracle complexity results have an explicit dependence on the dimension, and tensor approximation does not eliminate the need to solve the resulting regularized $p$th-order subproblems.

In this paper, we bypass tensor subproblems altogether. Previous work has mainly focused on regularized third-order Taylor models, which inevitably require tensor computations and the solution of the associated subproblems. Instead, we develop a \textit{pure second-order method} based on the proximal point framework. At each iteration $k$, we apply either a negative-curvature direction or a Newton direction to solve the proximal point subproblem $\min_{x \in \bR^d} \Phi_k(x) \coloneqq f(x) + \mu \|x - x^k\|^2$ with a suitable regularization parameter $\mu$. Therefore, our method queries only gradients and Hessians, and its matrix computations consist of eigen-decompositions and linear system solves. Under Lipschitz continuity of the Hessian and the third derivative, our method achieves a dimension-free $\widetilde \cO(\max\{\epsg^{-4/3},\epsH^{-2}\})$ second-order oracle complexity for finding an $(\epsg,\epsH)$-approximate second-order stationary point, matching the complexity guarantee of the third-order tensor methods up to logarithmic factors. To the best of our knowledge, this is the first result to achieve such a guarantee using a second-order method. We then develop an adaptive version based on Newton-CG with inexact eigen-decompositions and linear system solves, which establishes a Hessian-vector product complexity of $\widetilde\cO(\max\{\epsg^{-5/3},\epsH^{-5/2}\})$. Finally, numerical experiments demonstrate the computational efficiency of our second-order method.

\section{Preliminaries}\label{sec:preliminaries}
Let $\mathbb{R}^d$ denote the $d$-dimensional Euclidean space. In this paper, we study the unconstrained minimization problem
\begin{equation}\label{eq:problem}
\min_{x\in\bR^d} f(x), 
\end{equation}
where $f$ is potentially nonconvex and $f_{\inf} \coloneqq \inf_{x\in\bR^d}f(x)>-\infty$. We aim to find an $(\epsg,\epsH)$-approximate second-order stationary point that satisfies $\|\nabla f(x)\|\le\epsg$ and $\lambda_{\min}(\nabla^2 f(x)) > -\epsH$, where $\lambda_{\min}(\cdot)$ denotes the smallest eigenvalue. The identity matrix is denoted by $I$. A second-order oracle evaluation returns $(f(x),\nabla f(x),\nabla^2 f(x))$. We use the big-O notation $\mathcal{O}(\cdot)$ to hide absolute constants and the notation $\tilde{\mathcal{O}}(\cdot)$ to hide polylogarithmic factors. The standard inner product is denoted by $\langle \cdot, \cdot \rangle$. We use $\|\cdot\|$ to denote the Euclidean norm for vectors and $\|\cdot\|_{\op}$ to denote the induced operator norm for matrices, respectively. We use $\nabla^3 f(x)$ to denote the third derivative of $f$ at point $x$, which is a symmetric third-order tensor, and its operator norm is defined as $\|\nabla^3 f(x)\|_{\op}\coloneqq \sup_{\|u\|,\|v\|,\|w\|\le1}|\nabla^3 f(x)[u,v,w]|$. The notation $\nabla^3 f(x)[u,\bullet,\bullet]$ denotes the corresponding symmetric matrix. For simplicity, we use the notation $\nabla^3 f(x)[u]^{\otimes 3}$ whenever all three vectors are the same. We make the following assumptions through the paper.

\begin{assumption}\label{ass:hessian}
The Hessian of the objective function $f$ is $L_H$-Lipschitz continuous, i.e., $\|\nabla^2 f(x)-\nabla^2 f(y)\|_{\op}\le L_H\|x-y\|$ for all $x,y\in\bR^d$.
\end{assumption}

\begin{lemma}\label{lem:lipschitz-taylor}
Under \Cref{ass:hessian}, for all $x,y\in\bR^d$, we have
\begin{align}
\|\nabla f(y)-\nabla f(x)-\nabla^2 f(x)(y-x)\|
&\le\tfrac{L_H}{2}\|y-x\|^2,\label{eq:gradient-taylor}\\
\big|f(y)-f(x)-\inner{\nabla f(x)}{y-x}
-\tfrac{1}{2}\inner{\nabla^2 f(x)(y-x)}{y-x}\big|
&\le\tfrac{L_H}{6}\|y-x\|^3.\label{eq:function-taylor}
\end{align}
\end{lemma}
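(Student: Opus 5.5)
The plan is to apply the fundamental theorem of calculus along the segment $x_t \coloneqq x+t(y-x)$, $t\in[0,1]$, and then bound the resulting remainder using \Cref{ass:hessian}. Write $h\coloneqq y-x$. The Hessian is continuous (indeed Lipschitz), so $t\mapsto \nabla f(x_t)$ is $C^1$ with derivative $\nabla^2 f(x_t)h$. This gives
\begin{equation*}
\nabla f(y)-\nabla f(x)-\nabla^2 f(x)h=\int_0^1\big(\nabla^2 f(x_t)-\nabla^2 f(x)\big)h\,dt .
\end{equation*}
Taking norms and using $\|\nabla^2 f(x_t)-\nabla^2 f(x)\|_{\op}\le L_H t\|h\|$, we get the bound $\int_0^1 L_H t\|h\|^2\,dt=\tfrac{L_H}{2}\|h\|^2$, which is \eqref{eq:gradient-taylor}.

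For \eqref{eq:function-taylor}, consider the scalar function $\varphi(t)\coloneqq f(x_t)$. We have $\varphi'(t)=\inner{\nabla f(x_t)}{h}$, and therefore
\begin{equation*}
f(y)-f(x)-\inner{\nabla f(x)}{h}-\tfrac12\inner{\nabla^2 f(x)h}{h}
=\int_0^1\inner{\nabla f(x_t)-\nabla f(x)-t\nabla^2 f(x)h}{h}\,dt ,
\end{equation*}
using $\int_0^1 t\,dt=\tfrac12$. The vector inside the inner product is exactly the gradient remainder from the first part, taken between $x$ and $x_t$, whose displacement is $th$. By \eqref{eq:gradient-taylor} and Cauchy--Schwarz, the integrand is bounded in absolute value by $\tfrac{L_H}{2}t^2\|h\|^3$. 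Integrating gives $\tfrac{L_H}{6}\|h\|^3$.

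There is no substantive obstacle here; this is the standard argument. The only point requiring care is justifying the differentiations: the vector-valued integral identity needs $\nabla f$ to be continuously differentiable along the segment, and this follows from the continuity of $\nabla^2 f$ implied by \Cref{ass:hessian}. The vector integral bound $\|\int v\|\le\int\|v\|$ is standard.
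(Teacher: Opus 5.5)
Your proof is correct and is the same standard integral-remainder argument as Lemma~1.2.4 of Nesterov's \emph{Lectures on Convex Optimization}, which the paper cites in place of giving its own proof. In that argument, the gradient remainder is written as $\int_0^1(\nabla^2 f(x+t(y-x))-\nabla^2 f(x))(y-x)\,dt$ to get the $L_H/2$ bound, and then the gradient remainder between $x$ and $x+t(y-x)$ is integrated against $y-x$ to get the $L_H/6$ bound, exactly as you did.
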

The proofs can be found in Lemma 1.2.4 of \cite{Nesterov2018Lectures}. To develop an accelerated second-order method, we also require the third derivative to be Lipschitz continuous.

\begin{assumption}\label{ass:third}
The third derivative of the objective function $f$ is $L_T$-Lipschitz continuous, i.e., $\|\nabla^3 f(x)-\nabla^3 f(y)\|_{\op}\le L_T\|x-y\|$ for all $x,y\in\bR^d$.
\end{assumption}

\begin{lemma}\label{lem:lipschitz-interpolation}
Under \Cref{ass:third}, for all $x,y\in\bR^d$, we have
\begin{align}
\|\nabla^2 f(y)-\nabla^2 f(x)-\nabla^3 f(x)[y-x,\bullet,\bullet]\|_{\op}
&\le\tfrac{L_T}{2}\|y-x\|^2,\label{eq:taylor}\\
\big|f(y)-f(x)-\inner{\nabla f(x)}{y-x}
-\tfrac{1}{2}\inner{\nabla^2 f(x)(y-x)}{y-x}-\tfrac{1}{6}\nabla^3 f(x)[y-x]^{\otimes 3}\big|&\le\tfrac{L_T}{20}\|y-x\|^4.\label{eq:tensor-function-taylor}
\end{align}
\end{lemma}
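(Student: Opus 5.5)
Both bounds will come from integral forms of Taylor's theorem along the segment $x+t(y-x)$, $t\in[0,1]$, with the remainders controlled by \Cref{ass:third}. Throughout, write $h=y-x$ and $x_t=x+th$.

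For \eqref{eq:taylor}, apply the fundamental theorem of calculus to the matrix-valued map $t\mapsto\nabla^2 f(x_t)$:
\[
\nabla^2 f(y)-\nabla^2 f(x)=\int_0^1\nabla^3 f(x_t)[h,\bullet,\bullet]\,dt .
\]
Subtracting $\nabla^3 f(x)[h,\bullet,\bullet]$ leaves $\int_0^1(\nabla^3 f(x_t)-\nabla^3 f(x))[h,\bullet,\bullet]\,dt$. For each $t$, the operator norm of the integrand is at most $\|\nabla^3 f(x_t)-\nabla^3 f(x)\|_{\op}\|h\|\le L_T t\|h\|^2$. This uses the definition of the tensor operator norm as a supremum over unit vectors: for a symmetric matrix, $\|A\|_{\op}=\sup_{\|v\|,\|w\|\le 1}|A[v,w]|$. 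Integrating $t$ gives the factor $\tfrac12$, which proves \eqref{eq:taylor}. This step is routine.

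For \eqref{eq:tensor-function-taylor}, set $\varphi(t)=f(x_t)$, so that $\varphi^{(j)}(t)=\nabla^j f(x_t)[h]^{\otimes j}$ for $j\le 3$. By \Cref{ass:third}, $\varphi'''$ is Lipschitz with constant $L_T\|h\|^4$. Taylor's formula with integral remainder at order three gives
\[
\varphi(1)-\varphi(0)-\varphi'(0)-\tfrac12\varphi''(0)-\tfrac16\varphi'''(0)=\int_0^1\tfrac{(1-t)^2}{2}\bigl(\varphi'''(t)-\varphi'''(0)\bigr)\,dt .
\]
The left-hand side is exactly the quantity in \eqref{eq:tensor-function-taylor}. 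Bounding $|\varphi'''(t)-\varphi'''(0)|\le L_T t\|h\|^4$, the remainder is at most $L_T\|h\|^4\int_0^1\tfrac{t(1-t)^2}{2}\,dt=\tfrac{L_T}{24}\|h\|^4$.

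The main obstacle is the constant, since $\tfrac1{24}\le\tfrac1{20}$ gives the stated bound only with room to spare. The stated $\tfrac{1}{20}$ is therefore weaker than what this argument yields, so no sharper bookkeeping is needed. If the intended route is different, for example first integrating \eqref{eq:taylor} twice to bound the gradient remainder by $\tfrac{L_T}{6}\|h\|^3$, then the function remainder by $\tfrac{L_T}{24}\|h\|^4$, the result is the same constant and still implies the claim. I would cite Lemma 1.2.4 of \cite{Nesterov2018Lectures} for the analogous lower-order statement and note that the proof transfers verbatim.
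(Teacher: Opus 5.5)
Your proof is correct and follows the standard route: the paper does not prove this lemma itself but cites \cite{cartisgouldtoint2020second}, where these bounds come from exactly this integral-remainder Taylor argument along the segment $x+t(y-x)$. Your computation $\int_0^1 \tfrac{t(1-t)^2}{2}\,dt=\tfrac{1}{24}$ is right, so you actually obtain the sharper constant $\tfrac{L_T}{24}$, which implies the stated $\tfrac{L_T}{20}$.
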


The above lemma is classical in the complexity analysis of tensor methods, and its proof can be found in \cite{cartisgouldtoint2020second}. The following lemma provides tensor-free inequalities for the objective function value and the Hessian, which are key to designing accelerated second-order methods that attain, up to logarithmic factors, the same complexity guarantee as tensor methods without using tensors.

\begin{lemma}\label{lem:tensor-free}
Under \Cref{ass:third}, for all $x,y\in\bR^d$, we have
\begin{equation}\label{eq:tensor-free-function-taylor}
    \big|f(y)-f(x)-\tfrac{1}{2}\inner{\nabla f(x)+\nabla f(y)}{y-x}+\tfrac1{12}\inner{(\nabla^2 f(y)-\nabla^2 f(x))(y-x)}{y-x}\big|\le \tfrac{L_T}{192}\|y-x\|^4.
\end{equation}
Moreover, for $0\le \tau\le 1$, we also have
\begin{equation}\label{eq:chord} 
\|\nabla^2 f((1-\tau)x+\tau y)-(1-\tau)\nabla^2 f(x)-\tau\nabla^2 f(y)\|_{\op}\le\tfrac{L_T}{2}\tau(1-\tau)\|y-x\|^2.
\end{equation}
\end{lemma}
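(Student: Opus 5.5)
Both inequalities reduce to one-dimensional statements along the segment. Set $h=y-x$ and $\varphi(t)=f(x+th)$ for $t\in[0,1]$. Then $\varphi'(t)=\inner{\nabla f(x+th)}{h}$, $\varphi''(t)=\inner{\nabla^2 f(x+th)h}{h}$, and $\varphi'''(t)=\nabla^3 f(x+th)[h]^{\otimes 3}$. By \Cref{ass:third}, $\varphi'''$ is Lipschitz with constant $L_T\|h\|^4$.

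\emph{First inequality \eqref{eq:tensor-free-function-taylor}.} In terms of $\varphi$, the quantity inside the absolute value is
\[
E\coloneqq\varphi(1)-\varphi(0)-\tfrac12(\varphi'(0)+\varphi'(1))+\tfrac1{12}(\varphi''(1)-\varphi''(0)).
\]
This is the error of the two-point Hermite (corrected trapezoid) quadrature rule $\int_0^1\psi\approx\tfrac12(\psi(0)+\psi(1))-\tfrac1{12}(\psi'(1)-\psi'(0))$ applied to $\psi=\varphi'$. The rule is exact for polynomials $\psi$ of degree $\le 3$, i.e.\ for cubic $\varphi'$, so $E$ vanishes whenever $\varphi$ is a polynomial of degree $\le 4$.

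The plan is to use only Lipschitz continuity of $\varphi'''$, not a fourth derivative. Write $\varphi'''(t)=\varphi'''(1/2)+r(t)$ with $|r(t)|\le L_T\|h\|^4|t-\tfrac12|$. Express $E$ through a Peano kernel against $\varphi'''$: integrating by parts gives $E=\int_0^1 K(t)\varphi'''(t)\,dt$ with a polynomial kernel $K$. Exactness on quadratics forces $\int_0^1 K=0$, so the constant part $\varphi'''(1/2)$ drops out and
\[
|E|\le L_T\|h\|^4\int_0^1|K(t)|\,|t-\tfrac12|\,dt .
\]
Symmetry of the rule about $t=\tfrac12$ should make $K$ a symmetric quadratic. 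I expect $K(t)=\tfrac1{12}\bigl(-t(1-t)+\tfrac16\bigr)$ up to sign; this is the candidate that integrates to zero, and it should be fixed by integrating by parts carefully. The remaining step is to evaluate the last integral and check that it is at most $1/192$. This constant computation is the main obstacle.

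If the kernel bound gives a worse constant, I would try pairing against $r$ more cleverly. Since $K$ is symmetric about $\tfrac12$, one can subtract any odd function about $\tfrac12$ from $r$, for example the linear part. Alternatively, mollify $f$ to obtain a $C^4$ function with $|\varphi''''|\le L_T\|h\|^4$, and use the classical error formula for this rule, $\tfrac{1}{720}\psi'''(\xi)$, which gives $1/720\le1/192$. The mollification route is the cleaner fallback: it needs only that smoothing preserves the Lipschitz constant of $\nabla^3 f$ and converges uniformly on compact sets.

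\emph{Second inequality \eqref{eq:chord}.} Set $G(\tau)=\nabla^2 f(x+\tau h)$. Its derivative is $G'(\tau)=\nabla^3 f(x+\tau h)[h,\bullet,\bullet]$, which is Lipschitz in $\tau$ with constant $L_T\|h\|^2$ in operator norm. Apply \eqref{eq:taylor} from the point $z=x+\tau h$ toward $x$ and toward $y$:
\begin{align*}
\|\nabla^2 f(x)-G(\tau)+\tau\nabla^3 f(z)[h,\bullet,\bullet]\|_{\op}&\le\tfrac{L_T}{2}\tau^2\|h\|^2,\\
\|\nabla^2 f(y)-G(\tau)-(1-\tau)\nabla^3 f(z)[h,\bullet,\bullet]\|_{\op}&\le\tfrac{L_T}{2}(1-\tau)^2\|h\|^2.
\end{align*}
Multiply the first by $(1-\tau)$ and the second by $\tau$, then add. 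The tensor terms cancel, and the triangle inequality gives the bound $\tfrac{L_T}{2}\|h\|^2\,\tau(1-\tau)(\tau+1-\tau)=\tfrac{L_T}{2}\tau(1-\tau)\|h\|^2$, as required. This part is routine.
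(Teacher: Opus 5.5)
Your proof of \eqref{eq:chord} is the same as the paper's and is correct. The gap is in \eqref{eq:tensor-free-function-taylor}, at the step you flagged, and careful computation does not close it.

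\textbf{The envelope bound gives the wrong constant.} The kernel is $K(t)=\frac{1-6t+6t^2}{12}=\frac12\bigl(t^2-t+\frac16\bigr)$, which is six times your guess. You can check this with $\varphi(t)=t^5$: there $E=\frac16=\int_0^1K\cdot 60t^2\,dt$. With your guessed kernel the computation would appear to succeed, falsely. With the correct $K$, write $u=t-\frac12$, so that $K=\frac12(u^2-\frac1{12})$. Your estimate then gives
\[
\int_0^1|K(t)|\,\bigl|t-\tfrac12\bigr|\,dt=\int_0^{1/2}\bigl|u^2-\tfrac1{12}\bigr|\,u\,du=\tfrac{5}{576}>\tfrac{3}{576}=\tfrac1{192}.
\]
Rearranging the same information cannot fix this. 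The envelope $|r(t)|\le L_T\|h\|^4|t-\frac12|$ is saturated by the discontinuous function $L_T\|h\|^4|t-\frac12|\,\mathrm{sign}\,K(t)$. Hence any argument that uses only this envelope is stuck at $5/576$. Removing the odd part of $r$ changes nothing, because the even part obeys the same envelope.

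\textbf{The mollification fallback misquotes the error formula.} The corrected trapezoid error is $\frac1{720}\psi^{(4)}(\xi)=\frac1{720}\varphi^{(5)}(\xi)$, and \Cref{ass:third} does not control $\varphi^{(5)}$. In fact $1/192$ is sharp. Take $d=1$, $f(s)=\frac{L_T}{24}(s-\frac12)^3|s-\frac12|$ (so $f'''(s)=L_T|s-\frac12|$), $x=0$ and $y=1$. Then $E=L_T/192$ exactly, so no valid argument can yield $1/720$.

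\textbf{The missing idea.} You need to use the Lipschitz continuity of $\varphi'''$ between points where $K$ has opposite signs, not only relative to $t=\frac12$.

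The paper folds the integral onto $[0,\frac12]$ with $G(\tau)=\varphi'''(\tau)+\varphi'''(1-\tau)$. Using $\int_0^{1/2}K=0$, it writes
\[
\int_0^{1/2}KG=\frac12\iint_{[0,1/2]^2}(\tau-s)(\tau+s-1)\bigl(G(\tau)-G(s)\bigr)\,ds\,d\tau .
\]
Since $|G(\tau)-G(s)|\le2L_T\|h\|^4|\tau-s|$, this is bounded by $L_T\|h\|^4\iint_{[0,1/2]^2}(\tau-s)^2(1-\tau-s)\,ds\,d\tau=L_T\|h\|^4/192$.

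A route closer to your plan is to integrate by parts once more. Since $\varphi'''$ is Lipschitz, it is absolutely continuous with $|\varphi''''|\le L_T\|h\|^4$ a.e., so no mollification is needed. The function $\Psi(t)=\int_0^tK=\frac1{12}t(1-t)(1-2t)$ vanishes at $t=0$ and $t=1$. Therefore $E=-\int_0^1\Psi\varphi''''$, and $|E|\le L_T\|h\|^4\int_0^1|\Psi|=L_T\|h\|^4/192$.
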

The above inequalities are classical results in numerical analysis; see, for example, \cite{lampret2004invitation,bui2026elementary}. To the best of our knowledge, using these inequalities to analyze the convergence of optimization methods appears to be new. For completeness, we provide the proofs in \Cref{app:hermite}.

\paragraph{Organization.} The remainder of this paper is organized as follows. In \Cref{sec:framework}, we introduce the proximal point Newton framework and show that a single-loop method achieves the same complexity guarantee as the cubic regularized Newton method. Building on this framework, \Cref{sec:accelerated} develops an
accelerated proximal point Newton method that exploits third-derivative smoothness using only gradients and Hessians, yielding improved complexity guarantees. In \Cref{sec:binary-cg}, we further develop an adaptive Newton--CG method based on capped CG and the Lanczos method,
which requires no prior knowledge of Lipschitz constants, and establish its Hessian--vector product complexity. Finally, \Cref{sec:numerical} presents numerical experiments comparing our method with existing second-order methods.
 
\section{The Proximal Point Newton Framework}\label{sec:framework}
In this section, we first provide insight into the design of our proximal point Newton framework. As a direct application of the framework, we then show that a single-loop proximal point Newton method achieves the same complexity guarantee as the cubic regularized Newton method under Hessian Lipschitz continuity.

\subsection{The framework design}
Since the objective function in \eqref{eq:problem} is potentially nonconvex, an unregularized Newton step need not be well defined due to indefiniteness or singularity. Thus, at iteration $k$, we consider the following proximal point subproblem:
\begin{equation*}
    \min_{x\in\bR^d} \ \Phi_k(x) \coloneqq f(x)+\mu\|x-x^k\|^2,
\end{equation*}
where $\mu > 0$ is a predefined regularization parameter. The function $\Phi_k(\cdot)$ is referred to as the \textit{proximal function}. Since $f$ is bounded below, the proximal function is coercive and admits a minimizer. Nevertheless, $\Phi_k$ may remain nonconvex, and the quadratic regularization alone does not guarantee that its Newton system is well defined at every iterate. Therefore, we need to perform a \textit{negative-curvature detection} before computing the Newton direction of the proximal function. We split this into two branches: if the negative-curvature detection returns sufficient negative curvature of $f(\cdot)$, then we employ the negative-curvature direction as our descent direction; otherwise, $\nabla^2\Phi_k$ is positive definite, and the Newton direction of the proximal function is used as our update direction. For the proximal function, we have
\begin{equation*}
\textstyle \nabla\Phi_k(x) =\nabla f(x)+2\mu(x-x^k), \ \text{and} \ \nabla^2\Phi_k(x)=\nabla^2 f(x)+2\mu I.
\end{equation*}
Whenever $\nabla^2\Phi_k(x^k)\succ0$, we define the local norm as
\begin{equation*}
    \|u\|_{k,\mu}^2 \coloneqq \inner{\nabla^2\Phi_k(x^k)u}{u}=\inner{[\nabla^2 f(x^k)+2\mu I]u}{u}, \ \forall u \in \bR^d.
\end{equation*}

As we will show in our analysis, using a negative-curvature direction is a favorable case, as it always yields a sufficient decrease in the objective function value $f(\cdot)$. The main analysis therefore focuses on the Newton branch. In this case, we seek to minimize the proximal function, and hence an inner loop is required. For clarity, the outer iteration index is $k\ge0$, and the inner iteration index is $t\ge0$, with $x^{k,0} \coloneqq x^k$. For a fixed outer iterate $x^k$, all inner iterates $x^{k,t}$ are associated with the same proximal function $\Phi_k$. A sufficient stopping criterion for an inner iterate $x^{k,t}$ is
\begin{equation*}
\textstyle \|\nabla\Phi_k(x^{k,t})\|\le\frac{\epsg}{2}, \ 2\mu\|x^{k,t}-x^k\|\le\frac{\epsg}{2}, \ \lambda_{\min}(\nabla^2 f(x^{k,t})) > -\epsH.
\end{equation*}
The first two inequalities control the proximal residual and its bias, respectively. Let $\Delta > 0$ be a fixed radius. We apply the fixed-radius analysis strategy in the Newton branch \citep{ye2005second,curtis2018concise,zhang2026homogeneous}. If the magnitude of the Newton direction is larger than the radius $\Delta$, which we refer to as the \textit{large-step} case, we can show that the function value decreases sufficiently on the order of $\Delta^2$. Otherwise, we are in the \textit{small-step} case, where a full Newton step is taken. We can then show that $\|\nabla\Phi_k(x^{k,t})\|$ decreases geometrically.

\subsection{Single-loop proximal point Newton method}
In this subsection, we assume only that \Cref{ass:hessian} holds. Surprisingly, we show that the proximal point subproblem requires at most one Newton update, and hence the proximal point framework motivates a single-loop regularized Newton method. Fix an outer iteration $k$. We first perform negative-curvature detection at $x^k$. If $\lambda_{\min}(\nabla^2 f(x^k)) \le -\mu$, then we choose the negative-curvature direction as
\begin{equation*}
    \textstyle v^{k}\in\argmin_{\|v\|=1}\inner{\nabla^2 f(x^k)v}{v}.
\end{equation*}
Define the cubic regularized Newton model at iterate $x^k$ by
\begin{equation*}
    \textstyle M_k(x) \coloneqq \inner{\nabla f(x^k)}{x-x^k}+\tfrac{1}{2}\inner{\nabla^2 f(x^k)(x-x^k)}{x-x^k}+\tfrac{L_H}{6}\|x-x^k\|^3.
\end{equation*}
We then select the next iterate by comparing the third-order subproblem values at the candidate points $\{x^k+\mu v^k/L_H, x^k-\mu v^k/L_H\}$. Specifically, the next iterate $x^{k+1}$ is chosen as
\begin{equation}\label{eq:negative-curvature-next iterate}
    \textstyle x^{k+1} \in \argmin_{x \in \{x^k \pm \mu v^k/L_H\}} M_k(x)
\end{equation}
such that the third-order subproblem $M_k(\cdot)$ attains the smallest value. If $\lambda_{\min}(\nabla^2 f(x^k)) > -\mu$, then $\nabla^2 \Phi_k(x^k)$ is positive definite. We compute the Newton direction
\begin{equation}\label{eq:newton-direction}
    \textstyle d^k \coloneqq \nabla^2\Phi_k(x^k)^{-1}\nabla\Phi_k(x^k).
\end{equation}
If $\anorm{d^k}\ge\Delta$, we are in the \textit{large-step case}: set $x^{k+1}=x^k-\Delta d^k/\anorm{d^k}$, so that $\anorm{x^{k+1}-x^k}=\Delta$. We can show that the function value decreases sufficiently. Otherwise, we are in the \textit{small-step case}: take the full Newton step $x^{k+1}=x^k-d^k$, and we can show that the next iterate is already an $(\epsg,\epsH)$-approximate second-order stationary point. The complete algorithm is presented in \Cref{alg:ppnewton}.

\begin{algorithm}[h]
\caption{Proximal Point Newton Method}\label{alg:ppnewton}
\begin{algorithmic}[1]
\Require Hessian Lipschitz constant $L_H>0$, accuracies $\epsg>0$ and $\epsH>0$.
\State \textbf{Initialization:} initial point $x^0\in\bR^d$, regularization parameter $\mu > 0$, radius $\Delta > 0$.
\For{$k=0,1,2,\ldots$}
  \If{$\lambda_{\min}(\nabla^2 f(x^k)) \le -\mu$}
    \State Choose the negative-curvature direction $v^{k}\in\argmin_{\|v\|=1}\inner{\nabla^2 f(x^k)v}{v}$.
    \State Update the iterate $x^{k+1}$ by \eqref{eq:negative-curvature-next iterate}.
  \Else
    \State Define the proximal function $\Phi_k(\cdot)$ at iterate $x^k$.
    \State Compute the Newton direction $d^k$ by \eqref{eq:newton-direction}.
    \State Update the iterate $x^{k+1}$
    \If{$\anorm{d^k}\ge\Delta$}
      \State Update $x^{k+1}=x^k-\Delta d^k/\anorm{d^k}$.
    \Else
      \State Update $x^{k+1}=x^k-d^k$.
    \EndIf
  \EndIf
\EndFor
\end{algorithmic}
\end{algorithm}

\begin{lemma}\label{lem:negative-curvature-decrease}
Under \Cref{ass:hessian}, if $\lambda_{\min}(\nabla^2f(x^k))\le-\mu$, then we have
\begin{align*}
  f(x^{k+1})-f(x^k)\le-\frac{\mu^3}{3L_H^2}. 
\end{align*}
\end{lemma}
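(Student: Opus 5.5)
The plan is to combine the cubic upper bound from \Cref{lem:lipschitz-taylor} with the fact that $x^{k+1}$ minimizes the cubic model $M_k$ over the two candidate points. By \eqref{eq:function-taylor}, for every $x$ we have $f(x)-f(x^k)\le M_k(x)$, since $M_k$ is the second-order Taylor expansion plus the upper error term $\tfrac{L_H}{6}\|x-x^k\|^3$. It therefore suffices to show that
\[
\min_{\sigma\in\{\pm1\}} M_k\bigl(x^k+\sigma\mu v^k/L_H\bigr)\le-\tfrac{\mu^3}{3L_H^2}.
\]

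Write $s=\mu/L_H$ and $x_\sigma=x^k+\sigma s v^k$. Since $\|v^k\|=1$ and $v^k$ attains the minimal Rayleigh quotient, we have $\inner{\nabla^2 f(x^k)v^k}{v^k}=\lambda_{\min}(\nabla^2 f(x^k))\le-\mu$. Hence
\[
M_k(x_\sigma)=\sigma s\inner{\nabla f(x^k)}{v^k}+\tfrac{s^2}{2}\lambda_{\min}(\nabla^2 f(x^k))+\tfrac{L_H}{6}s^3 .
\]
The linear term is the only one that depends on the sign, and it changes sign with $\sigma$. So the minimum over $\sigma$ makes this term equal to $-s|\inner{\nabla f(x^k)}{v^k}|\le0$; equivalently, averaging the two candidates already removes it.

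This gives
\[
\min_\sigma M_k(x_\sigma)\le -\tfrac{\mu s^2}{2}+\tfrac{L_H s^3}{6}=-\tfrac{\mu^3}{2L_H^2}+\tfrac{\mu^3}{6L_H^2}=-\tfrac{\mu^3}{3L_H^2},
\]
and combining with the first step yields the claim.

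There is no real obstacle. The only points needing care are the direction of the inequality in \eqref{eq:function-taylor}, of which only the upper bound is used, and the observation that choosing the sign by minimizing $M_k$ is at least as good as choosing the sign that makes $\inner{\nabla f(x^k)}{\sigma v^k}\le0$.
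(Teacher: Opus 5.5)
Your proof is correct and follows essentially the same route as the paper. Both bound $f(x^{k+1})-f(x^k)$ by $M_k(x^{k+1})$ via \eqref{eq:function-taylor}, remove the sign-dependent linear term (the paper averages the two candidates, which you mention as equivalent), and use $\inner{\nabla^2 f(x^k)v^k}{v^k}\le-\mu$ to obtain $-\frac{\mu^3}{2L_H^2}+\frac{\mu^3}{6L_H^2}=-\frac{\mu^3}{3L_H^2}$.
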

\begin{proof}
By the update $x^{k+1} \in \argmin_{x \in \{x^k+\mu v^k/L_H, x^k-\mu v^k/L_H\}} M_k(x)$, we have
\begin{align*}
    \textstyle M_k(x^{k+1}) \le \min \big\{M_k(x^k+\mu v^k/L_H), M_k(x^k-\mu v^k/L_H)\big\} \le \frac{1}{2}\big(M_k(x^k+\mu v^k/L_H)+M_k(x^k-\mu v^k/L_H)\big),
\end{align*}
which implies that
\begin{align*}
    \textstyle M_k(x^{k+1}) \le \frac{\mu^2}{2L_H^2}\inner{\nabla^2f(x^k)v^k}{v^k}+\frac{\mu^3}{6L_H^2} \le -\frac{\mu^3}{3L_H^2},
\end{align*}
where the last inequality follows from $\inner{\nabla^2f(x^k)v^k}{v^k}=\lambda_{\min}(\nabla^2f(x^k))\le-\mu$. By \eqref{eq:function-taylor}, we then have
\begin{align*}
    \textstyle f(x^{k+1}) \le f(x^k) + M_k(x^{k+1}) \le f(x^k) - \frac{\mu^3}{3L_H^2},
\end{align*}
which completes the proof.
\end{proof}

\begin{lemma}[Large-step case]\label{lem:newton-decrease}
Under \Cref{ass:hessian}, suppose $\lambda_{\min}(\nabla^2f(x^k))>-\mu$ and $16L_H\Delta\le\mu^{3/2}$. In the large-step case $\anorm{d^k}\ge\Delta$, we have
\begin{equation*}
    \textstyle f(x^{k+1})-f(x^k)\le-\frac{\Delta^2}{2}.
\end{equation*}
\end{lemma}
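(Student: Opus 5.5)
The plan is to take one step of the second-order Taylor bound \eqref{eq:function-taylor} from \Cref{lem:lipschitz-taylor} and to write every term in the local norm $\anorm{\cdot}$. Throughout I use the abbreviations $g\coloneqq\nabla f(x^k)=\nabla\Phi_k(x^k)$, $H\coloneqq\nabla^2 f(x^k)$ and $A\coloneqq\nabla^2\Phi_k(x^k)=H+2\mu I$. Since $\lambda_{\min}(H)>-\mu$, we have $A\succ\mu I$. Hence $\anorm{u}^2\ge\mu\|u\|^2$ for every $u$, and $d^k=A^{-1}g$ is well defined. The step is $s\coloneqq x^{k+1}-x^k=-\alpha d^k$ with $\alpha\coloneqq\Delta/\anorm{d^k}\in(0,1]$, so that $\anorm{s}=\Delta$.

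\textbf{Step 1: the linear term.} Because $g=Ad^k$,
\[
\inner{g}{s}=-\alpha\inner{Ad^k}{d^k}=-\alpha\anorm{d^k}^2=-\Delta\anorm{d^k}.
\]
The large-step condition $\anorm{d^k}\ge\Delta$ then gives $\inner{g}{s}\le-\Delta^2$.

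\textbf{Step 2: the quadratic term.} Writing $H=A-2\mu I$,
\[
\tfrac12\inner{Hs}{s}=\tfrac12\anorm{s}^2-\mu\|s\|^2=\tfrac{\Delta^2}{2}-\mu\|s\|^2.
\]

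\textbf{Step 3: the cubic remainder.} This is the only place the hypothesis $16L_H\Delta\le\mu^{3/2}$ enters. From $\mu\|s\|^2\le\anorm{s}^2=\Delta^2$ we get $\|s\|\le\Delta/\sqrt{\mu}$. Therefore
\[
\tfrac{L_H}{6}\|s\|^3\le\tfrac{L_H\Delta}{6\sqrt{\mu}}\,\|s\|^2\le\tfrac{\mu}{96}\|s\|^2\le\mu\|s\|^2,
\]
so the remainder is absorbed by the term $-\mu\|s\|^2$ from Step 2.

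\textbf{Conclusion.} Inserting Steps 1--3 into \eqref{eq:function-taylor} gives
\[
f(x^{k+1})-f(x^k)\le-\Delta^2+\tfrac{\Delta^2}{2}-\mu\|s\|^2+\tfrac{L_H}{6}\|s\|^3\le-\tfrac{\Delta^2}{2}.
\]
There is no real obstacle in this argument. The one point requiring care is Step 3: one must bound the Euclidean length $\|s\|$ through the local norm using $A\succeq\mu I$, and then see that the cubic term is dominated by the negative part of the quadratic term. In fact the hypothesis leaves ample slack, since a factor $1/96$ appears where $1$ would suffice.
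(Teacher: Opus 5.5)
Your proof is correct and follows essentially the same route as the paper's. You apply \eqref{eq:function-taylor}, use the Newton equation to get $\inner{\nabla\Phi_k(x^k)}{x^{k+1}-x^k}=-\Delta\anorm{d^k}\le-\Delta^2$, rewrite the quadratic term as $\tfrac12\anorm{x^{k+1}-x^k}^2-\mu\|x^{k+1}-x^k\|^2$, and absorb the cubic remainder into $-\mu\|x^{k+1}-x^k\|^2$ via $\sqrt\mu\|x^{k+1}-x^k\|\le\Delta$ and $16L_H\Delta\le\mu^{3/2}$; your remark that this lemma only needs $L_H\Delta\le6\mu^{3/2}$ is also accurate.
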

\begin{proof}
By applying \eqref{eq:function-taylor} and the definition of the proximal function, we have
\begin{align*}
    \textstyle f(x^{k+1})-f(x^k)\le \inner{\nabla\Phi_k(x^k)}{x^{k+1}-x^k}+\frac12\anorm{x^{k+1}-x^k}^2-\mu\|x^{k+1}-x^k\|^2+\frac{L_H}{6}\|x^{k+1}-x^k\|^3.
\end{align*}
Since $\nabla^2\Phi_k(x^k)\succ\mu I$, it follows that
\begin{align*}
    \textstyle \sqrt\mu\|x^{k+1}-x^k\|\le\anorm{x^{k+1}-x^k}
    =\min\{\anorm{d^k},\Delta\}\le\Delta.
\end{align*}
The Newton equation gives
\begin{align*}
\textstyle \inner{\nabla\Phi_k(x^k)}{x^{k+1}-x^k}=-\frac{\Delta}{\|d^k\|_{k,\mu}}\inner{\nabla^2\Phi_k(x^k)d^k}{d^k} =-\Delta\|d^k\|_{k,\mu} \le -\Delta^2= -\|x^{k+1}-x^k\|_{k,\mu}^2.
\end{align*}
which implies that
\begin{equation*}
    \textstyle f(x^{k+1})-f(x^k)\le-\frac12\anorm{x^{k+1}-x^k}^2 -\big(\mu-\frac{L_H\Delta}{6\sqrt\mu}\big)\|x^{k+1}-x^k\|^2.
\end{equation*}
The second term is nonnegative under the condition $16L_H\Delta\le\mu^{3/2}$. Since $\anorm{d^k}\ge\Delta$, we have $\anorm{x^{k+1}-x^k}=\Delta$, which completes the proof.
\end{proof}

\begin{lemma}[Small-step case]\label{lem:baseline-small-step}
Under \Cref{ass:hessian}, suppose $\lambda_{\min}(\nabla^2f(x^k))>-\mu$. In the small-step case $\anorm{d^k}<\Delta$, the full step $x^{k+1}=x^k-d^k$ satisfies
\begin{equation*}
    \textstyle \|\nabla f(x^{k+1})\|\le\frac{L_H\Delta^2}{2\mu}+2\sqrt\mu\Delta \ \text{and} \ \nabla^2f(x^{k+1})\succ-\big(\mu+\frac{L_H\Delta}{\sqrt\mu}\big)I.
\end{equation*}
\end{lemma}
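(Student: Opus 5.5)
The plan is to exploit the Newton equation $\nabla^2\Phi_k(x^k)d^k=\nabla\Phi_k(x^k)$, which is equivalent to $\nabla f(x^k)-\nabla^2 f(x^k)d^k = -2\mu d^k$. With $s\coloneqq x^{k+1}-x^k=-d^k$, this reads
\begin{equation*}
\nabla f(x^k)+\nabla^2 f(x^k)s = 2\mu d^k .
\end{equation*}
The first step is to convert the local-norm bound into a Euclidean one. Since $\lambda_{\min}(\nabla^2 f(x^k))>-\mu$, we have $\nabla^2\Phi_k(x^k)\succ\mu I$. Hence $\sqrt\mu\|d^k\|\le\anorm{d^k}<\Delta$, that is, $\|s\|<\Delta/\sqrt\mu$; this is the same argument used in \Cref{lem:newton-decrease}.

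For the gradient bound, I will write
\begin{equation*}
\nabla f(x^{k+1}) = \big[\nabla f(x^{k+1})-\nabla f(x^k)-\nabla^2 f(x^k)s\big] + 2\mu d^k
\end{equation*}
and apply the triangle inequality. For the first bracket, \eqref{eq:gradient-taylor} gives a bound of $\frac{L_H}{2}\|s\|^2\le \frac{L_H\Delta^2}{2\mu}$. The second term satisfies $2\mu\|d^k\|\le 2\mu\cdot\Delta/\sqrt\mu=2\sqrt\mu\Delta$. Summing the two gives exactly the claimed bound $\frac{L_H\Delta^2}{2\mu}+2\sqrt\mu\Delta$.

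For the curvature bound, I will combine \Cref{ass:hessian} with Weyl's inequality:
\begin{equation*}
\nabla^2 f(x^{k+1})\succeq \nabla^2 f(x^k) - L_H\|s\| I .
\end{equation*}
Using $\nabla^2 f(x^k)\succ -\mu I$ and $\|s\|<\Delta/\sqrt\mu$, this yields $\nabla^2 f(x^{k+1})\succ -\big(\mu+\frac{L_H\Delta}{\sqrt\mu}\big)I$. The strict inequality is inherited from the strict hypothesis on $\lambda_{\min}(\nabla^2 f(x^k))$.

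There is no real obstacle here. The only point requiring care is the bookkeeping: $d^k$ enters with a minus sign in the update, and the proximal term $2\mu(x-x^k)$ vanishes at $x^k$. It is this vanishing that leaves the residual $2\mu d^k$, rather than zero, in the expression for $\nabla f(x^{k+1})$.
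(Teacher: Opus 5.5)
Your proof is correct and is essentially the paper's argument. The paper bounds $\|\nabla f(x^{k+1})\|\le\|\nabla\Phi_k(x^{k+1})\|+2\mu\|x^{k+1}-x^k\|$ and applies \eqref{eq:gradient-taylor} to $\Phi_k$ through the Newton equation, which is your decomposition written in terms of $\Phi_k$ instead of $f$; its curvature bound is the same as yours.

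There is one harmless slip. The Newton equation gives $\nabla f(x^k)-\nabla^2 f(x^k)d^k=2\mu d^k$, not $-2\mu d^k$. Your displayed identity $\nabla f(x^k)+\nabla^2 f(x^k)s=2\mu d^k$ already has the correct sign, and only its norm is used.
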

\begin{proof}
First, note that $\|\nabla f(x^{k+1})\|\le\|\nabla\Phi_k(x^{k+1})\|+2\mu\|x^{k+1}-x^k\|$. Since $\nabla^2\Phi_k(x^k)\succ\mu I$ and we take the full Newton step $x^{k+1}=x^k-d^k$, we have
\begin{align*}
    \textstyle \sqrt\mu\|x^{k+1}-x^k\|\le\anorm{x^{k+1}-x^k}<\Delta \ \text{and} \ 
    \nabla\Phi_k(x^k)+\nabla^2\Phi_k(x^k)(x^{k+1}-x^k)=0.
\end{align*}
Consequently, \eqref{eq:gradient-taylor} gives
\begin{align*}
    \textstyle \|\nabla\Phi_k(x^{k+1})\|\le\frac{L_H}{2}\|x^{k+1}-x^k\|^2
    \le\frac{L_H\Delta^2}{2\mu},
\end{align*}
which implies that $\|\nabla f(x^{k+1})\|\le \frac{L_H\Delta^2}{2\mu}+2\sqrt\mu\Delta$.
Finally, Hessian Lipschitz continuity gives
\begin{align*}
    \textstyle \nabla^2f(x^{k+1})\succeq\nabla^2f(x^k)-L_H\|x^{k+1}-x^k\|I
    \succ-\big(\mu+\frac{L_H\Delta}{\sqrt\mu}\big)I,
\end{align*}
which completes the proof.
\end{proof}

\begin{theorem}\label{thm:baseline-complexity}
Under \Cref{ass:hessian}, choose $\mu=\min\{\sqrt{L_H\epsg},\epsH/2\}$ and $\Delta=\frac{\mu^{3/2}}{16L_H}$. Then \Cref{alg:ppnewton} returns an $(\epsg,\epsH)$-approximate second-order stationary point using at most
\begin{align*}
    \cO\big((f(x^0)-f_{\inf})\max\{\sqrt{L_H}\epsg^{-3/2},L_H^2\epsH^{-3}\}\big)
\end{align*}
second-order oracle evaluations. 
\end{theorem}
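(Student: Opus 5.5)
The plan is to combine the three preceding lemmas through a standard counting argument. Every iteration falls into exactly one of three cases: the negative-curvature branch, the Newton large-step case, or the Newton small-step case. I will show that the first two cases give a uniform decrease in $f$, and that the small-step case immediately produces an $(\epsg,\epsH)$-approximate second-order stationary point. The algorithm is then read as terminating at the first small-step iterate. Each iteration uses a single second-order oracle evaluation (gradient and Hessian at $x^k$), plus at most two more to compare candidates and check stationarity, so iterations and oracle calls agree up to absolute constants.

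\textbf{Step 1 (small step certifies stationarity).} With $\Delta=\mu^{3/2}/(16L_H)$ we have $L_H\Delta/\sqrt\mu=\mu/16$. \Cref{lem:baseline-small-step} then gives
\[
\nabla^2 f(x^{k+1})\succ-\tfrac{17}{16}\mu I\succeq -\tfrac{17}{32}\epsH I\succ-\epsH I,
\]
using $\mu\le\epsH/2$. For the gradient, the lemma gives
\[
\|\nabla f(x^{k+1})\|\le \tfrac{L_H\Delta^2}{2\mu}+2\sqrt\mu\,\Delta=\tfrac{\mu^2}{512L_H}+\tfrac{\mu^2}{8L_H}\le\tfrac{\mu^2}{L_H}\le\epsg,
\]
using $\mu\le\sqrt{L_H\epsg}$. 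So the small-step output is the desired point. This constant bookkeeping is the only place where the specific choices of $\mu$ and $\Delta$ matter, and it is the step to verify most carefully.

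\textbf{Step 2 (uniform decrease).} In the negative-curvature branch, \Cref{lem:negative-curvature-decrease} gives a decrease of at least $\mu^3/(3L_H^2)$. In the large-step case, the condition $16L_H\Delta\le\mu^{3/2}$ holds with equality, so \Cref{lem:newton-decrease} applies and gives a decrease of
\[
\tfrac{\Delta^2}{2}=\tfrac{\mu^3}{512L_H^2}.
\]
Both cases therefore decrease $f$ by at least $c\mu^3/L_H^2$ with $c=1/512$. One could also stop early if an iterate happens to be stationary, but this is not needed.

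\textbf{Step 3 (counting).} Summing the decreases and using $f\ge f_{\inf}$, the number of non-terminating iterations is at most $512L_H^2(f(x^0)-f_{\inf})/\mu^3$. It remains to evaluate $\mu^{-3}$. We have
\[
\mu^{-3}=\max\{(L_H\epsg)^{-3/2},\,8\epsH^{-3}\},
\]
which gives
\[
L_H^2\mu^{-3}=\cO\big(\max\{\sqrt{L_H}\,\epsg^{-3/2},\,L_H^2\epsH^{-3}\}\big).
\]
Adding one for the final small-step iteration yields the claimed bound. I expect no real obstacle beyond confirming that the constants in Step 1 close, which they do with room to spare.
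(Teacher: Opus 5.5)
Your proposal is correct and follows the paper's proof. Both decreasing branches give at least $\Delta^2/2=\mu^3/(512L_H^2)$, the small-step case yields an $(\epsg,\epsH)$-approximate second-order stationary point immediately, and telescoping bounds the iteration count by $2(f(x^0)-f_{\inf})/\Delta^2$. Your explicit check of the small-step constants, $\|\nabla f(x^{k+1})\|\le \mu^2/L_H\le\epsg$ and $\nabla^2 f(x^{k+1})\succ-\tfrac{17}{16}\mu I$, supplies the bookkeeping that the paper leaves implicit.
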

\begin{proof}
Under the choices of $\mu$ and $\Delta$, the negative-curvature direction update yields a decrease of at least $\Delta^2/2$ by \Cref{lem:negative-curvature-decrease}. \Cref{lem:newton-decrease} gives a decrease of at least $\Delta^2/2$ for the large-step Newton update. Otherwise, by \Cref{lem:baseline-small-step} and the choices of $\mu$ and $\Delta$, a full Newton update yields an $(\epsg,\epsH)$-approximate second-order stationary point after one update. After $K$ iterations, we have
\begin{align*}
    \textstyle \frac{K\Delta^2}{2}
    \le \sum_{k=0}^{K-1}(f(x^k)-f(x^{k+1}))
    = f(x^0)-f(x^K)
    \le f(x^0)-f_{\inf}.
\end{align*}
Therefore, after at most $\lfloor 2(f(x^0)-f_{\inf})/\Delta^2\rfloor$ iterations, an $(\epsg,\epsH)$-approximate second-order stationary point can be returned. The proof is complete.
\end{proof}

\section{Accelerated Proximal Point Newton Method}\label{sec:accelerated}
In this section, we assume that both \Cref{ass:hessian} and \Cref{ass:third} hold. We follow a construction similar to that of the single-loop proximal point Newton method: First detect negative curvature, and then either select a negative-curvature direction or take Newton steps for the proximal function. By exploiting the additional Lipschitz continuity of the third derivative, the fixed radius $\Delta$ can be chosen more aggressively, leading to a larger decrease in the function value and hence requiring fewer second-order oracle evaluations. From an algorithmic perspective, the main difference is that the algorithm in this section is no longer single-loop. We again separate the large-step and small-step cases when taking Newton steps: the former yields a sufficient decrease in the function value, while the latter yields a geometric decrease until an $(\epsg,\epsH)$-approximate second-order stationary point is returned or negative curvature is detected.

\subsection{Algorithm design}
Fix an outer iteration $k$. For negative-curvature detection, since an inner loop is required, merely checking $\lambda_{\min}(\nabla^2 f(x^k))>-\mu$ guarantees only that $\nabla^2 \Phi_k(\cdot)$ is positive definite at the outer iterate $x^k$. We also need to control the change in the proximal Hessian along the inner iterates. Since the third derivative of $f$ is $L_T$-Lipschitz continuous, we have
\begin{align*}
    \textstyle \|\nabla^2 f(x^{k,t})-\nabla^2 f(x^k)-\nabla^3 f(x^k)[x^{k,t}-x^k, \bullet, \bullet]\|_{\op} \le \frac{L_T}{2}\|x^{k,t}-x^k\|^2.
\end{align*}
Ideally, one could directly use the directional third-order curvature $\nabla^3 f(x^k)[x^{k,t}-x^k,\bullet,\bullet]$ to detect whether the Hessian changes too much along the inner loop, but such a test would require explicit access to the third derivative. To retain a second-order method, we instead use the Hessian difference $\nabla^2 f(x^{k,t})-\nabla^2 f(x^k)$, which motivates the following \textit{relative negative-curvature detection}
\begin{equation}\label{eq:gate}
    \min_{s\in\{-1,1\}}\lambda_{\min} \big(\nabla^2 f(x^k)+\mu I+4s(\nabla^2 f(x^{k,t})-\nabla^2 f(x^k))\big)\le0
\end{equation}
at an inner iterate $x^{k,t}$. At $t=0$, this condition is exactly the negative-curvature condition in \Cref{alg:ppnewton}. If \eqref{eq:gate} holds, choose the sign and direction
\begin{equation}\label{eq:certificate}
(s^{k,t},v^{k,t})\in\argmin_{s\in\{-1,1\}, \|v\|=1}
\inner{\big(\nabla^2 f(x^k)+\mu I+4s(\nabla^2 f(x^{k,t})-\nabla^2 f(x^k))\big)v}{v}.
\end{equation}
Define the tensor-free model at iterate $x^k$ by
\begin{equation*}
    \textstyle \Omega_k(x) \coloneqq \tfrac{1}{2}\inner{\nabla f(x^k)+\nabla f(x)}{x-x^k}-\tfrac1{12}\inner{(\nabla^2 f(x)-\nabla^2 f(x^k))(x-x^k)}{x-x^k}+\tfrac{L_T}{192}\|x-x^k\|^4.
\end{equation*}
By \eqref{eq:tensor-free-function-taylor}, we have $f(x)-f(x^k)\le \Omega_k(x)$. Select the next iterate by
\begin{equation}\label{eq:comparison}
 x^{k+1}\in
 \begin{cases}
    \argmin_{x\in\{x^k\pm\sqrt{\mu/L_T} v^{k,0}\}} \Omega_k(x),&t=0,\\
    \argmin_{\substack{x\in\{x^k-4s^{k,t}(x^{k,t}-x^k),x^k+4s^{k,t}(x^{k,t}-x^k)\pm\sqrt{\mu/L_T} v^{k,t}\}}}\Omega_k(x),&t\ge1.
 \end{cases}
\end{equation}
The above choice yields a sufficient decrease in the function value at the next iterate $x^{k+1}$.

If \eqref{eq:gate} does not hold, both choices $s=\pm1$ yield positive definite matrices, and it follows that
\begin{equation*}
    \textstyle -\frac{1}{4}\big(\nabla^2 f(x^k)+\mu I\big) \prec \nabla^2 f(x^{k,t})-\nabla^2 f(x^k) \prec \frac{1}{4}\big(\nabla^2 f(x^k)+\mu I\big).
\end{equation*}
Rearranging these inequalities gives
\begin{equation}\label{eq:relative-reserve}
    \textstyle \tfrac34\nabla^2\Phi_k(x^k)+\tfrac\mu4 I
    \prec\nabla^2\Phi_k(x^{k,t})
    \prec\tfrac54\nabla^2\Phi_k(x^k)-\tfrac\mu4 I,
\end{equation}
In particular, we have $\nabla^2\Phi_k(x^{k,t})\succ\mu I$ and $\nabla^2f(x^{k,t})\succ-\mu I$. We also have
\begin{equation}\label{eq:variation-norm-bound}
\begin{aligned}
    &\textstyle\|\nabla^2\Phi_k(x^k)^{-1/2}(\nabla^2\Phi_k(x^{k,t})-\nabla^2\Phi_k(x^k))\nabla^2\Phi_k(x^k)^{-1/2}\|_{\op}\le\tfrac14,\\
    &\textstyle\|\nabla^2\Phi_k(x^k)^{1/2}\nabla^2\Phi_k(x^{k,t})^{-1}\nabla^2\Phi_k(x^k)^{1/2}\|_{\op}\le\tfrac43.
\end{aligned}
\end{equation}
Therefore, the Newton direction at the inner iterate $x^{k,t}$ is well defined. We compute the Newton direction and the stepsize
\begin{equation}\label{eq:accelerated-direction}
    \textstyle d^{k,t}\coloneqq\nabla^2\Phi_k(x^{k,t})^{-1}\nabla\Phi_k(x^{k,t}), \ \eta_{k,t}\coloneqq\min\big\{1,\frac{\Delta}{\anorm{d^{k,t}}}\big\},
\end{equation}
and set $x^{k,t+1}=x^{k,t}-\eta_{k,t}d^{k,t}$. Similarly, if $\anorm{d^{k,0}}\ge\Delta$, i.e., the first Newton update reaches the boundary of the fixed radius, we are in the large-step case. We can then show that the function value decreases sufficiently at $x^{k,1}$ and set $x^{k+1}=x^{k,1}$. Otherwise, we are in the small-step case, with $\anorm{d^{k,0}}<\Delta$, and we take the full Newton step. We will show that all subsequent Newton steps in this inner loop remain small, so that full Newton steps are taken and geometric decrease is achieved until convergence or negative curvature is detected. The complete algorithm is presented in \Cref{alg:accppnewton}.

\begin{algorithm}[h]
\caption{Accelerated Proximal Point Newton Method}\label{alg:accppnewton}
\begin{algorithmic}[1]
\Require third derivative Lipschitz constant $L_T>0$, accuracies $\epsg>0$ and $\epsH>0$.
\State \textbf{Initialization:} initial point $x^0\in\bR^d$, regularization parameter $\mu > 0$, radius $\Delta > 0$.
\For{$k=0,1,2,\ldots$}
  \State Define the proximal function $\Phi_k(\cdot)$ at iterate $x^k$.
  \State Initialize $x^{k,0}=x^k$.
  \For{$t=0,1,2,\ldots$}
    \If{$\min_{s\in\{-1,1\}}\lambda_{\min}(\nabla^2 f(x^k)+\mu I+4s(\nabla^2 f(x^{k,t})-\nabla^2 f(x^k)))\le0$}
      \State Choose the negative-curvature sign and direction $(s^{k,t},v^{k,t})$ by \eqref{eq:certificate}.
      \State Update the iterate $x^{k+1}$ by \eqref{eq:comparison}; \textbf{break}.
    \Else
      \If{$t=1$ and $\anorm{x^{k,1}-x^k}=\Delta$}
        \State Update the iterate $x^{k+1}=x^{k,1}$; \textbf{break}.
      \Else
        \State Compute the Newton direction $d^{k,t}$ and the stepsize $\eta_{k,t}$ by \eqref{eq:accelerated-direction}.
        \State Update the iterate $x^{k,t+1}=x^{k,t}-\eta_{k,t}d^{k,t}$.
      \EndIf
    \EndIf
  \EndFor
\EndFor
\end{algorithmic}
\end{algorithm}

\subsection{Convergence analysis}
The following two lemmas show that the negative-curvature direction yields a sufficient decrease in the function value either at the outer iterate $x^k$ or at any inner iterate, as long as the condition \eqref{eq:gate} is satisfied.
\begin{lemma}\label{lem:decrease-initial}
Under \Cref{ass:third}, if \eqref{eq:gate} holds at $x^{k,0}=x^k$, then we have
\begin{equation*}
   f(x^{k+1})-f(x^k)\le-\frac{9\mu^2}{20L_T}. 
\end{equation*}
\end{lemma}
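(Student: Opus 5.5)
The plan is to reduce everything to one dimension along the negative-curvature direction and use the two-point comparison in \eqref{eq:comparison} exactly as in the proof of \Cref{lem:negative-curvature-decrease}. At $t=0$ the matrix in \eqref{eq:gate} is $\nabla^2 f(x^k)+\mu I$ for both signs. So \eqref{eq:gate} means $\inner{\nabla^2 f(x^k)v^{k,0}}{v^{k,0}}=\lambda_{\min}(\nabla^2 f(x^k))\le-\mu$.

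Set $h\coloneqq\sqrt{\mu/L_T}\,v^{k,0}$, so that $\|h\|^2=\mu/L_T$. By \eqref{eq:tensor-free-function-taylor} we have $f(x)-f(x^k)\le\Omega_k(x)$. Since $x^{k+1}$ minimizes $\Omega_k$ over $\{x^k\pm h\}$,
\begin{equation*}
 f(x^{k+1})-f(x^k)\le\Omega_k(x^{k+1})\le\tfrac12\big(\Omega_k(x^k+h)+\Omega_k(x^k-h)\big).
\end{equation*}
It therefore suffices to show that this average is at most $\tfrac12\inner{\nabla^2 f(x^k)h}{h}+\tfrac{L_T}{20}\|h\|^4$. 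This bound equals $-\tfrac{\mu^2}{2L_T}+\tfrac{\mu^2}{20L_T}=-\tfrac{9\mu^2}{20L_T}$.

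To bound the average, I would write $\varphi(s)\coloneqq f(x^k+sh)$ for $s\in[-1,1]$. Then
\begin{equation*}
 \Omega_k(x^k\pm h)=\pm\tfrac12(\varphi'(0)+\varphi'(\pm1))-\tfrac1{12}(\varphi''(\pm1)-\varphi''(0))+\tfrac{L_T}{192}\|h\|^4.
\end{equation*}
Summing the two values cancels $\varphi'(0)$ and gives
\begin{equation*}
 \tfrac12\big(\varphi'(1)-\varphi'(-1)\big)-\tfrac1{12}\big(\varphi''(1)+\varphi''(-1)-2\varphi''(0)\big)+\tfrac{L_T}{96}\|h\|^4 .
\end{equation*}
Expanding around $0$ with the integral (Peano) form of the remainder in $\varphi'''$, every odd-order term cancels, in particular the cubic $\varphi'''(0)$ contributions. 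What remains is $\varphi''(0)=\inner{\nabla^2 f(x^k)h}{h}$ plus integrals of $\varphi'''(s)-\varphi'''(-s)$ against explicit kernels. By \Cref{ass:third}, $|\varphi'''(s)-\varphi'''(-s)|\le 2L_T|s|\,\|h\|^4$. Integrating the kernels then gives a bound $C L_T\|h\|^4$ with an explicit constant $C$.

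The main obstacle is the constant. Bounding the two gradient and Hessian remainders separately with \eqref{eq:taylor} and the triangle inequality gives a constant near $1/8$, which is too weak for $9/20$. I would first try keeping the two differences together inside a single Peano-kernel integral, so that the kernel's sign changes produce cancellation. If this still does not reach $1/20$ for the average of $\Omega_k$, the fallback is to compare $f$ directly. By \eqref{eq:tensor-function-taylor}, the average of $f(x^k\pm h)-f(x^k)$ is at most $\tfrac12\varphi''(0)+\tfrac{L_T}{20}\|h\|^4$, because the linear and cubic Taylor terms cancel under $\pm h$. One then needs that minimizing $\Omega_k$ does not lose against this average. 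I expect this step, matching the selection rule to the stated constant $9/20$, to be where the real care is needed.
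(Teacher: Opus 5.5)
You follow the paper's route exactly: $f(x^{k+1})-f(x^k)\le\Omega_k(x^{k+1})\le\frac12\big(\Omega_k(x^k+h)+\Omega_k(x^k-h)\big)$, then $\inner{\nabla^2 f(x^k)v^{k,0}}{v^{k,0}}\le-\mu$. The paper simply asserts the averaged bound $\frac12\inner{\nabla^2 f(x^k)h}{h}+\frac{L_T}{20}\|h\|^4$. That bound is the whole content of the lemma, and your proposal leaves it open. Neither route you describe closes it.

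\textbf{Why your two routes fall short.} Write $\psi\coloneqq\varphi'''$. Your (correct) sum becomes
\[
\tfrac12\big(\Omega_k(x^k+h)+\Omega_k(x^k-h)\big)=\tfrac12\varphi''(0)+\int_0^1\tfrac{5-6s}{24}\big(\psi(s)-\psi(-s)\big)\,\mathrm{d}s+\tfrac{L_T}{192}\|h\|^4 .
\]
If you bound $|\psi(s)-\psi(-s)|\le 2L_T s\|h\|^4$ pointwise, the kernel's sign change at $s=5/6$ gives no cancellation, because you are taking absolute values. You then get $\frac{L_T\|h\|^4}{12}\int_0^1 s|5-6s|\,\mathrm{d}s=\frac{71}{1296}L_T\|h\|^4$, so the total constant is $\frac{71}{1296}+\frac{1}{192}\approx 0.060>\frac1{20}$.

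The fallback fails for a structural reason: $x^{k+1}$ minimizes $\Omega_k$, not $f$. By \eqref{eq:tensor-free-function-taylor} you only know $0\le\Omega_k(x)-(f(x)-f(x^k))\le\frac{L_T}{96}\|x-x^k\|^4$. So the claim that minimizing $\Omega_k$ ``does not lose'' against the average of $f$ is false in general; for an even quartic $\varphi$ it loses exactly $\frac{L_T}{192}\|h\|^4$. Even with the sharp Taylor constant $\frac{L_T}{24}$ in place of $\frac{L_T}{20}$, this route gives only $\frac1{24}+\frac1{96}=\frac{5}{96}>\frac1{20}$. That is a decrease of $\frac{43\mu^2}{96L_T}$, short of $\frac{9\mu^2}{20L_T}$. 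This route is the right one for \Cref{lem:binary-initial-curvature}, where the candidates in \eqref{eq:binary-candidates} are selected by $f$ itself, but not for the rule \eqref{eq:comparison}.

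\textbf{The missing idea} is to go one derivative deeper. The odd part $o(s)\coloneqq\frac12(\psi(s)-\psi(-s))$ is itself $L_T\|h\|^4$-Lipschitz with $o(0)=0$. Integrate by parts against $G(s)\coloneqq\int_s^1\frac{5-6u}{12}\,\mathrm{d}u=\frac{(1-s)(2-3s)}{12}$, which vanishes at $s=1$:
\[
\int_0^1\tfrac{5-6s}{12}\,o(s)\,\mathrm{d}s=\tfrac{1}{12}\int_0^1(1-s)(2-3s)\,o'(s)\,\mathrm{d}s\le\tfrac{L_T\|h\|^4}{12}\int_0^1\big|(1-s)(2-3s)\big|\,\mathrm{d}s=\tfrac{29}{648}L_T\|h\|^4 .
\]
Adding the regularizer gives $\frac{29}{648}+\frac{1}{192}=\frac{259}{5184}<\frac{1}{20}$. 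This is the averaged bound, and the lemma follows. The slack is only $\frac1{20}-\frac{259}{5184}=\frac{1}{25920}$, which is why every lossy step overshoots: separate remainders, pointwise bounds on $\psi(s)-\psi(-s)$, or passing through $f$.

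The same computation on $[0,1]$, with $\psi(s)-\psi(0)$ in place of $o$, gives the pointwise bound
\[
\Omega_k(x)\le\inner{\nabla f(x^k)}{x-x^k}+\tfrac12\inner{\nabla^2f(x^k)(x-x^k)}{x-x^k}+\tfrac16\nabla^3f(x^k)[x-x^k]^{\otimes3}+\tfrac{L_T}{20}\|x-x^k\|^4 .
\]
This is what the paper's one-line step, and the proof of \Cref{lem:decrease}, implicitly rely on; averaging it over $\pm h$ kills the odd terms.
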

\begin{proof}
Note that $x^{k+1} \in \argmin_{x\in{x^k\pm\sqrt{\mu/L_T} v^{k,0}}} \Omega_k(x)$ when \eqref{eq:gate} holds at $x^{k,0}=x^k$. We then have
\begin{align*}
    \textstyle \Omega_k(x^{k+1}) \le \frac{1}{2}(\Omega_k(x^k+\sqrt{\mu/L_T} v^{k,0}) + \Omega_k(x^k-\sqrt{\mu/L_T} v^{k,0})) \le \frac{\mu}{2L_T}\inner{\nabla^2f(x^k)v^{k,0}}{v^{k,0}}+\frac{\mu^2}{20L_T}.
\end{align*}
Since, \eqref{eq:gate} reduces to $\lambda_{\min}(\nabla^2 f(x^k) + \mu I) \le 0$ at $t=0$, it follows that $\Omega_k(x^{k+1}) \le -\frac{9\mu^2}{20L_T}$. Therefore, using $f(x^{k+1})-f(x^k)\le \Omega_k(x^{k+1})$ completes the proof.
\end{proof}

\begin{lemma}\label{lem:decrease}
Under \Cref{ass:third}, suppose that $256L_T\Delta^2\le\mu^2$. If \eqref{eq:gate} holds at an inner iterate $x^{k,t}$ with $t\ge1$ and $\anorm{x^{k,t}-x^k}\le2\Delta$, then we have
\begin{align*}
   f(x^{k+1})-f(x^k)\le-\frac{\mu^2}{32L_T}. 
\end{align*}
\end{lemma}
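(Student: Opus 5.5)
The plan is to reduce everything to the exact third-order Taylor model at $x^k$ and then average over the three candidates in \eqref{eq:comparison}. Write $h\coloneqq x^{k,t}-x^k$, $a\coloneqq 4s^{k,t}h$, $v\coloneqq v^{k,t}$, $r\coloneqq\sqrt{\mu/L_T}$, $H\coloneqq\nabla^2 f(x^k)$ and $T\coloneqq\nabla^3 f(x^k)$. Let
\[
m(u)\coloneqq\inner{\nabla f(x^k)}{u}+\tfrac12\inner{Hu}{u}+\tfrac16T[u]^{\otimes3}.
\]

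\textbf{Step 1: compare $\Omega_k$ with $m$.} Since $x^{k+1}$ minimizes $\Omega_k$ over the candidates, $f(x^{k+1})-f(x^k)\le\Omega_k(x^{k+1})\le\Omega_k(y)$ for every candidate $y$. By \eqref{eq:tensor-free-function-taylor} we have $\Omega_k(y)\le f(y)-f(x^k)+\frac{L_T}{96}\|y-x^k\|^4$. Then \eqref{eq:tensor-free-function-taylor} followed by \eqref{eq:tensor-function-taylor} gives, with $c\coloneqq\frac1{20}+\frac1{96}$,
\[
\Omega_k(x^k+u)\le m(u)+cL_T\|u\|^4 .
\]
So it suffices to bound a convex combination of $m$ over $u\in\{-a,\,a+rv,\,a-rv\}$ and then add the quartic errors.

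\textbf{Step 2: average the model.} A direct expansion gives two identities:
\[
\tfrac12\big(m(a+rv)+m(a-rv)\big)=m(a)+\tfrac{r^2}{2}\inner{(H+T[a,\bullet,\bullet])v}{v},
\qquad
m(a)+m(-a)=\inner{Ha}{a}.
\]
I will use weights $\frac12$ on $-a$ and $\frac14$ on each of $a\pm rv$. This yields
\[
\tfrac12\inner{Ha}{a}+\tfrac{r^2}{4}\inner{(H+4s^{k,t}T[h,\bullet,\bullet])v}{v}.
\]

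\textbf{Step 3: use the gate.} By \eqref{eq:gate} and \eqref{eq:certificate}, $\inner{(H+\mu I+4s^{k,t}(\nabla^2f(x^{k,t})-H))v}{v}\le0$. By \eqref{eq:taylor}, replacing $\nabla^2f(x^{k,t})-H$ by $T[h,\bullet,\bullet]$ costs at most $4\cdot\frac{L_T}{2}\|h\|^2$. Hence $\inner{(H+4s^{k,t}T[h,\bullet,\bullet])v}{v}\le-\mu+2L_T\|h\|^2$.

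\textbf{Step 4: size bounds and the main term.} From $\anorm{h}\le2\Delta$ and $\nabla^2\Phi_k(x^k)\succ\mu I$ we get $\|h\|^2\le4\Delta^2/\mu$ and $\inner{Hh}{h}\le\anorm{h}^2\le4\Delta^2$. Thus $\frac12\inner{Ha}{a}=8\inner{Hh}{h}\le32\Delta^2\le\frac{\mu^2}{8L_T}$, using $256L_T\Delta^2\le\mu^2$. Also $\|a\|^2\le64\Delta^2/\mu\le\frac{\mu}{4L_T}$ and $2L_T\|h\|^2\le\mu/32$. The main term is therefore at most
\[
-\tfrac{\mu^2}{4L_T}\big(1-\tfrac1{32}\big)+\tfrac{\mu^2}{8L_T}\approx-\tfrac{0.117\,\mu^2}{L_T}.
\]

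\textbf{Step 5: quartic errors.} The weighted quartic error is $cL_T\big(\frac12\|a\|^4+\frac14\|a+rv\|^4+\frac14\|a-rv\|^4\big)$. I bound it using $\|a\pm rv\|^2\le(\|a\|+r)^2$ with $\|a\|\le\frac12\sqrt{\mu/L_T}$ and $r=\sqrt{\mu/L_T}$. That gives roughly $cL_T\cdot\frac{\mu^2}{L_T^2}\big(\frac{1}{32}+\frac{81}{32}\big)\approx0.155\,\mu^2/L_T$, which is too crude.

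This bookkeeping is the main obstacle: the generous inequalities must be tightened so the total stays below $-\mu^2/(32L_T)$. I will first expand $\|a\pm rv\|^4$ exactly, so that the cross terms $\pm2r\inner{a}{v}$ cancel in the average: $\frac12\sum_\pm\|a\pm rv\|^4=(\|a\|^2+r^2)^2+4r^2\inner{a}{v}^2\le(\|a\|^2+r^2)^2+4r^2\|a\|^2$. With $\|a\|^2\le r^2/4$ this is $\frac{41}{16}r^4$, so the weighted error is at most $cL_T r^4\big(\frac1{32}+\frac{41}{32}\big)\approx0.08\,\mu^2/L_T$. That still leaves about $-0.037\,\mu^2/L_T$, which beats $-\frac{1}{32}\approx-0.031$.

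If the margin needs more room, I will re-optimize the weights between $-a$ and $a\pm rv$.
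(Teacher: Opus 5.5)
Your proof is correct and follows the paper's argument. It uses the same three candidates weighted $\tfrac12,\tfrac14,\tfrac14$, the same use of \eqref{eq:gate} with \eqref{eq:taylor} to get the bound $-\mu+2L_T\|x^{k,t}-x^k\|^2$, and the same exact expansion of the averaged quartic terms. You assert $\nabla^2\Phi_k(x^k)\succ\mu I$ without saying why; it holds because \eqref{eq:gate} failed at $t=0$.

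The differences are only in bookkeeping. You take the per-candidate error constant $\tfrac{1}{20}+\tfrac{1}{96}$, which is what \eqref{eq:tensor-free-function-taylor} and \eqref{eq:tensor-function-taylor} actually justify for $\Omega_k(x^k+u)-m(u)$; the paper writes $\tfrac{L_T}{20}$ there, joined by an ``$=$'', without derivation. You also discard the $-16\mu\|x^{k,t}-x^k\|^2$ term that the paper keeps to absorb the other $\|x^{k,t}-x^k\|^2$ terms. As a result you end at $-\tfrac{97}{2560}\tfrac{\mu^2}{L_T}$ rather than the paper's $-\tfrac{\mu^2}{10L_T}$, which still beats the required $-\tfrac{\mu^2}{32L_T}$.
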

\begin{proof}
Since \eqref{eq:gate} holds at an inner iterate $x^{k,t}$ with $t\ge1$, by \eqref{eq:certificate}, we have
\begin{align*}
    \textstyle \inner{\big(\nabla^2 f(x^k)+\mu I+4s^{k,t}(\nabla^2 f(x^{k,t})-\nabla^2 f(x^k))\big)v^{k,t}}{v^{k,t}} \le 0.
\end{align*}
By using \eqref{eq:taylor} and $\|v^{k,t}\|=1$, we further obtain
\begin{align*}
    \textstyle \inner{\nabla^2 f(x^k)v^{k,t}}{v^{k,t}}+4s^{k,t}\nabla^3 f(x^k)[x^{k,t}-x^k,v^{k,t},v^{k,t}] \le -\mu + 2L_T\|x^{k,t}-x^k\|^2.
\end{align*}
By the definition of $x^{k+1}$ in \eqref{eq:negative-curvature-next iterate} at inner iteration $t \ge 1$, we have
\begin{align*}
    &\textstyle \Omega_k(x^{k+1}) \\
    \le \ &\textstyle \frac{1}{2}\Omega_k(x^k-4s^{k,t}(x^{k,t}-x^k)) \\
    &\textstyle +\frac{1}{4}\Omega_k(x^k+4s^{k,t}(x^{k,t}-x^k)+\sqrt{\mu/L_T} v^{k,t})+\frac{1}{4}\Omega_k(x^k+4s^{k,t}(x^{k,t}-x^k)-\sqrt{\mu/L_T} v^{k,t}) \\
    = \ &\textstyle 8\inner{\nabla^2 f(x^k)(x^{k,t}-x^k)}{x^{k,t}-x^k}+\frac{\mu}{4L_T}(\inner{\nabla^2 f(x^k)v^{k,t}}{v^{k,t}}+4s^{k,t}\nabla^3 f(x^k)
    [x^{k,t}-x^k,v^{k,t},v^{k,t}])\\
    &\textstyle +\frac{L_T}{40}\|-4s^{k,t}(x^{k,t}-x^k)\|^4 + \frac{L_T}{80}\|4s^{k,t}(x^{k,t}-x^k)
    +\sqrt{\mu/L_T} v^{k,t}\|^4 + \frac{L_T}{80}\|4s^{k,t}(x^{k,t}-x^k)-\sqrt{\mu/L_T} v^{k,t}\|^4 \\
    \le \ &\textstyle 8\inner{\nabla^2 f(x^k)(x^{k,t}-x^k)}{x^{k,t}-x^k}+\frac{\mu}{4L_T}(-\mu + 2L_T\|x^{k,t}-x^k\|^2)\\
    &\textstyle +\frac{L_T}{40}\|-4s^{k,t}(x^{k,t}-x^k)\|^4 + \frac{L_T}{80}\|4s^{k,t}(x^{k,t}-x^k)
    +\sqrt{\mu/L_T} v^{k,t}\|^4 + \frac{L_T}{80}\|4s^{k,t}(x^{k,t}-x^k)-\sqrt{\mu/L_T} v^{k,t}\|^4.
\end{align*}
On the one hand, since $\nabla^2 f(x^k)=\nabla^2\Phi_k(x^k)-2\mu I$, we have
\begin{align*}
    \textstyle \inner{\nabla^2 f(x^k)(x^{k,t}-x^k)}{x^{k,t}-x^k}=\anorm{x^{k,t}-x^k}^2 - 2\|x^{k,t}-x^k\|^2. 
\end{align*}
On the other hand, direct calculation gives
\begin{align*}
    &\textstyle \frac{1}{2}\|-4s^{k,t}(x^{k,t}-x^k)\|^4
    +\frac{1}{4}\|4s^{k,t}(x^{k,t}-x^k)
    +\sqrt{\mu/L_T} v^{k,t}\|^4 + \frac{1}{4}\|4s^{k,t}(x^{k,t}-x^k)-\sqrt{\mu/L_T} v^{k,t}\|^4\\
    = \ &\textstyle 256\|x^{k,t}-x^k\|^4+\frac{16\mu}{L_T}\|x^{k,t}-x^k\|^2+\frac{32\mu}{L_T}\inner{x^{k,t}-x^k}{v^{k,t}}^2+\frac{\mu^2}{2L_T^2}\\
    \le \ &\textstyle 256\|x^{k,t}-x^k\|^4+\frac{48\mu}{L_T}\|x^{k,t}-x^k\|^2+\frac{\mu^2}{2L_T^2},
\end{align*}
where the last step uses the Cauchy--Schwarz inequality and $\|v^{k,t}\|=1$. Therefore, combining the above inequalities gives
\begin{align*}
    \textstyle f(x^{k+1}) - f(x^k) \le \Omega_k(x^{k+1}) \le 8\anorm{x^{k,t}-x^k}^2-\frac{131\mu}{10}\|x^{k,t}-x^k\|^2+\frac{64L_T}{5}\|x^{k,t}-x^k\|^4-\frac{9\mu^2}{40L_T}.
\end{align*}
Note that $\nabla^2 \Phi_k(x^k) \succ \mu I$ holds because \Cref{alg:accppnewton} reaches the inner loop with $t \ge 1$, which implies that condition \eqref{eq:gate} is not satisfied at $x^k$. Under the condition $\|x^{k,t}-x^k\|_{k,\mu} \le 2\Delta$ and $256L_T\Delta^2 \le \mu^2$, we have
\begin{align*}
    \textstyle \frac{64L_T}{5}\|x^{k,t}-x^k\|^4
    \le\frac{256L_T\Delta^2}{5\mu}\|x^{k,t}-x^k\|^2
    \le\frac\mu5\|x^{k,t}-x^k\|^2.
\end{align*}
which yields
\begin{align*}
   \textstyle f(x^{k+1})-f(x^k)\le32\Delta^2-\frac{9\mu^2}{40L_T}
    \le-\frac{\mu^2}{10L_T}\le-\frac{\mu^2}{32L_T}.
\end{align*}
The proof is complete.
\end{proof}
\begin{remark}
    Note that the above analysis requires the condition $\|x^{k,t}-x^k\|_{k,\mu} \le 2\Delta$. As we will show later, this condition is automatically satisfied in both the large-step case and the small-step case (\Cref{lem:initial-newton} and \Cref{lem:contraction}).
\end{remark}

\begin{lemma}[Large-step case]\label{lem:initial-newton}
Under \Cref{ass:hessian,ass:third}, suppose $256L_T\Delta^2\le\mu^2$ and the first Newton step satisfies $\anorm{d^{k,0}}\ge\Delta$. Whether \eqref{eq:gate} holds or does not hold at $x^{k,1}$, we have
\begin{equation*}
    f(x^{k+1})-f(x^k)\le-\frac{2}{5}\Delta^2.
\end{equation*}
\end{lemma}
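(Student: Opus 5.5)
The plan is to split on whether \eqref{eq:gate} holds at $x^{k,1}$. In both branches the first step is the same: $\anorm{d^{k,0}}\ge\Delta$ at $t=0$ (the gate failed at $x^k$, so $\nabla^2\Phi_k(x^k)\succ\mu I$), hence $\eta_{k,0}=\Delta/\anorm{d^{k,0}}$ and $x^{k,1}=x^k-\Delta d^{k,0}/\anorm{d^{k,0}}$. Therefore $\anorm{x^{k,1}-x^k}=\Delta\le2\Delta$ and $\sqrt\mu\|x^{k,1}-x^k\|\le\Delta$.

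\textbf{Gate holds at $x^{k,1}$.} The hypotheses of \Cref{lem:decrease} are met with $t=1$: $256L_T\Delta^2\le\mu^2$ and $\anorm{x^{k,1}-x^k}\le2\Delta$. It gives $f(x^{k+1})-f(x^k)\le-\mu^2/(32L_T)$. Since $256L_T\Delta^2\le\mu^2$, we have $\mu^2/(32L_T)\ge 8\Delta^2\ge\tfrac25\Delta^2$, which closes this branch.

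\textbf{Gate fails at $x^{k,1}$.} The algorithm sets $x^{k+1}=x^{k,1}$, since $t=1$ and $\anorm{x^{k,1}-x^k}=\Delta$. I would not use the third-order Taylor bound \eqref{eq:function-taylor}, whose cubic term needs $L_H$ to be small relative to $\mu^{3/2}/\Delta$, a condition not assumed here. Instead I use the tensor-free bound $f(x^{k,1})-f(x^k)\le\Omega_k(x^{k,1})$. Write $h=x^{k,1}-x^k=-\eta d^{k,0}$ with $\eta=\Delta/\anorm{d^{k,0}}\le1$, and $g=\nabla\Phi_k(x^k)$, $H=\nabla^2\Phi_k(x^k)$.

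\begin{enumerate}
\item Decompose $\tfrac12\inner{\nabla f(x^k)+\nabla f(x^{k,1})}{h}$ as
\[
\inner{\nabla f(x^k)}{h}+\tfrac12\inner{\nabla f(x^{k,1})-\nabla f(x^k)}{h}.
\]
Express the gradient difference as $\int_0^1\nabla^2 f(x^k+\tau h)\,d\tau\,h$.
\item Apply the chord inequality \eqref{eq:chord}. It bounds the integrated Hessian by $\tfrac12(\nabla^2 f(x^k)+\nabla^2 f(x^{k,1}))$ up to an error of $\tfrac{L_T}{12}\|h\|^2$ in operator norm.
\item Collect terms. The resulting bound is
\[
\Omega_k(x^{k,1})\le\inner{\nabla f(x^k)}{h}+\tfrac14\inner{(\nabla^2 f(x^k)+\nabla^2 f(x^{k,1}))h}{h}-\tfrac1{12}\inner{(\nabla^2 f(x^{k,1})-\nabla^2 f(x^k))h}{h}+c\,L_T\|h\|^4,
\]
where $c$ is an absolute constant.
\item Rewrite this as $\inner{\nabla f(x^k)}{h}+\tfrac12\inner{\nabla^2 f(x^k)h}{h}+\tfrac16\inner{(\nabla^2 f(x^{k,1})-\nabla^2 f(x^k))h}{h}+cL_T\|h\|^4$.
\item Bound the Hessian-difference term using the failed gate, in the form \eqref{eq:variation-norm-bound}. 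It gives $|\inner{(\nabla^2 f(x^{k,1})-\nabla^2 f(x^k))h}{h}|\le\tfrac14\anorm{h}^2$, so this term is at most $\tfrac1{24}\Delta^2$.
\item Replace $\nabla f(x^k)$ by $g$ and $\nabla^2 f(x^k)$ by $H-2\mu I$; the bias terms from $2\mu$ are nonpositive.
\end{enumerate}

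This leaves $\inner{g}{h}+\tfrac12\anorm{h}^2-\mu\|h\|^2$. Since $\inner{g}{h}=-\Delta\anorm{d^{k,0}}\le-\Delta^2$ and $\anorm{h}^2=\Delta^2$, the quadratic part is at most $-\tfrac12\Delta^2-\mu\|h\|^2$. For the quartic remainder, $L_T\|h\|^4\le L_T\Delta^2\|h\|^2/\mu\le\mu\|h\|^2/256$, so it is absorbed by $-\mu\|h\|^2$. The total is then $\le-\tfrac12\Delta^2+\tfrac1{24}\Delta^2\le-\tfrac25\Delta^2$.

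The main obstacle is the algebra in the second branch: getting a clean upper bound for $\Omega_k$ in terms of quantities at $x^k$ plus a Hessian-difference term controlled only by the gate. The coefficient of that term must be small enough, here $\tfrac16\cdot\tfrac14$, to keep the final constant at $\tfrac25$. If the chord-based route gives a worse coefficient, I would fall back to bounding $\Omega_k$ through \eqref{eq:tensor-function-taylor} and \eqref{eq:taylor} directly. That is, I would write $f(x^{k,1})-f(x^k)$ via the third-order Taylor expansion, and replace $\tfrac16\nabla^3 f(x^k)[h]^{\otimes3}$ by $\tfrac16\inner{(\nabla^2 f(x^{k,1})-\nabla^2 f(x^k))h}{h}$ with an $O(L_T\|h\|^4)$ error. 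This yields the same structure.
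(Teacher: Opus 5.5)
Your proposal is correct and is essentially the paper's proof. The gate-holds branch goes through \Cref{lem:decrease} exactly as in the paper. In the gate-fails branch, both arguments combine \eqref{eq:chord}, the Newton identity $\inner{\nabla\Phi_k(x^k)}{h}\le-\anorm{h}^2$, the failed-gate bound, and absorption of the quartic term into $-\mu\|h\|^2$, and both reach the same constant $-\tfrac{11}{24}\Delta^2$.

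The only difference is cosmetic. The paper applies \eqref{eq:chord} directly to the integral Taylor remainder of $\Phi_k$ (weight $1-\tau$), getting $\tfrac13\nabla^2\Phi_k(x^k)+\tfrac16\nabla^2\Phi_k(x^{k,1})$, and then uses the one-sided bound in \eqref{eq:relative-reserve}. You detour through $\Omega_k$ and get the algebraically identical $\tfrac12\nabla^2\Phi_k(x^k)+\tfrac16(\nabla^2\Phi_k(x^{k,1})-\nabla^2\Phi_k(x^k))$, with a slightly larger but harmless quartic constant. Your fallback plan is therefore unnecessary.
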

\begin{proof}
If \eqref{eq:gate} holds at $x^{k,1}$, since $\|x^{k,1}-x^k\|_{k,\mu}=\Delta \le 2\Delta$, \Cref{lem:decrease} gives a decrease of at least $\mu^2/(32L_T)\ge\frac{2}{5}\Delta^2$ under the choice $256L_T\Delta^2\le\mu^2$. It remains to consider the case in which \eqref{eq:gate} does not hold at $x^{k,1}$. By $f(x)-f(x^k) = \Phi_k(x)-\Phi_k(x^k)-\mu\|x-x^k\|^2$, we have
\begin{align*}
    &\textstyle f(x^{k,1})-f(x^k) \\
    = \ &\textstyle \Phi_k(x^{k,1}) - \Phi_k(x^k) - \mu\|x^{k,1}-x^k\|^2 \\
    = \ &\textstyle \inner{\nabla\Phi_k(x^k)}{x^{k,1}-x^k}
    +\int_0^1(1-\tau)\inner{\nabla^2\Phi_k\bigl(x^k+\tau(x^{k,1}-x^k)\bigr)(x^{k,1}-x^k)}{x^{k,1}-x^k} \mathrm{d}\tau
    -\mu\|x^{k,1}-x^k\|^2\\
    \le \ &\textstyle \inner{\nabla\Phi_k(x^k)}{x^{k,1}-x^k}
    +\frac{1}{3}\inner{\nabla^2\Phi_k(x^k)(x^{k,1}-x^k)}{x^{k,1}-x^k}\\
    &\textstyle+\frac{1}{6}\inner{\nabla^2\Phi_k(x^{k,1})(x^{k,1}-x^k)}{x^{k,1}-x^k}-\mu\|x^{k,1}-x^k\|^2+\frac{L_T}{24}\|x^{k,1}-x^k\|^4,
\end{align*}
where the last step follows from \eqref{eq:chord}. Moreover, by the Newton equation $\nabla^2 \Phi_k(x^k)d^{k,0}=\nabla \Phi_k(x^k)$ and $x^{k,1} = x^k-\eta_{k,0} d^{k,0}$, it follows that
\begin{align*}
    \textstyle \inner{\nabla\Phi_k(x^k)}{x^{k,1}-x^k}
    =-\frac{1}{\eta_{k,0}}\anorm{x^{k,1}-x^k}^2
    \le-\anorm{x^{k,1}-x^k}^2,
\end{align*}
where we use $\eta_{k,0}\le1$ in the last inequality. Therefore, by \eqref{eq:relative-reserve}, we obtain
\begin{align*}
    &\textstyle f(x^{k,1})-f(x^k) \\
    \le &\textstyle
    -\frac{2}{3}\anorm{x^{k,1}-x^k}^2
    +\frac{1}{6}\inner{\nabla^2\Phi_k(x^{k,1})(x^{k,1}-x^k)}{x^{k,1}-x^k}
    -\mu\|x^{k,1}-x^k\|^2
    +\frac{L_T}{24}\|x^{k,1}-x^k\|^4\\
    \le &\textstyle
    -\frac{11}{24}\anorm{x^{k,1}-x^k}^2
    -\mu\|x^{k,1}-x^k\|^2
    +\frac{L_T}{24}\|x^{k,1}-x^k\|^4.
\end{align*}
Recall that $\anorm{x^{k,1}-x^k}^2 = \Delta$ and $\nabla^2 \Phi_k(x^k) \succ \mu I$. We thus have $\|x^{k,1}-x^k\|^2\le\Delta^2/\mu$. Under the choice $256L_T\Delta^2\le\mu^2$, it follows that $f(x^{k+1}) = f(x^{k,1}) \le f(x^k) -\frac{2}{5}\Delta^2$, which completes the proof.
\end{proof}

In the following, we focus on the small-step case. We show that as long as $\anorm{d^{k,0}}<\Delta$, every subsequent inner step takes a full Newton update, i.e., $\anorm{d^{k,t}}<\Delta$ for all $t \ge 1$.
\begin{lemma}[Small-step case: contraction]\label{lem:contraction}
Under \Cref{ass:hessian,ass:third}, suppose $256L_T\Delta^2\le\mu^2$ and the first Newton step satisfies $\anorm{d^{k,0}}<\Delta$. Then every inner iterate $x^{k,t}$ with $t\ge1$ satisfies
\begin{equation}\label{eq:contraction}
\textstyle \anorm{x^{k,t}-x^{k,t-1}}\le2^{1-t}\Delta, \ 
    \anorm{x^{k,t}-x^k}\le2\Delta, \ 
    \|x^{k,t}-x^k\|^2\le\frac{4\Delta^2}{\mu}.
\end{equation}
\end{lemma}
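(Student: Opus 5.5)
The plan is to argue by induction on $t$ and show that, in the small-step case, the local-norm length of consecutive Newton steps at least halves. Write $H_0\coloneqq\nabla^2\Phi_k(x^k)$, $H_t\coloneqq\nabla^2\Phi_k(x^{k,t})$ and $h_t\coloneqq x^{k,t}-x^{k,t-1}$. The statement concerns inner iterates actually reached by \Cref{alg:accppnewton}, so whenever a Newton direction $d^{k,t}$ is computed, \eqref{eq:gate} fails at $x^{k,t}$. In that case \eqref{eq:relative-reserve} and \eqref{eq:variation-norm-bound} hold at $x^{k,t}$, and trivially at $x^{k,0}=x^k$.

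\emph{Base case.} Since $\anorm{d^{k,0}}<\Delta$, we have $\eta_{k,0}=1$ and $h_1=-d^{k,0}$, so $\anorm{h_1}<\Delta$.

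\emph{Inductive step.} Assume $\anorm{h_t}\le 2^{1-t}\Delta$ and that the full step was taken, so that $\nabla\Phi_k(x^{k,t-1})+H_{t-1}h_t=0$. Then
\begin{equation*}
\nabla\Phi_k(x^{k,t})=\int_0^1\big(\nabla^2\Phi_k(x^{k,t-1}+\tau h_t)-H_{t-1}\big)h_t\,\mathrm d\tau .
\end{equation*}
I will not use Hessian Lipschitz continuity here, since it would need a condition like $L_H\Delta\lesssim\mu^{3/2}$, which is not assumed. Instead I apply the chord inequality \eqref{eq:chord}, noting that $\nabla^2\Phi_k$ and $\nabla^2 f$ differ only by the constant $2\mu I$. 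It replaces the integrand by $\tau(H_t-H_{t-1})h_t$ up to an error of norm $\tfrac{L_T}{2}\tau(1-\tau)\|h_t\|^3$. This gives
\begin{equation*}
\nabla\Phi_k(x^{k,t})=\tfrac12(H_t-H_{t-1})h_t+E_t,\qquad \|E_t\|\le\tfrac{L_T}{12}\|h_t\|^3 .
\end{equation*}

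I then bound the dual local norm $\|H_0^{-1/2}\nabla\Phi_k(x^{k,t})\|$.
\begin{itemize}
\item By the first line of \eqref{eq:variation-norm-bound} applied at both $x^{k,t}$ and $x^{k,t-1}$, we get $\|H_0^{-1/2}(H_t-H_{t-1})H_0^{-1/2}\|_{\op}\le\tfrac12$. Hence the first term contributes at most $\tfrac14\anorm{h_t}$.
\item For the error, $H_0\succ\mu I$ gives $\|h_t\|^2\le\anorm{h_t}^2/\mu\le\Delta^2/\mu$. Therefore
\begin{equation*}
\|H_0^{-1/2}E_t\|\le\mu^{-1/2}\tfrac{L_T}{12}\|h_t\|^3\le\tfrac{L_T\Delta^2}{12\mu^2}\anorm{h_t}\le\tfrac{1}{12\cdot256}\anorm{h_t},
\end{equation*}
using $256L_T\Delta^2\le\mu^2$.
\end{itemize}

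Next, $\anorm{d^{k,t}}=\|H_0^{1/2}H_t^{-1}H_0^{1/2}\,H_0^{-1/2}\nabla\Phi_k(x^{k,t})\|$. The second line of \eqref{eq:variation-norm-bound} then yields
\begin{equation*}
\anorm{d^{k,t}}\le\tfrac43\big(\tfrac14+\tfrac1{3072}\big)\anorm{h_t}\le\tfrac12\anorm{h_t}\le2^{-t}\Delta<\Delta .
\end{equation*}
Hence $\eta_{k,t}=1$ and $h_{t+1}=-d^{k,t}$, which closes the induction. It also shows that the large-step exit ``$t=1$ and $\anorm{x^{k,1}-x^k}=\Delta$'' is never triggered.

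\emph{Remaining claims.} The triangle inequality and the geometric sum give $\anorm{x^{k,t}-x^k}\le\sum_{j\ge1}2^{1-j}\Delta=2\Delta$. Finally, $H_0\succ\mu I$ converts this to $\|x^{k,t}-x^k\|^2\le4\Delta^2/\mu$.

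The main obstacle is the inductive contraction step. The key idea is to avoid Hessian Lipschitz continuity and instead use the chord inequality together with the relative Hessian bounds certified by the failed gate \eqref{eq:gate}. The gate certifies only the endpoints $x^{k,t-1}$ and $x^{k,t}$, not the whole segment between them, so the chord inequality is needed to bridge the segment.
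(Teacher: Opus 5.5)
Your proposal is correct and follows the paper's proof essentially step for step. Both arguments use induction on full steps, the integral representation of $\nabla\Phi_k(x^{k,t})$, the chord inequality \eqref{eq:chord} to isolate $\tfrac12(H_t-H_{t-1})h_t$, the endpoint bounds from \eqref{eq:variation-norm-bound} at $x^{k,t-1}$ and $x^{k,t}$, and the factor $\tfrac43$ from the inverse sandwich bound. The only difference is cosmetic: you bound the chord remainder directly in the dual local norm and get the sharper constant $\tfrac1{3072}$, where the paper settles for a cruder total of $\tfrac13$, but both yield $\anorm{d^{k,t}}\le\tfrac12\anorm{h_t}$.
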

\begin{proof}
We first prove by induction that every inner iterate $x^{k,t}$ with $t\ge1$ satisfies
\begin{align*}
    \textstyle \eta_{k,t-1}=1, \ \anorm{x^{k,t}-x^{k,t-1}}\le2^{1-t}\Delta.
\end{align*}
The case $t=1$ is trivial since $\anorm{x^{k,1}-x^{k,0}}=\|d^{k,0}\|_{k,\mu} < \Delta$. Suppose the claim holds at index $t\ge1$ and $x^{k,t+1}$ is reached. By the definition $d^{k,t}$, it holds that
\begin{align*}
    \textstyle \anorm{d^{k,t}} = \anorm{\nabla^2 \Phi_k(x^{k,t})^{-1}\nabla \Phi_k(x^{k,t})} = \|\nabla^2\Phi_k(x^k)^{1/2}\nabla^2\Phi_k(x^{k,t})^{-1}\nabla\Phi_k(x^{k,t})\|.
\end{align*}
Since
\begin{align*}
   &\textstyle \nabla^2\Phi_k(x^k)^{1/2}\nabla^2\Phi_k(x^{k,t})^{-1}\nabla\Phi_k(x^{k,t})\\
   =\ &\textstyle \nabla^2\Phi_k(x^k)^{1/2}\nabla^2\Phi_k(x^{k,t})^{-1}\nabla^2\Phi_k(x^k)^{1/2}\nabla^2\Phi_k(x^k)^{-1/2}\nabla\Phi_k(x^{k,t}),
\end{align*}
we have
\begin{align*}
   \textstyle \anorm{d^{k,t}} \le \ &\textstyle \|\nabla^2\Phi_k(x^k)^{1/2}\nabla^2\Phi_k(x^{k,t})^{-1}\nabla^2\Phi_k(x^k)^{1/2}\|_{\op}\|\nabla^2\Phi_k(x^k)^{-1/2}\nabla\Phi_k(x^{k,t})\| \\
   \le \ &\textstyle \frac{4}{3}\|\nabla^2\Phi_k(x^k)^{-1/2}\nabla\Phi_k(x^{k,t})\|,
\end{align*}
where the second inequality follows from \eqref{eq:variation-norm-bound}. Consequently, it suffices to bound the term $\|\nabla^2\Phi_k(x^k)^{-1/2}\nabla\Phi_k(x^{k,t})\|$. By $\eta_{k,t-1}=1$ and the Newton equation 
\begin{align*}
    \textstyle \nabla \Phi_k(x^{k,t-1}) = -\nabla^2 \Phi_k(x^{k,t-1})(x^{k,t}-x^{k,t-1})
\end{align*}
at $x^{k,t-1}$, it follows that
\begin{align*}
    \textstyle \nabla\Phi_k(x^{k,t}) = \ &\textstyle \nabla\Phi_k(x^{k,t-1}) + \int_0^1 \nabla^2 \Phi_k(x^{k,t-1}+\tau(x^{k,t}-x^{k,t-1}))(x^{k,t}-x^{k,t-1}) \mathrm{d}\tau \\
    = \ &\textstyle -\nabla^2 \Phi_k(x^{k,t-1})(x^{k,t}-x^{k,t-1})+\int_0^1 \nabla^2 \Phi_k(x^{k,t-1}+\tau(x^{k,t}-x^{k,t-1}))(x^{k,t}-x^{k,t-1}) \mathrm{d}\tau \\
    = \ &\textstyle (\int_0^1\nabla^2\Phi_k(x^{k,t-1}+\tau(x^{k,t}-x^{k,t-1}))\mathrm{d}\tau- \nabla^2\Phi_k(x^{k,t-1}))\cdot(x^{k,t}-x^{k,t-1})
\end{align*}
Therefore, we obtain
\begin{align*}
   &\textstyle \|\nabla^2\Phi_k(x^k)^{-1/2}\nabla\Phi_k(x^{k,t})\| \\
   = \ &\textstyle \|\nabla^2\Phi_k(x^k)^{-1/2}(\int_0^1\nabla^2\Phi_k(x^{k,t-1}+\tau(x^{k,t}-x^{k,t-1})) \mathrm{d}\tau-\nabla^2\Phi_k(x^{k,t-1}))(x^{k,t}-x^{k,t-1})\|.
\end{align*}
which further implies
\begin{align*}
    &\textstyle \|\nabla^2\Phi_k(x^k)^{-1/2}\nabla\Phi_k(x^{k,t})\| \\
    \le \ &\textstyle \|\nabla^2\Phi_k(x^k)^{-1/2}(\int_0^1\nabla^2\Phi_k(x^{k,t-1}+\tau(x^{k,t}-x^{k,t-1})) \mathrm{d}\tau-\nabla^2\Phi_k(x^{k,t-1})) \nabla^2\Phi_k(x^k)^{-1/2}\|_{\op}\\
    &\cdot\anorm{x^{k,t}-x^{k,t-1}},
\end{align*}
since $\nabla^2\Phi_k(x^k)^{-1/2} \nabla^2 \Phi_k(x^k)^{1/2}(x^{k,t}-x^{k,t-1})=x^{k,t}-x^{k,t-1}$ and $\|\nabla^2 \Phi_k(x^k)^{1/2}(x^{k,t}-x^{k,t-1})\| = \anorm{x^{k,t}-x^{k,t-1}}$. In the following, we bound the first term on the right-hand side of the above inequality. Note that
\begin{align*}
    &\textstyle \int_0^1\nabla^2\Phi_k(x^{k,t-1}+\tau(x^{k,t}-x^{k,t-1})) \mathrm{d}\tau-\nabla^2\Phi_k(x^{k,t-1})\\
    = \ &\textstyle \tfrac{1}{2}(\nabla^2\Phi_k(x^{k,t})-\nabla^2\Phi_k(x^{k,t-1})) \\
    &\textstyle + \int_0^1\nabla^2\Phi_k(x^{k,t-1}+\tau(x^{k,t}-x^{k,t-1}))-(1-\tau)\nabla^2\Phi_k(x^{k,t-1})-\tau\nabla^2\Phi_k(x^{k,t}) \mathrm{d}\tau.
\end{align*}
We thus have
\begin{align*}
&\textstyle \|\nabla^2\Phi_k(x^k)^{-1/2}(\int_0^1\nabla^2\Phi_k
(x^{k,t-1}+\tau(x^{k,t}-x^{k,t-1})) \mathrm{d}\tau-\nabla^2\Phi_k(x^{k,t-1})) \nabla^2\Phi_k(x^k)^{-1/2}\|_{\op}\\
\le \ &\textstyle \tfrac{1}{2}\|\nabla^2\Phi_k(x^k)^{-1/2}(\nabla^2\Phi_k(x^{k,t})-\nabla^2\Phi_k(x^{k,t-1}))\nabla^2\Phi_k(x^k)^{-1/2}\|_{\op}\\
&\textstyle + \tfrac1\mu\int_0^1\|\nabla^2\Phi_k(x^{k,t-1}+\tau(x^{k,t}-x^{k,t-1}))-(1-\tau)\nabla^2\Phi_k(x^{k,t-1})-\tau\nabla^2\Phi_k(x^{k,t})\|_{\op} \mathrm{d}\tau\\
\le \ &\textstyle \tfrac{1}{2}\|\nabla^2\Phi_k(x^k)^{-1/2}(\nabla^2\Phi_k(x^{k,t})-\nabla^2\Phi_k(x^k)) \nabla^2\Phi_k(x^k)^{-1/2}\|_{\op}\\
&\textstyle+ \tfrac{1}{2}\|\nabla^2\Phi_k(x^k)^{-1/2}(\nabla^2\Phi_k(x^{k,t-1})-\nabla^2\Phi_k(x^k))\nabla^2\Phi_k(x^k)^{-1/2}\|_{\op}\\
&\textstyle +\tfrac{L_T}{2\mu}\|x^{k,t}-x^{k,t-1}\|^2 \int_0^1\tau(1-\tau) \mathrm{d}\tau,
\end{align*}
where the first inequality uses $\|\nabla^2\Phi_k(x^k)^{-1/2}\|_{\op}^2 \le 1/\mu$, and the second inequality uses
\begin{align*}
    \textstyle \nabla^2\Phi_k(x^{k,t})-\nabla^2\Phi_k(x^{k,t-1})=\nabla^2\Phi_k(x^{k,t})-\nabla^2\Phi_k(x^{k}) + \nabla^2\Phi_k(x^{k})-\nabla^2\Phi_k(x^{k,t-1})
\end{align*}
and \eqref{eq:chord}. Recall that \eqref{eq:gate} does not hold at either $x^{k,t-1}$ or $x^{k,t}$. By \eqref{eq:variation-norm-bound}, it follows that
\begin{align*}
    &\textstyle \|\nabla^2\Phi_k(x^k)^{-1/2}(\int_0^1\nabla^2\Phi_k
(x^{k,t-1}+\tau(x^{k,t}-x^{k,t-1})) \mathrm{d}\tau-\nabla^2\Phi_k(x^{k,t-1})) \nabla^2\Phi_k(x^k)^{-1/2}\|_{\op} \\
\le \ &\textstyle \frac{1}{4}+\frac{L_T}{12\mu}\|x^{k,t}-x^{k,t-1}\|^2.
\end{align*}
Since $\mu\|x^{k,t}-x^{k,t-1}\|^2\le\anorm{x^{k,t}-x^{k,t-1}}^2$ and the induction hypothesis gives $\anorm{x^{k,t}-x^{k,t-1}}\le2^{1-t}\Delta \le \Delta$, we have
\begin{align*}
    \textstyle \|\nabla^2\Phi_k(x^k)^{-1/2}(\int_0^1\nabla^2\Phi_k(x^{k,t-1}+\tau(x^{k,t}-x^{k,t-1})) \mathrm{d}\tau-\nabla^2\Phi_k(x^{k,t-1})) \nabla^2\Phi_k(x^k)^{-1/2}\|_{\op} \le \frac{1}{3}.
\end{align*}
Thus, we obtain
\begin{align*}
   \textstyle \|\nabla^2\Phi_k(x^k)^{-1/2}\nabla\Phi_k(x^{k,t})\| \le \frac{1}{3}\|x^{k,t} - x^{k,t-1}\|_{k,\mu},
\end{align*}
which yields
\begin{align*}
    \textstyle \anorm{d^{k,t}} \le \frac{1}{2}\|x^{k,t} - x^{k,t-1}\|_{k,\mu} \le \tfrac{1}{2}\cdot 2^{1-t}\Delta=2^{-t}\Delta < \Delta.
\end{align*}
Therefore, $\eta_{k,t} = 1$ and hence $\anorm{x^{k,t+1}-x^{k,t}}=\anorm{d^{k,t}} \le 2^{-t}\Delta$. The induction is complete. The first inequality in \eqref{eq:contraction} now gives
\begin{align*}
   \textstyle \anorm{x^{k,t}-x^k} =  \anorm{x^{k,t}-x^{k,0}} \le\sum_{j=1}^t\anorm{x^{k,j}-x^{k,j-1}} \le\Delta\sum_{j=1}^t2^{1-j} <2\Delta. 
\end{align*}
Finally, $\mu\|x^{k,t}-x^k\|^2\le\anorm{x^{k,t}-x^k}^2$ yields $\|x^{k,t}-x^k\|^2\le\frac{4\Delta^2}{\mu}$. The proof is complete.
\end{proof}

\begin{lemma}[Small-step case: convergence]\label{lem:small-newton}
Under \Cref{ass:hessian,ass:third}, suppose $256L_T\Delta^2\le\mu^2$ and the first Newton step satisfies $\anorm{d^{k,0}}<\Delta$. Then every inner iterate $x^{k,t}$ with $t \ge 1$ at which \eqref{eq:gate} does not hold satisfies
\begin{equation*}
    \textstyle \|\nabla f(x^{k,t})\|\le\frac{L_H\Delta^2}{2\mu}4^{1-t}+4\sqrt\mu\Delta
    \ \text{and} \ \nabla^2f(x^{k,t})\succ-\mu I.
\end{equation*}
\end{lemma}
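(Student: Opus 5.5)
The plan splits $\nabla f(x^{k,t})$ into the proximal residual plus the proximal bias, in the same way as \Cref{lem:baseline-small-step}:
\begin{equation*}
\|\nabla f(x^{k,t})\|\le\|\nabla\Phi_k(x^{k,t})\|+2\mu\|x^{k,t}-x^k\|.
\end{equation*}
The bias term is handled directly by \Cref{lem:contraction}. Since $\|x^{k,t}-x^k\|^2\le 4\Delta^2/\mu$, we get $2\mu\|x^{k,t}-x^k\|\le 2\mu\cdot 2\Delta/\sqrt\mu=4\sqrt\mu\Delta$.

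The curvature claim follows immediately from \eqref{eq:relative-reserve}. Since \eqref{eq:gate} fails at $x^{k,t}$, we have $\nabla^2\Phi_k(x^{k,t})\succ\tfrac34\nabla^2\Phi_k(x^k)+\tfrac\mu4 I\succ\mu I$. Subtracting $2\mu I$ gives $\nabla^2 f(x^{k,t})\succ-\mu I$.

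The residual term $\|\nabla\Phi_k(x^{k,t})\|$ is where the geometric factor $4^{1-t}$ must come from, and it is the main obstacle. The step $x^{k,t}$ is a full Newton step from $x^{k,t-1}$: by \Cref{lem:contraction}, $\eta_{k,t-1}=1$, and \eqref{eq:gate} fails at $x^{k,t-1}$ because the algorithm reached $x^{k,t}$. Therefore
\begin{equation*}
\nabla\Phi_k(x^{k,t-1})+\nabla^2\Phi_k(x^{k,t-1})(x^{k,t}-x^{k,t-1})=0.
\end{equation*}
Since $\nabla^2\Phi_k$ differs from $\nabla^2 f$ only by the constant $2\mu I$, applying \eqref{eq:gradient-taylor} to $\Phi_k$ gives
\begin{equation*}
\|\nabla\Phi_k(x^{k,t})\|\le\tfrac{L_H}{2}\|x^{k,t}-x^{k,t-1}\|^2.
\end{equation*}
Using $\sqrt\mu\|u\|\le\anorm{u}$ (valid since $\nabla^2\Phi_k(x^k)\succ\mu I$) and the first bound in \eqref{eq:contraction}, $\anorm{x^{k,t}-x^{k,t-1}}\le 2^{1-t}\Delta$, we obtain
\begin{equation*}
\|x^{k,t}-x^{k,t-1}\|^2\le 4^{1-t}\Delta^2/\mu.
\end{equation*}
Hence $\|\nabla\Phi_k(x^{k,t})\|\le\frac{L_H\Delta^2}{2\mu}4^{1-t}$, and adding the bias bound gives the stated inequality.

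The only delicate points are bookkeeping. First, the Newton equation must be taken at $x^{k,t-1}$ with the exact Hessian $\nabla^2\Phi_k(x^{k,t-1})$, so that \eqref{eq:gradient-taylor} applies with no extra error term; this holds because the inner iteration uses exact Newton directions with unit stepsize. Second, $t=1$ corresponds to the plain small-step argument of \Cref{lem:baseline-small-step}, which the same computation covers with factor $4^0=1$.
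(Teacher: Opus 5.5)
Your proposal is correct and follows essentially the same route as the paper. You split $\nabla f(x^{k,t})$ into the proximal residual plus the bias $2\mu\|x^{k,t}-x^k\|$, bound the residual by $\frac{L_H}{2}\|x^{k,t}-x^{k,t-1}\|^2$ via the exact full Newton step at $x^{k,t-1}$ and \eqref{eq:gradient-taylor}, control both terms with the bounds of \Cref{lem:contraction}, and read the curvature claim off \eqref{eq:relative-reserve} together with $\nabla^2\Phi_k(x^k)\succ\mu I$.
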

\begin{proof}
First, note that $\|\nabla f(x^{k,t})\|\le\|\nabla\Phi_k(x^{k,t})\|+2\mu\|x^{k,t}-x^k\|$. Recall that $\nabla^2 \Phi_k(x^k) \succ \mu I$. By \Cref{lem:contraction}, we have
\begin{align*}
    \textstyle \sqrt\mu\|x^{k,t}-x^{k,t-1}\|
    &\textstyle\le\anorm{x^{k,t}-x^{k,t-1}}\le2^{1-t}\Delta,\\
    \textstyle \sqrt\mu\|x^{k,t}-x^k\|
    &\textstyle\le\anorm{x^{k,t}-x^k}\le2\Delta.
\end{align*}
Note that $x^{k,t} = x^{k,t-1} - \nabla^2 \Phi_k(x^{k,t-1})^{-1}\nabla\Phi_k(x^{k,t-1})$. Consequently, applying \eqref{eq:gradient-taylor} gives
\begin{align*}
    \textstyle \|\nabla\Phi_k(x^{k,t})\|
    = \ &\textstyle\|\nabla\Phi_k(x^{k,t})-\nabla\Phi_k(x^{k,t-1})
    -\nabla^2\Phi_k(x^{k,t-1})(x^{k,t}-x^{k,t-1})\|\\
    \le \ &\textstyle\frac{L_H}{2}\|x^{k,t}-x^{k,t-1}\|^2 \\
    \le \ &\textstyle\frac{L_H\Delta^2}{2\mu}4^{1-t},
\end{align*}
which implies that $\|\nabla f(x^{k,t})\|\le\frac{L_H\Delta^2}{2\mu}4^{1-t}+4\sqrt\mu\Delta$. Finally, since \eqref{eq:gate} does not hold at $x^{k,t}$, then
\begin{align*}
    \textstyle \nabla^2f(x^{k,t}) \textstyle\succ\frac34\nabla^2f(x^k)-\frac{\mu}{4}I \succ-\mu I,
\end{align*}
which completes the proof.
\end{proof}

\begin{theorem}\label{thm:acc}
Under \Cref{ass:hessian,ass:third}, choose $\mu=\min\{(L_T\epsg^2)^{1/3},\epsH/2\}$ and $\Delta=\frac{\mu}{16\sqrt{L_T}}$. Then \Cref{alg:accppnewton} returns an $(\epsg,\epsH)$-approximate second-order stationary point using at most
\begin{align*}
    \widetilde\cO\big((f(x^0)-f_{\inf})\max\{L_T^{1/3}\epsg^{-4/3},L_T\epsH^{-2}\}\big)
\end{align*}
second-order oracle evaluations.
\end{theorem}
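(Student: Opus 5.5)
The proof mirrors that of \Cref{thm:baseline-complexity}, with an extra accounting for inner iterations. With $\Delta=\mu/(16\sqrt{L_T})$ we have $256L_T\Delta^2=\mu^2$, so the hypotheses of \Cref{lem:decrease,lem:initial-newton,lem:contraction,lem:small-newton} hold. I would classify each outer iteration $k$ into three types and show each either decreases $f$ by $\Omega(\Delta^2)=\Omega(\mu^2/L_T)$ or terminates.

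\textbf{Type 1:} \eqref{eq:gate} holds at $t=0$. Then \Cref{lem:decrease-initial} gives a decrease of at least $9\mu^2/(20L_T)\ge \frac{2}{5}\Delta^2$.

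\textbf{Type 2:} $\anorm{d^{k,0}}\ge\Delta$ (large step). Then \Cref{lem:initial-newton} gives a decrease of at least $\frac25\Delta^2$. This type uses at most two oracle calls: one at $x^k$ and one at $x^{k,1}$ to test \eqref{eq:gate}.

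\textbf{Type 3:} $\anorm{d^{k,0}}<\Delta$ (small step). By \Cref{lem:contraction}, every inner iterate satisfies $\anorm{x^{k,t}-x^k}\le2\Delta$. Hence if \eqref{eq:gate} fires at some $t\ge1$, \Cref{lem:decrease} gives a decrease of at least $\mu^2/(32L_T)=8\Delta^2$. Otherwise \Cref{lem:small-newton} applies at every inner iterate.

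For the stationarity certificate in Type 3, the plan is to plug in the parameters.
\begin{itemize}
\item \emph{Bias term.} Since $\mu\le (L_T\epsg^2)^{1/3}$, we have $4\sqrt\mu\Delta=\mu^{3/2}/(4\sqrt{L_T})\le \epsg/4$.
\item \emph{Hessian condition.} $\nabla^2 f(x^{k,t})\succ-\mu I\succeq -\frac{\epsH}{2}I\succ-\epsH I$.
\item \emph{Residual term.} $\frac{L_H\Delta^2}{2\mu}4^{1-t}=\frac{L_H\mu}{512L_T}4^{1-t}$. This is at most $\epsg/2$ once $t\ge 1+\log_4\!\big(\frac{L_H\mu}{256L_T\epsg}\big)_+$.
\end{itemize}
So after $T=\cO(\log(L_H\mu/(L_T\epsg))+1)=\widetilde\cO(1)$ inner steps the iterate $x^{k,t}$ is an $(\epsg,\epsH)$-approximate second-order stationary point. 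The algorithm returns it as soon as the criterion is met; I would make explicit that the algorithm checks $\|\nabla f\|\le\epsg$ and $\lambda_{\min}>-\epsH$ at each inner iterate. Thus each Type 3 iteration either terminates or exits through \eqref{eq:gate} with decrease $8\Delta^2$, within $\widetilde\cO(1)$ oracle calls.

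One subtlety is that a Type 3 iteration that neither terminates nor fires \eqref{eq:gate} cannot run past $T$: it must stop there with a stationary point. Hence every non-terminal outer iteration costs $\widetilde\cO(1)$ oracle calls and reduces $f$ by at least $\frac25\Delta^2$.

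Telescoping as in \Cref{thm:baseline-complexity}, the number of outer iterations is at most
\[
\frac{5(f(x^0)-f_{\inf})}{2\Delta^2}=\frac{640L_T(f(x^0)-f_{\inf})}{\mu^2}.
\]
With $\mu=\min\{(L_T\epsg^2)^{1/3},\epsH/2\}$ we get $L_T/\mu^2=\max\{L_T^{1/3}\epsg^{-4/3},4L_T\epsH^{-2}\}$. Multiplying by the $\widetilde\cO(1)$ inner cost gives the claimed bound.

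The main obstacle is the inner-loop count. The logarithm involves $L_H\mu/(L_T\epsg)$, so $L_H$ enters only logarithmically; this is what $\widetilde\cO$ absorbs. I would also verify that the stopping test is legitimately part of the algorithm, since \Cref{alg:accppnewton} as written has no explicit termination line.
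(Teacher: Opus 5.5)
Your proposal is correct and follows essentially the same route as the paper's proof: the same three-way case split using \Cref{lem:decrease-initial,lem:decrease,lem:initial-newton}, the same use of \Cref{lem:contraction,lem:small-newton} with $4\sqrt\mu\Delta\le\epsg/4$ to bound the inner loop logarithmically, and the same telescoping with $L_T/\mu^2=\max\{L_T^{1/3}\epsg^{-4/3},4L_T\epsH^{-2}\}$. One small correction: your count of ``at most two oracle calls'' in the large-step case is too low, since evaluating $\Omega_k$ at the candidates in \eqref{eq:comparison} requires gradients and Hessians at those points; this adds only a constant number of calls and does not change the bound.
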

\begin{proof}
With the choices of $\mu$ and $\Delta$, the negative-curvature direction update yields a decrease of at least $2\Delta^2/5$ by \Cref{lem:decrease-initial} and \Cref{lem:decrease}. \Cref{lem:initial-newton} gives a decrease of at least $2\Delta^2/5$ in the large-step case. In the small-step case, when \eqref{eq:gate} does not hold, \Cref{lem:small-newton} and the choices of $\mu$ and $\Delta$ give
\begin{align*}
    \textstyle \|\nabla f(x^{k,t})\| \le \frac{L_H\mu}{512L_T}4^{1-t}+\frac{\epsg}{4} \le \frac{\epsg}{512}+\frac{\epsg}{4} < \epsg,
\end{align*}
where the second inequality holds whenever $t\ge 1+\big\lceil\log_4(1+\tfrac{L_H\mu}{L_T\epsg})\big\rceil$. Consequently, in the small-step case, the inner iterates either enter the curvature branch or yield an $(\epsg,\epsH)$-approximate second-order stationary point within $1+\big\lceil\log_4(1+\tfrac{L_H\mu}{L_T\epsg})\big\rceil$ Newton steps. After $K$ outer updates, we have
\begin{align*}
    \textstyle \frac{2K\Delta^2}{5} \le \sum_{k=0}^{K-1}(f(x^k)-f(x^{k+1})) = f(x^0)-f(x^K) \le f(x^0)-f_{\inf}.
\end{align*}
Therefore, there are at most $\lfloor 5(f(x^0)-f_{\inf})/(2\Delta^2)\rfloor$ outer iterations, and each inner loop uses at most $1+\big\lceil\log_4(1+\frac{L_H\mu}{L_T\epsg})\big\rceil$ second-order oracle evaluations. Finally, substituting $\Delta^2=\mu^2/(256L_T)$ and $L_T/\mu^2=\max\{L_T^{1/3}\epsg^{-4/3},4L_T\epsH^{-2}\}$ completes the proof.
\end{proof}

\section{Accelerated Proximal Point Newton--CG Method}\label{sec:binary-cg}
The previous sections establish the proximal point Newton framework and its accelerated convergence guarantees. However, the accelerated method relies on exact eigenvalue computations and Newton system solves, as well as prior knowledge of smoothness parameters. We now address these computational limitations by replacing the exact matrix operations with Newton–CG and randomized Lanczos procedures, and by selecting the regularization parameters adaptively. Specifically, we implement \Cref{alg:accppnewton} using capped CG and the Lanczos method, following the Newton--CG method of \cite{royeroneillwright2018}, and term our method the accelerated  Proximal Point Newton--CG Method. The proximal subproblem and relative negative-curvature detection remain those of \Cref{sec:accelerated}. We use capped CG to replace the exact Newton solve and a minimum eigenvalue oracle based on the Lanczos method to replace the eigenvalue computation. The two procedures and their guarantees are given in \Cref{app:capped-cg,app:meo}. The complete algorithm is presented in \Cref{alg:accppncg}.

\begin{algorithm}[ht]
\caption{Accelerated Proximal Point Newton--CG Method}\label{alg:accppncg}
\begin{algorithmic}[1]
\Require accuracies $\epsg>0$ and $\epsH>0$, Lipschitz constant estimates $L_{\rm in}\ge L_{\min}>0$.
\State \textbf{Initialization:} initial point $x^0\in\bR^d$, outer iteration count $k=0$, $L=L_{\rm in}$.\label{line:ncg-initialize}
\While{no stationary point has been returned}
  \State Initialize the parameters $\mu,\Delta,\Phi_k,x^{k,0}$ according to \eqref{eq:binary-radius-choice} for the inner loop.\label{line:ncg-parameters}
  \For{$t=0,1,2,\ldots$}\label{line:ncg-inner}
    \State Apply capped CG (\Cref{procedure:capped-cg}) with $\mu$ and \eqref{eq:ncg-cg-call}.\label{line:ncg-cg}
    \If{\texttt{NC} is returned}\label{line:ncg-cg-nc}
      \State Normalize the output to $u^{k,t}$, set $s^{k,t}=1$, and go to \textbf{Curvature update}.
    \EndIf
    \State Set $d^{k,t}$ to the \texttt{SOL} output.\label{line:ncg-sol}
    \State Apply minimum eigenvalue oracle (\Cref{procedure:meo}) with accuracy $\mu/16$ to \eqref{eq:ncg-meo-call}.\label{line:ncg-meo}
    \If{\texttt{NC} is returned}\label{line:ncg-meo-nc}
      \State Retain $u^{k,t},s^{k,t}$ and go to \textbf{Curvature update}.
    \EndIf
    \If{$t=1$ and $\anorm{x^{k,1}-x^k}=\Delta$}\label{line:ncg-large}
      \State Update $\tilde{x}=x^{k,1}$ and \textbf{break}.
    \EndIf
    \If{$t\ge1$ and \eqref{eq:binary-contract} fails}\label{line:ncg-contract}
      \State Reject the inner loop and \textbf{break}.
    \EndIf
    \State Set the stepsize $\eta_{k,t}=\min\{1,\Delta/\anorm{d^{k,t}}\}$, update $x^{k,t+1}=x^{k,t}-\eta_{k,t}d^{k,t}$, and \textbf{continue}.\label{line:ncg-step}
    \State \textbf{Curvature update:} update $\tilde{x}$ by \eqref{eq:binary-candidates} and \textbf{break}.\label{line:ncg-curvature}
  \EndFor
  \If{$\tilde{x}$ is updated in the inner loop and \eqref{eq:binary-curvature} holds}\label{line:ncg-acceptance}
    \State Set $x^{k+1} = \tilde{x}$, $k=k+1$ and $L=\max\{L_{\min},L/2\}$.\label{line:ncg-accept}
  \Else
    \State Reject the inner loop; retain $x^k$ and set $L=2L$.\label{line:ncg-reject}
  \EndIf
\EndWhile
\end{algorithmic}
\end{algorithm}

Lines~\ref{line:ncg-initialize}--\ref{line:ncg-inner} initialize the adaptive parameters and start an inner loop at the current outer iterate $x^k$. With the current Lipschitz constant estimate $L\ge L_{\min}>0$, initialized as $L=L_{\rm in}$, we set the regularization parameter, radius, and associated proximal function as follows
\begin{equation}\label{eq:binary-radius-choice}
    \textstyle\mu=\min\{(L\epsg^2)^{1/3},\epsH/2\},\ \Delta=\frac{\mu}{16\sqrt L},\ \Phi_k(x)=f(x)+\mu\|x-x^k\|^2,\ x^{k,0}=x^k.
\end{equation}
These quantities remain fixed throughout the inner loop. At $x^{k,t}$, line~\ref{line:ncg-cg} applies capped CG with regularization parameter $\mu$ and
\begin{equation}\label{eq:ncg-cg-call}
    \textstyle H=\nabla^2f(x^{k,t}),\ g=-\nabla\Phi_k(x^{k,t}),\ \zeta=\frac{1}{32}.
\end{equation}
If it returns \texttt{NC}, we normalize its output to $u^{k,t}$, set $s^{k,t}=1$, and proceed to the curvature update. In this case, the normalized \texttt{NC} output of capped CG satisfies
\begin{equation}\label{eq:binary-nc}
    \textstyle\|u^{k,t}\|=1,\ \inner{\nabla^2f(x^{k,t})u^{k,t}}{u^{k,t}}<-\mu.
\end{equation}
Otherwise, we retain the \texttt{SOL} output as the inexact Newton direction $d^{k,t}$, which satisfies
\begin{equation}\label{eq:binary-sol}
\begin{aligned}
    \textstyle\|\nabla^2\Phi_k(x^{k,t})d^{k,t}-\nabla\Phi_k(x^{k,t})\|&\textstyle\le\frac{1}{64}\min\{\mu\|d^{k,t}\|,\|\nabla\Phi_k(x^{k,t})\|\},\\
    \textstyle\inner{\nabla^2\Phi_k(x^{k,t})d^{k,t}}{d^{k,t}}&\textstyle\ge\mu\|d^{k,t}\|^2
\end{aligned}
\end{equation}
by \Cref{lem:ccg-interface} and \eqref{eq:ncg-cg-call}. After a $\texttt{SOL}$ output, we first apply the minimum eigenvalue oracle with accuracy $\mu/16$ to the two matrices
\begin{equation}\label{eq:ncg-meo-call}
    \textstyle H=\nabla^2f(x^k)+\frac{29}{32}\mu I+4s\big(\nabla^2f(x^{k,t})-\nabla^2f(x^k)\big),\ s\in\{-1,1\}
\end{equation}
to detect a relative negative-curvature direction. If the minimum eigenvalue oracle returns \texttt{NC} for some $s\in\{-1,1\}$, we set $s^{k,t}=s$, let $u^{k,t}$ be the returned unit negative-curvature vector, and proceed to the curvature update, as prescribed by line~\ref{line:ncg-meo-nc}. In this case, we have
\begin{equation}\label{eq:ncg-gate}
    \textstyle\inner{\big(\nabla^2f(x^k)+\mu I+4s^{k,t}(\nabla^2f(x^{k,t})-\nabla^2f(x^k))\big)u^{k,t}}{u^{k,t}}\le\mu/16.
\end{equation}
We remark that a capped-CG \texttt{NC} output at $t\ge1$ also satisfies \eqref{eq:ncg-gate} with $s^{k,t}=1$, since $\nabla^2f(x^k)\succ-\mu I$. If no negative curvature is returned, i.e., when both signs return \texttt{CERT}, then each matrix in \eqref{eq:ncg-meo-call} has a smallest eigenvalue of at least $-\mu/16$, which implies
\begin{equation}\label{eq:ncg-certificate}
    \textstyle\lambda_{\min}\big(\nabla^2f(x^k)+\mu I+4s(\nabla^2f(x^{k,t})-\nabla^2f(x^k))\big)\ge\mu/32,\ s\in{-1,1}.
\end{equation}
Consequently, \eqref{eq:relative-reserve} and \eqref{eq:variation-norm-bound} hold, and $\nabla^2f(x^{k,t})\succ-\mu I$. At $t=1$, we then check whether the first Newton step reaches the fixed radius and, if so, introduce an auxiliary point $\tilde{x}=x^{k,1}$. Otherwise, line~\ref{line:ncg-contract} allows the small-step inner loop to continue only when
\begin{equation}\label{eq:binary-contract}
    \textstyle\anorm{d^{k,t}}\le\frac{1}{2}\anorm{x^{k,t}-x^{k,t-1}},\ \forall t\ge1.
\end{equation}
If this condition fails, the inner loop is rejected. At $t=0$, or when the condition holds, we apply the inexact Newton direction, update $x^{k,t+1}$, and continue the inner loop. Whenever either procedure returns \texttt{NC}, define the auxiliary point
\begin{equation}\label{eq:binary-candidates}
 \textstyle \tilde{x}\in
 \begin{cases}
 \argmin_{x\in\{x^k\pm\sqrt{\mu/L}\,u^{k,0}\}}f(x),&t=0,\\
 \argmin_{x\in\{x^k-4s^{k,t}(x^{k,t}-x^k),\,x^k+4s^{k,t}(x^{k,t}-x^k)\pm\sqrt{\mu/L}\,u^{k,t}\}}f(x),&t\ge1.
 \end{cases}
\end{equation}
Compared with the previous two algorithms, an auxiliary point $\tilde{x}$ is introduced. We update the outer iterate only when an auxiliary point $\tilde{x}$ has been produced and the following sufficient decrease in the function value is obtained:
\begin{equation}\label{eq:binary-curvature}
    \textstyle f(\tilde{x}) - f(x^k)\le-\frac{\Delta^2}{4}.
\end{equation}
In this case, we update the outer iterate $x^{k+1}=\tilde{x}$ and set $L=\max\{L_{\min},L/2\}$. Retain the iterate $x^k$ and double $L$. The next inner loop then restarts from $x^k$. As we will show later, when the Lipschitz constant estimate satisfies $L\ge L_T$, rejection cannot occur, and every inner loop terminates within a logarithmic number of iterations. We now provide the complexity analysis of \Cref{alg:accppncg}.

\begin{lemma}\label{lem:binary-initial-curvature}
Under \Cref{ass:third}, suppose $L\ge L_T$. If either procedure returns \texttt{NC} at $x^{k,0}=x^k$, then the auxiliary point $\tilde{x}$ in \eqref{eq:binary-candidates} satisfies
\begin{equation*}
    f(\tilde{x})-f(x^k)\le-\frac{67\mu^2}{160L}.
\end{equation*}
\end{lemma}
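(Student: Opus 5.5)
The plan is a symmetric two-point argument, as in \Cref{lem:decrease-initial}, but with $f$ bounded through the fourth-order Taylor bound \eqref{eq:tensor-function-taylor} rather than through $\Omega_k$. This is because the candidate in \eqref{eq:binary-candidates} is selected by comparing the true values of $f$.

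First, I extract a curvature bound for $u\coloneqq u^{k,0}$ that holds for either type of \texttt{NC} output at $t=0$.
\begin{itemize}
\item If capped CG returns \texttt{NC}, then \eqref{eq:binary-nc} gives $\inner{\nabla^2 f(x^k)u}{u}<-\mu$.
\item If the minimum eigenvalue oracle returns \texttt{NC}, then $x^{k,0}=x^k$, so the term $4s(\nabla^2 f(x^{k,0})-\nabla^2 f(x^k))$ vanishes. In that case \eqref{eq:ncg-gate} reads $\inner{(\nabla^2 f(x^k)+\mu I)u}{u}\le\mu/16$, that is, $\inner{\nabla^2 f(x^k)u}{u}\le-\tfrac{15}{16}\mu$.
\end{itemize}
In both cases $\inner{\nabla^2 f(x^k)u}{u}\le-\tfrac{15}{16}\mu$ and $\|u\|=1$.

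Next, set $r=\sqrt{\mu/L}$. Since $\tilde x$ minimizes $f$ over $\{x^k\pm r u\}$, we have
\[
f(\tilde x)\le\tfrac12\big(f(x^k+ru)+f(x^k-ru)\big).
\]
Apply \eqref{eq:tensor-function-taylor} at $x=x^k$ with $y=x^k\pm ru$ and average the two inequalities. The gradient term and the cubic term $\tfrac16\nabla^3 f(x^k)[\pm ru]^{\otimes 3}$ are odd in the sign, so they cancel. What remains is
\[
f(\tilde x)-f(x^k)\le\tfrac{r^2}{2}\inner{\nabla^2 f(x^k)u}{u}+\tfrac{L_T}{20}r^4.
\]

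Finally, substitute the curvature bound, $r^2=\mu/L$, and $L_T\le L$:
\[
f(\tilde x)-f(x^k)\le-\tfrac{15\mu^2}{32L}+\tfrac{\mu^2}{20L}=-\tfrac{75-8}{160}\cdot\tfrac{\mu^2}{L}=-\tfrac{67\mu^2}{160L}.
\]
This is the claimed bound.

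The only delicate point is the first step: the weaker of the two certificates, namely the oracle's $\mu/16$ slack, determines the constant $15/16$. One also has to note that at $t=0$ the relative term in \eqref{eq:ncg-meo-call} disappears, so both signs $s=\pm1$ give the same matrix. Everything else is a direct computation.
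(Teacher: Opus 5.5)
Your proposal is correct and follows essentially the same route as the paper. You average the two candidates in \eqref{eq:binary-candidates}, apply \eqref{eq:tensor-function-taylor} so the odd-order terms cancel, and combine $\inner{\nabla^2 f(x^k)u^{k,0}}{u^{k,0}}\le-\tfrac{15}{16}\mu$ (from \eqref{eq:binary-nc} or \eqref{eq:ncg-gate}) with $L_T\le L$; your remark that the relative Hessian term vanishes at $t=0$ makes explicit a step the paper leaves implicit.
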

\begin{proof}
By the definition of $\tilde{x}$ in \eqref{eq:binary-candidates}, we have
\begin{align*}
    \textstyle f(\tilde{x})-f(x^k)\le\ &\textstyle\frac12\big(f(x^k+\sqrt{\mu/L}\,u^{k,0})+f(x^k-\sqrt{\mu/L}\,u^{k,0})\big)-f(x^k)\\
    \le\ &\textstyle\frac{\mu}{2L}\inner{\nabla^2f(x^k)u^{k,0}}{u^{k,0}}+\frac{\mu^2}{20L},
\end{align*}
where the last inequality follows from \eqref{eq:tensor-function-taylor}, $\|u^{k,0}\|=1$, and $L_T\le L$. By either \eqref{eq:binary-nc} or \eqref{eq:ncg-gate}, we have $\inner{\nabla^2f(x^k)u^{k,0}}{u^{k,0}}\le-15\mu/16$. Consequently, it follows that
\begin{align*}
    \textstyle f(\tilde{x})-f(x^k)\le-\frac{15\mu^2}{32L}+\frac{\mu^2}{20L}=-\frac{67\mu^2}{160L},
\end{align*}
which completes the proof.
\end{proof}

\begin{lemma}\label{lem:binary-relative-curvature}
Under \Cref{ass:third}, suppose $L\ge L_T$ and $\nabla^2\Phi_k(x^k)\succ\mu I$. If either procedure returns \texttt{NC} at an inner iterate $x^{k,t}$ with $t\ge1$ and $\anorm{x^{k,t}-x^k}\le2\Delta$, then the auxiliary point $\tilde{x}$ in \eqref{eq:binary-candidates} satisfies
\begin{equation*}
    f(\tilde{x})-f(x^k)\le-\frac{27\mu^2}{320L}.
\end{equation*}
\end{lemma}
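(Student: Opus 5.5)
The plan is to mirror the proof of \Cref{lem:decrease}, with two changes. The comparison now uses $f$ directly through \eqref{eq:tensor-function-taylor}, with $L_T\le L$, instead of the tensor-free model $\Omega_k$. The curvature certificate is also weaker: \eqref{eq:ncg-gate} gives the bound $\mu/16$ rather than $0$. Write $h=x^{k,t}-x^k$, $s=s^{k,t}$, $u=u^{k,t}$ and $r=\sqrt{\mu/L}$.

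First I will fix the certificate. If capped CG returned \texttt{NC} at $t\ge1$, the remark after \eqref{eq:ncg-gate} shows that \eqref{eq:ncg-gate} holds with $s=1$. This uses $\nabla^2 f(x^k)\succ-\mu I$, which follows from the hypothesis $\nabla^2\Phi_k(x^k)\succ\mu I$. So in every case
\begin{equation*}
\inner{(\nabla^2 f(x^k)+\mu I+4s(\nabla^2 f(x^{k,t})-\nabla^2 f(x^k)))u}{u}\le\mu/16 .
\end{equation*}
Applying \eqref{eq:taylor} to the Hessian difference then gives
\begin{equation*}
\inner{\nabla^2 f(x^k)u}{u}+4s\nabla^3 f(x^k)[h,u,u]\le-\tfrac{15\mu}{16}+2L\|h\|^2 .
\end{equation*}

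Next I bound $f(\tilde x)-f(x^k)$ by the weighted average with weights $\tfrac12$ at $x^k-4sh$ and $\tfrac14$ at each of $x^k+4sh\pm ru$. Each value is expanded by \eqref{eq:tensor-function-taylor} with remainder $\tfrac{L}{20}\|\cdot\|^4$.
\begin{itemize}
\item The gradient terms cancel.
\item The quadratic terms give $8\inner{\nabla^2 f(x^k)h}{h}+\tfrac{r^2}{4}\inner{\nabla^2 f(x^k)u}{u}$.
\item The cubic terms leave only the cross term $\tfrac{r^2}{4}\cdot 4s\nabla^3 f(x^k)[h,u,u]$, because the $\pm 4sh$ pure cubes cancel between the weights $\tfrac12$ and $\tfrac14+\tfrac14$, and the $\pm ru$ odd terms cancel.
\item The quartic remainders are bounded by the same identity as in \Cref{lem:decrease}: $\tfrac{L}{20}\bigl(256\|h\|^4+48\tfrac{\mu}{L}\|h\|^2+\tfrac{\mu^2}{2L^2}\bigr)$.
\end{itemize}

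Combining these with the certificate and $r^2=\mu/L$ gives
\begin{equation*}
f(\tilde x)-f(x^k)\le 8\anorm{h}^2-16\mu\|h\|^2+\tfrac{\mu}{2}\|h\|^2+\tfrac{12\mu}{5}\|h\|^2+\tfrac{64L}{5}\|h\|^4-\tfrac{15\mu^2}{64L}+\tfrac{\mu^2}{40L}.
\end{equation*}
Here I used $\inner{\nabla^2 f(x^k)h}{h}=\anorm{h}^2-2\mu\|h\|^2$. The last two constants combine to $-\tfrac{67\mu^2}{320L}$.

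Then I use $\anorm{h}\le2\Delta$ and $\mu\|h\|^2\le\anorm{h}^2$, so $\|h\|^2\le4\Delta^2/\mu$. With $\Delta=\mu/(16\sqrt L)$ from \eqref{eq:binary-radius-choice}, this gives $256L\Delta^2=\mu^2$, hence $\tfrac{64L}{5}\|h\|^4\le\tfrac{\mu}{5}\|h\|^2$. All the $\|h\|^2$ terms then have total coefficient $\mu(-16+\tfrac12+\tfrac{12}{5}+\tfrac15)<0$, so they can be dropped. What remains is $8\anorm{h}^2\le32\Delta^2=\tfrac{\mu^2}{8L}=\tfrac{40\mu^2}{320L}$. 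Therefore
\begin{equation*}
f(\tilde x)-f(x^k)\le\tfrac{40-67}{320}\tfrac{\mu^2}{L}=-\tfrac{27\mu^2}{320L},
\end{equation*}
which is exactly the claimed constant.

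The main obstacle is the cubic bookkeeping in the weighted average. I need to confirm that the pure cubes $\nabla^3 f(x^k)[h]^{\otimes3}$ cancel, which they do because of the weights $\tfrac12,\tfrac14,\tfrac14$ and the signs $\mp4sh$. I also need to confirm that the only surviving third-order term is the mixed one, with coefficient $\tfrac{r^2}{4}\cdot3\cdot4s\cdot\tfrac{2}{6}$, where the factor $\tfrac26$ comes from summing the two $\pm ru$ points. This should equal $s\,r^2\nabla^3 f(x^k)[h,u,u]$, matching the $\tfrac{r^2}{4}\cdot4s$ pairing used in the certificate bound. A careful recomputation is needed here, because a factor discrepancy would change the final constant.
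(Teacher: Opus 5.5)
Your proposal is correct and matches the paper's proof step for step: the $\tfrac12,\tfrac14,\tfrac14$ average of $f$ over the three candidates, expansion by \eqref{eq:tensor-function-taylor} with $L_T\le L$, the quartic identity from \Cref{lem:decrease}, the certificate \eqref{eq:ncg-gate} combined with \eqref{eq:taylor}, and finally $8\anorm{h}^2\le 32\Delta^2=\mu^2/(8L)$, which gives $-\tfrac{27\mu^2}{320L}$. Your cubic bookkeeping is right, since the surviving term is exactly $s r^2\nabla^3 f(x^k)[h,u,u]$; your explicit check that a capped-CG \texttt{NC} output satisfies \eqref{eq:ncg-gate} with $s=1$ also fills in a step the paper only states in passing.
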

\begin{proof}
By the definition of $\tilde{x}$ in \eqref{eq:binary-candidates}, we have
\begin{align*}
    \textstyle f(\tilde{x})-f(x^k)\le \ &\textstyle\frac12f(x^k-4s^{k,t}(x^{k,t}-x^k))+\frac14f(x^k+4s^{k,t}(x^{k,t}-x^k)+\sqrt{\mu/L}\,u^{k,t})\\
    &\textstyle+\frac14f(x^k+4s^{k,t}(x^{k,t}-x^k)-\sqrt{\mu/L}\,u^{k,t})-f(x^k).
\end{align*}
By applying \eqref{eq:tensor-function-taylor} and bounding the quartic terms as in the proof of \Cref{lem:decrease}, it follows that
\begin{align*}
    \textstyle f(\tilde{x})-f(x^k)\le\ &\textstyle8\anorm{x^{k,t}-x^k}^2-\frac{68\mu}{5}\|x^{k,t}-x^k\|^2+\frac{64L}{5}\|x^{k,t}-x^k\|^4+\frac{\mu^2}{40L}\\
    &\textstyle+\frac{\mu}{4L}\big(\inner{\nabla^2f(x^k)u^{k,t}}{u^{k,t}}+4s^{k,t}\nabla^3f(x^k)[x^{k,t}-x^k,u^{k,t},u^{k,t}]\big).
\end{align*}
Since either procedure returns \texttt{NC}, \eqref{eq:ncg-gate} holds. By \eqref{eq:taylor} and $\|u^{k,t}\|=1$, we have
\begin{align*}
    \textstyle\inner{\nabla^2f(x^k)u^{k,t}}{u^{k,t}}+4s^{k,t}\nabla^3f(x^k)[x^{k,t}-x^k,u^{k,t},u^{k,t}]\le-\frac{15\mu}{16}+2L_T\|x^{k,t}-x^k\|^2.
\end{align*}
Combining the above inequalities and using $L_T\le L$ gives
\begin{align*}
    \textstyle f(\tilde{x})-f(x^k)\le8\anorm{x^{k,t}-x^k}^2-\frac{131\mu}{10}\|x^{k,t}-x^k\|^2+\frac{64L}{5}\|x^{k,t}-x^k\|^4-\frac{67\mu^2}{320L}.
\end{align*}
By \eqref{eq:binary-radius-choice}, we have $256L\Delta^2=\mu^2$. Since $\nabla^2\Phi_k(x^k)\succ\mu I$ and $\anorm{x^{k,t}-x^k}\le2\Delta$, it follows that
\begin{align*}
    \textstyle\frac{64L}{5}\|x^{k,t}-x^k\|^4\le\frac{256L\Delta^2}{5\mu}\|x^{k,t}-x^k\|^2=\frac\mu5\|x^{k,t}-x^k\|^2.
\end{align*}
Therefore, we obtain
\begin{align*}
    \textstyle f(\tilde{x})-f(x^k)\le32\Delta^2-\frac{67\mu^2}{320L}=-\frac{27\mu^2}{320L},
\end{align*}
which completes the proof.
\end{proof}

\begin{lemma}[Large-step case]\label{lem:binary-initial-newton}
Under \Cref{ass:hessian,ass:third}, suppose \eqref{eq:ncg-certificate} holds whenever both signs return \texttt{CERT} in the inner loop and the first Newton step satisfies $\anorm{d^{k,0}}\ge\Delta$. Then the inner loop ends no later than $t=1$. If $L\ge L_T$, the inner loop is not rejected, and any auxiliary point $\tilde{x}$ produced satisfies
\begin{equation*}
    f(\tilde{x})-f(x^k)\le-\frac{\Delta^2}{4}.
\end{equation*}
\end{lemma}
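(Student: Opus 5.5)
We argue by tracing the control flow of \Cref{alg:accppncg} through $t=0$ and $t=1$, and then bound the decrease in each way the inner loop can terminate. The hypothesis $\anorm{d^{k,0}}\ge\Delta$ presupposes that at $t=0$ capped CG returned \texttt{SOL}. It also presupposes that both signs of the minimum eigenvalue oracle returned \texttt{CERT}, since otherwise the loop breaks at $t=0$ and $d^{k,0}$ is never used. Hence \eqref{eq:ncg-certificate} holds at $t=0$. With $x^{k,0}=x^k$, this reads $\lambda_{\min}(\nabla^2 f(x^k)+\mu I)\ge\mu/32$, so $\nabla^2\Phi_k(x^k)\succ\mu I$ and the local norm is well defined.

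\textbf{Control flow.} The stepsize is $\eta_{k,0}=\Delta/\anorm{d^{k,0}}\le 1$, so $\anorm{x^{k,1}-x^k}=\Delta$. At $t=1$ there are three possibilities, each of which breaks:
\begin{itemize}
\item capped CG returns \texttt{NC};
\item the oracle returns \texttt{NC} for some sign;
\item both return certificates, in which case line~\ref{line:ncg-large} fires because $\anorm{x^{k,1}-x^k}=\Delta$.
\end{itemize}
Line~\ref{line:ncg-large} precedes the contraction test on line~\ref{line:ncg-contract}, so the rejection on line~\ref{line:ncg-contract} is never reached. Thus the loop ends by $t=1$, and an auxiliary point $\tilde x$ is always produced. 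It then suffices to show \eqref{eq:binary-curvature} when $L\ge L_T$, since the only remaining route to rejection is the failure of \eqref{eq:binary-curvature}.

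\textbf{Curvature branches.} If \texttt{NC} is returned at $t=0$, \Cref{lem:binary-initial-curvature} applies. If \texttt{NC} is returned at $t=1$, \Cref{lem:binary-relative-curvature} applies, using $\nabla^2\Phi_k(x^k)\succ\mu I$ and $\anorm{x^{k,1}-x^k}=\Delta\le 2\Delta$. Since $\mu^2=256L\Delta^2$, these lemmas give decreases of $\frac{67\cdot256}{160}\Delta^2$ and $\frac{27\cdot 256}{320}\Delta^2$, respectively. Both far exceed $\Delta^2/4$.

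\textbf{Certificate branch at $t=1$.} Here $\tilde x=x^{k,1}$, and I would repeat the proof of \Cref{lem:initial-newton} with the inexact direction. First, write $f(x^{k,1})-f(x^k)=\Phi_k(x^{k,1})-\Phi_k(x^k)-\mu\|x^{k,1}-x^k\|^2$ and expand via the integral form. Applying \eqref{eq:chord} gives the weights $\tfrac13\nabla^2\Phi_k(x^k)$ and $\tfrac16\nabla^2\Phi_k(x^{k,1})$, plus the error term $\tfrac{L_T}{24}\|x^{k,1}-x^k\|^4$. Second, since \eqref{eq:ncg-certificate} holds at $x^{k,1}$, \eqref{eq:relative-reserve} bounds the $\nabla^2\Phi_k(x^{k,1})$ term by $\tfrac{5}{24}\anorm{x^{k,1}-x^k}^2$.

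The one genuinely new step is the linear term, because $d^{k,0}$ now solves the Newton system only approximately. Let $r=\nabla^2\Phi_k(x^k)d^{k,0}-\nabla\Phi_k(x^k)$. By \eqref{eq:binary-sol}, $\|r\|\le\frac{\mu}{64}\|d^{k,0}\|$. Hence
\begin{equation*}
\inner{\nabla\Phi_k(x^k)}{d^{k,0}}=\anorm{d^{k,0}}^2-\inner{r}{d^{k,0}}\ge\anorm{d^{k,0}}^2-\tfrac{\mu}{64}\|d^{k,0}\|^2\ge\tfrac{63}{64}\anorm{d^{k,0}}^2,
\end{equation*}
where the last step uses $\mu\|u\|^2\le\anorm{u}^2$. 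Multiplying by $-\eta_{k,0}$ and using $\eta_{k,0}\le1$ gives
\begin{equation*}
\inner{\nabla\Phi_k(x^k)}{x^{k,1}-x^k}\le-\tfrac{63}{64}\Delta^2.
\end{equation*}

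Collecting the pieces, we obtain
\begin{equation*}
f(x^{k,1})-f(x^k)\le-\big(\tfrac{63}{64}-\tfrac13-\tfrac{5}{24}\big)\Delta^2-\mu\|x^{k,1}-x^k\|^2+\tfrac{L_T}{24}\|x^{k,1}-x^k\|^4.
\end{equation*}
Using $\|x^{k,1}-x^k\|^2\le\Delta^2/\mu$, $L_T\le L$ and $256L\Delta^2=\mu^2$, the quartic term is at most $\Delta^2/(24\cdot256)$. The bound is therefore roughly $-0.44\Delta^2\le-\Delta^2/4$.

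I expect the main obstacle to be this inexact inner-product estimate, together with confirming that \eqref{eq:relative-reserve} remains valid under the weaker certificate \eqref{eq:ncg-certificate}. The remaining steps are bookkeeping on the algorithm's branch order.
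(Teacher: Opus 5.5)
Your proposal is correct and follows essentially the same route as the paper. The \texttt{NC} branch at $t=1$ is handled by \Cref{lem:binary-relative-curvature}. In the certificate branch you use the same chord expansion with weights $\tfrac13$ and $\tfrac16$, the upper bound from \eqref{eq:relative-reserve}, the inexact-Newton estimate $\inner{\nabla\Phi_k(x^k)}{x^{k,1}-x^k}\le-\tfrac{63}{64}\Delta^2$, and the quartic bound $\Delta^2/6144$.

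The differences are cosmetic:
\begin{itemize}
\item You bound the residual term by Euclidean Cauchy--Schwarz rather than through the dual local norm $\|\nabla^2\Phi_k(x^k)^{-1/2}r\|$.
\item You keep the nonpositive $-\mu\|x^{k,1}-x^k\|^2$ term instead of dropping it.
\item You also dispatch a $t=0$ \texttt{NC} exit via \Cref{lem:binary-initial-curvature}.
\end{itemize}
Your claim that $\tilde x$ is always produced overlooks the stationarity test, but that case is vacuous for the lemma.
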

\begin{proof}
By \Cref{alg:accppncg}, the inner loop ends no later than $t=1$. It remains to prove the sufficient decrease when $L\ge L_T$. If either procedure returns \texttt{NC} at $x^{k,1}$, then $\anorm{x^{k,1}-x^k}=\Delta\le2\Delta$, and \Cref{lem:binary-relative-curvature} gives
\begin{align*}
    \textstyle f(\tilde{x})-f(x^k)\le-\frac{27\mu^2}{320L}\le-\frac{\Delta^2}{4},
\end{align*}
where the last inequality follows from $256L\Delta^2=\mu^2$. Otherwise, both signs return \texttt{CERT} and $\tilde{x}=x^{k,1}$, unless a stationary point is returned. By the definition of the proximal function and \eqref{eq:chord}, we have
\begin{align*}
    &\textstyle f(x^{k,1})-f(x^k)\\
    \le \ &\textstyle\Phi_k(x^{k,1})-\Phi_k(x^k)\\
    \le\ &\textstyle\inner{\nabla\Phi_k(x^k)}{x^{k,1}-x^k}+\frac13\anorm{x^{k,1}-x^k}^2+\frac16\inner{\nabla^2\Phi_k(x^{k,1})(x^{k,1}-x^k)}{x^{k,1}-x^k}+\frac{L_T}{24}\|x^{k,1}-x^k\|^4.
\end{align*}
We first bound the inner product term $\inner{\nabla\Phi_k(x^k)}{x^{k,1}-x^k}$. For every computed Newton direction, \eqref{eq:binary-sol} gives
\begin{equation*}
    \textstyle\|\nabla^2\Phi_k(x^k)^{-1/2}(\nabla^2\Phi_k(x^{k,t})d^{k,t}-\nabla\Phi_k(x^{k,t}))\| \le \frac1{\sqrt\mu}\|\nabla^2\Phi_k(x^{k,t})d^{k,t}-\nabla\Phi_k(x^{k,t})\| \le \textstyle \frac{\sqrt\mu}{64}\|d^{k,t}\|.
\end{equation*}
We thus have
\begin{equation}\label{eq:binary-dual}
\begin{aligned}
    \textstyle\|\nabla^2\Phi_k(x^k)^{-1/2}(\nabla^2\Phi_k(x^{k,t})d^{k,t}-\nabla\Phi_k(x^{k,t}))\| \le \frac1{64}\anorm{d^{k,t}}
\end{aligned}
\end{equation}
since $\nabla^2\Phi_k(x^k)\succ\mu I$,
By $x^{k,1}=x^k-\eta_{k,0}d^{k,0}$ and \eqref{eq:binary-dual} at $t=0$, we obtain
\begin{align*}
    \textstyle\inner{\nabla\Phi_k(x^k)}{x^{k,1}-x^k}=\ &\textstyle-\frac1{\eta_{k,0}}\anorm{x^{k,1}-x^k}^2-\inner{\nabla^2\Phi_k(x^k)d^{k,0}-\nabla\Phi_k(x^k)}{x^{k,1}-x^k}\\
    \le\ &\textstyle-\frac{63}{64\eta_{k,0}}\anorm{x^{k,1}-x^k}^2 \\
    \le \ &\textstyle -\frac{63}{64}\anorm{x^{k,1}-x^k}^2,
\end{align*}
where the last inequality uses $\eta_{k,0}\le1$. Moreover, \eqref{eq:relative-reserve} gives
\begin{align*}
    \textstyle\frac16\inner{\nabla^2\Phi_k(x^{k,1})(x^{k,1}-x^k)}{x^{k,1}-x^k}\le\frac5{24}\anorm{x^{k,1}-x^k}^2.
\end{align*}
Finally, since $\anorm{x^{k,1}-x^k}=\Delta$ and $\nabla^2\Phi_k(x^k)\succ\mu I$, the choice \eqref{eq:binary-radius-choice} gives
\begin{align*}
    \textstyle\frac{L_T}{24}\|x^{k,1}-x^k\|^4\le\frac{L_T\Delta^2}{24\mu^2}\anorm{x^{k,1}-x^k}^2\le\frac1{6144}\anorm{x^{k,1}-x^k}^2.
\end{align*}
Combining the above inequalities yields
\begin{align*}
    \textstyle f(x^{k,1})-f(x^k)\le-\frac{\Delta^2}{4}.
\end{align*}
Thus, \eqref{eq:binary-curvature} holds in either case, which completes the proof.
\end{proof}

In the following, we focus on the small-step case. We first show that every subsequent inner step takes a full Newton update and the inner iterates remain within the fixed radius required in \Cref{lem:binary-relative-curvature}.
\begin{lemma}[Small-step case: contraction]\label{lem:binary-refinement}
Under \Cref{ass:hessian,ass:third}, suppose \eqref{eq:ncg-certificate} holds whenever both signs return \texttt{CERT} in the inner loop and the first Newton step satisfies $\anorm{d^{k,0}}<\Delta$. Then every reached inner iterate $x^{k,t}$ with $t\ge1$ satisfies
\begin{equation}\label{eq:binary-confine}
    \anorm{x^{k,t}-x^{k,t-1}}\le2^{1-t}\Delta,\ \anorm{x^{k,t}-x^k}\le2\Delta,\ \|x^{k,t}-x^k\|^2\le\frac{4\Delta^2}{\mu}.
\end{equation}
Every Newton update taken in this inner loop is a full step. If $L\ge L_T$, then \eqref{eq:binary-contract} holds whenever both signs return \texttt{CERT} at $x^{k,t}$ with $t\ge1$.
\end{lemma}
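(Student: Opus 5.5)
The first three claims in \eqref{eq:binary-confine} come from the algorithm's own safeguards, much as in \Cref{lem:contraction}. The last claim, \eqref{eq:binary-contract} when $L\ge L_T$, will be obtained by redoing the contraction estimate with an inexactness term.

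\textbf{Step 1: the confinement bounds.} I argue by induction on $t\ge1$. At $t=1$ the first step is full, since $\anorm{d^{k,0}}<\Delta$ gives $\eta_{k,0}=1$, so $\anorm{x^{k,1}-x^{k,0}}=\anorm{d^{k,0}}<\Delta$. Now suppose $x^{k,t+1}$ with $t\ge1$ is reached. The algorithm must then have passed line~\ref{line:ncg-contract}, so \eqref{eq:binary-contract} held at $x^{k,t}$. This gives $\anorm{d^{k,t}}\le\frac12\anorm{x^{k,t}-x^{k,t-1}}\le 2^{-t}\Delta<\Delta$. Hence $\eta_{k,t}=1$, the step is full, and $\anorm{x^{k,t+1}-x^{k,t}}\le 2^{-t}\Delta$. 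Summing the geometric series gives $\anorm{x^{k,t}-x^k}<2\Delta$. Then $\nabla^2\Phi_k(x^k)\succ\mu I$, which follows from \eqref{eq:ncg-certificate} at $t=0$, gives $\|x^{k,t}-x^k\|^2\le4\Delta^2/\mu$. This part needs no Lipschitz assumption.

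\textbf{Step 2: contraction when $L\ge L_T$.} Suppose both signs return \texttt{CERT} at $x^{k,t}$ with $t\ge1$. Then \eqref{eq:ncg-certificate} holds at both $x^{k,t}$ and $x^{k,t-1}$, so \eqref{eq:relative-reserve} and \eqref{eq:variation-norm-bound} are available at both points. I write
$\nabla^2\Phi_k(x^{k,t})d^{k,t}=\nabla\Phi_k(x^{k,t})+r^{k,t}$, where $r^{k,t}$ is the CG residual. Using \eqref{eq:variation-norm-bound}, this yields
\[
\anorm{d^{k,t}}\le\tfrac43\big(\|\nabla^2\Phi_k(x^k)^{-1/2}\nabla\Phi_k(x^{k,t})\|+\|\nabla^2\Phi_k(x^k)^{-1/2}r^{k,t}\|\big).
\]
For the residual term, \eqref{eq:binary-sol} gives $\|r^{k,t}\|\le\frac1{64}\|\nabla\Phi_k(x^{k,t})\|$. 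Therefore the residual only inflates the gradient term by a factor $(1+\frac1{64})$. Alternatively, \eqref{eq:binary-dual} gives $\frac1{64}\anorm{d^{k,t}}$, which can be absorbed into the left-hand side.

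For the gradient term, the previous step was a full inexact step, so $\nabla\Phi_k(x^{k,t-1})=-\nabla^2\Phi_k(x^{k,t-1})(x^{k,t}-x^{k,t-1})+r^{k,t-1}$. Following the integral decomposition in \Cref{lem:contraction}, I obtain
\[
\|\nabla^2\Phi_k(x^k)^{-1/2}\nabla\Phi_k(x^{k,t})\|\le\big(\tfrac14+\tfrac{L_T}{12\mu}\|x^{k,t}-x^{k,t-1}\|^2\big)\anorm{x^{k,t}-x^{k,t-1}}+\tfrac1{64}\anorm{x^{k,t}-x^{k,t-1}}.
\]
Here the last term is \eqref{eq:binary-dual} at index $t-1$. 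The middle term is where $L_T\le L$ enters. Since $\|x^{k,t}-x^{k,t-1}\|^2\le\Delta^2/\mu$ and $256L\Delta^2=\mu^2$, we get $\frac{L_T\Delta^2}{12\mu^2}\le\frac1{3072}$.

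Combining the pieces, the coefficient of $\anorm{x^{k,t}-x^{k,t-1}}$ is at most $\frac43\cdot\frac{65}{64}\big(\frac14+\frac1{3072}+\frac1{64}\big)\approx0.36<\frac12$. This establishes \eqref{eq:binary-contract}.

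\textbf{Main obstacle.} The delicate point is Step 2. The certificate \eqref{eq:ncg-certificate} is only approximate, with margin $\mu/32$ rather than strict positivity. So I must check that the constants $\frac14$ in \eqref{eq:variation-norm-bound} and $\frac43$ in the inverse bound still hold, or at worst degrade slightly to around $\frac14$ and $\frac43$ plus $O(1/32)$. I also need the two residual errors to leave enough slack below $\frac12$. If the margin is tight, I would instead use the sharper form $\|r\|\le\frac{\mu}{64}\|d\|$ and absorb it on the left, and accept a slightly worse contraction constant that is still below $\frac12$.
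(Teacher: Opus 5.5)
Your overall argument is the paper's proof. It uses the same induction through the contraction test for the confinement bounds. It bounds $\anorm{d^{k,t}}$ the same way, via \eqref{eq:variation-norm-bound}, the integral decomposition with \eqref{eq:chord}, and \eqref{eq:binary-dual} at index $t-1$.

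One step is wrong as written, though. The Euclidean relative bound $\|r^{k,t}\|\le\frac1{64}\|\nabla\Phi_k(x^{k,t})\|$ does not give $\|\nabla^2\Phi_k(x^k)^{-1/2}r^{k,t}\|\le\frac1{64}\|\nabla^2\Phi_k(x^k)^{-1/2}\nabla\Phi_k(x^{k,t})\|$. Suppose $\nabla\Phi_k(x^{k,t})$ lies along a top eigenvector of $\nabla^2\Phi_k(x^k)$ and $r^{k,t}$ along a bottom one. Then the ratio acquires a factor $\sqrt{\lambda_{\max}/\lambda_{\min}}$ of $\nabla^2\Phi_k(x^k)$. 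Nothing in the setting bounds this factor, since $\lambda_{\max}$ can be of order $\|\nabla^2 f(x^k)\|_{\op}\gg\mu$. So the ``inflation by $(1+\frac1{64})$'' is not justified, and neither is the coefficient $\frac43\cdot\frac{65}{64}(\cdots)\approx0.36$ built on it.

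Your stated alternative is the correct route, and it is exactly what the paper does. Use the $\mu\|d^{k,t}\|$ part of \eqref{eq:binary-sol} to obtain \eqref{eq:binary-dual}. Then absorb $\frac43\cdot\frac1{64}\anorm{d^{k,t}}$ into the left-hand side, which gives
\[
\tfrac{47}{48}\anorm{d^{k,t}}\le\tfrac43\big(\tfrac14+\tfrac1{3072}+\tfrac1{64}\big)\anorm{x^{k,t}-x^{k,t-1}}.
\]
This is a contraction factor $\frac{817}{2256}\approx0.362<\frac12$. The paper bounds the chord term loosely by $\frac1{64}$ instead of your $\frac1{3072}$ and gets $\frac{18}{47}$. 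You should make this the primary argument rather than a fallback.

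Your ``main obstacle'' is not actually an obstacle. \eqref{eq:ncg-certificate} asserts $\lambda_{\min}\ge\mu/32>0$ for both signs, which is stronger than the failure of \eqref{eq:gate}. Hence \eqref{eq:relative-reserve} and \eqref{eq:variation-norm-bound} hold verbatim, with constants exactly $\frac14$ and $\frac43$. The Lanczos inaccuracy is already absorbed by the shift $\frac{29}{32}\mu$ in \eqref{eq:ncg-meo-call}, so no degradation needs to be tracked.
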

\begin{proof}
We first prove by induction that every reached inner iterate $x^{k,t}$ with $t\ge1$ satisfies
\begin{align*}
    \textstyle\eta_{k,t-1}=1,\ \anorm{x^{k,t}-x^{k,t-1}}\le2^{1-t}\Delta.
\end{align*}
The case $t=1$ follows from $\anorm{d^{k,0}}<\Delta$. Suppose the claim holds at index $t\ge1$ and $x^{k,t+1}$ is reached. Then \eqref{eq:binary-contract} holds, and hence
\begin{align*}
    \textstyle\anorm{d^{k,t}}\le\frac12\anorm{x^{k,t}-x^{k,t-1}}\le2^{-t}\Delta<\Delta.
\end{align*}
Therefore, we have $\eta_{k,t}=1$ and $\anorm{x^{k,t+1}-x^{k,t}}\le2^{-t}\Delta$. The induction is complete. Summing the first inequality gives
\begin{align*}
    \textstyle\anorm{x^{k,t}-x^k}\le\sum_{j=1}^t\anorm{x^{k,j}-x^{k,j-1}}\le\Delta\sum_{j=1}^t2^{1-j}<2\Delta.
\end{align*}
Since $\nabla^2\Phi_k(x^k)\succ\mu I$, it follows that $\|x^{k,t}-x^k\|^2\le4\Delta^2/\mu$. These bounds hold at every reached inner iterate, including an iterate at which the inner loop is rejected.

It remains to prove \eqref{eq:binary-contract} when $L\ge L_T$ and both signs return \texttt{CERT} at $x^{k,t}$. By the inexact Newton equation, \eqref{eq:variation-norm-bound}, and \eqref{eq:binary-dual}, we have
\begin{align*}
    \textstyle\anorm{d^{k,t}}\le\frac43\big(\|\nabla^2\Phi_k(x^k)^{-1/2}\nabla\Phi_k(x^{k,t})\|+\frac1{64}\anorm{d^{k,t}}\big).
\end{align*}
Consequently, it suffices to bound $\|\nabla^2\Phi_k(x^k)^{-1/2}\nabla\Phi_k(x^{k,t})\|$. Since $\eta_{k,t-1}=1$, we have
\begin{align*}
    \textstyle\nabla\Phi_k(x^{k,t})=\ &\textstyle\big(\int_0^1\nabla^2\Phi_k(x^{k,t-1}+\tau(x^{k,t}-x^{k,t-1}))\mathrm{d}\tau-\nabla^2\Phi_k(x^{k,t-1})\big)(x^{k,t}-x^{k,t-1})\\
    &\textstyle-\big(\nabla^2\Phi_k(x^{k,t-1})d^{k,t-1}-\nabla\Phi_k(x^{k,t-1})\big).
\end{align*}
Applying \eqref{eq:chord} to the integral and \eqref{eq:binary-dual} to the residual gives
\begin{align*}
    &\textstyle\|\nabla^2\Phi_k(x^k)^{-1/2}\nabla\Phi_k(x^{k,t})\|\\
    \le\ &\textstyle\frac12\|\nabla^2\Phi_k(x^k)^{-1/2}(\nabla^2\Phi_k(x^{k,t})-\nabla^2\Phi_k(x^{k,t-1}))\nabla^2\Phi_k(x^k)^{-1/2}\|_{\op}\anorm{x^{k,t}-x^{k,t-1}}\\
    &\textstyle+\frac{L_T}{12\mu}\|x^{k,t}-x^{k,t-1}\|^2\anorm{x^{k,t}-x^{k,t-1}}+\frac1{64}\anorm{x^{k,t}-x^{k,t-1}}.
\end{align*}
Moreover, \eqref{eq:binary-confine} and \eqref{eq:binary-radius-choice} imply
\begin{align*}
    \textstyle\frac{L_T}{12\mu}\|x^{k,t}-x^{k,t-1}\|^2\le\frac{L_T\Delta^2}{12\mu^2}\le \frac{1}{64}.
\end{align*}
Combining the above inequalities gives
\begin{align*}
    \textstyle\|\nabla^2\Phi_k(x^k)^{-1/2}\nabla\Phi_k(x^{k,t})\|\le\frac{9}{32}\anorm{x^{k,t}-x^{k,t-1}}.
\end{align*}
Therefore, we obtain
\begin{align*}
    \textstyle\frac{47}{48}\anorm{d^{k,t}}\le\frac{3}{8}\anorm{x^{k,t}-x^{k,t-1}}\le\frac{47}{96}\anorm{x^{k,t}-x^{k,t-1}},
\end{align*}
which proves \eqref{eq:binary-contract}. The proof is complete.
\end{proof}

\begin{lemma}[Small-step case: convergence]\label{lem:binary-small-newton}
Under \Cref{ass:hessian,ass:third}, suppose \eqref{eq:ncg-certificate} holds whenever both signs return \texttt{CERT} in the inner loop and the first Newton step satisfies $\anorm{d^{k,0}}<\Delta$. Then every reached inner iterate $x^{k,t}$ with $t\ge1$ at which both signs return \texttt{CERT} satisfies
\begin{equation*}
    \|\nabla f(x^{k,t})\|\le\frac{L_H\Delta^2}{2\mu}4^{1-t}+\sqrt\mu\Delta\big(4+\frac{2^{1-t}}{64}\big)\ \text{and}\ \nabla^2f(x^{k,t})\succ-\mu I.
\end{equation*}
The inner loop ends by acceptance, rejection, or returning a stationary point no later than
\begin{equation}\label{eq:binary-inner-count}
    t=1+\Big\lceil\log_4\big(1+\frac{L_H\Delta^2}{\mu\epsg}\big)\Big\rceil.
\end{equation}
\end{lemma}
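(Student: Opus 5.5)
The argument mirrors \Cref{lem:small-newton}, with the exact Newton equation replaced by the inexact one in \eqref{eq:binary-sol}. Fix a reached inner iterate $x^{k,t}$ with $t\ge1$ at which both signs return \texttt{CERT}. We begin with the split
\begin{equation*}
\|\nabla f(x^{k,t})\|\le\|\nabla\Phi_k(x^{k,t})\|+2\mu\|x^{k,t}-x^k\|.
\end{equation*}
By \Cref{lem:binary-refinement}, every step up to $x^{k,t}$ is a full step. The same lemma gives $\|x^{k,t}-x^k\|\le 2\Delta/\sqrt\mu$, so the bias term is at most $4\sqrt\mu\Delta$. The curvature claim follows directly from \eqref{eq:ncg-certificate}: it yields \eqref{eq:relative-reserve}, hence $\nabla^2 f(x^{k,t})\succ\frac34\nabla^2 f(x^k)-\frac\mu4 I\succ-\mu I$, using $\nabla^2 f(x^k)\succ-\mu I$ from the certificate at $t=0$.

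For the proximal residual, we use $x^{k,t}=x^{k,t-1}-d^{k,t-1}$ and write
\begin{align*}
\nabla\Phi_k(x^{k,t})={}&\big[\nabla\Phi_k(x^{k,t})-\nabla\Phi_k(x^{k,t-1})-\nabla^2\Phi_k(x^{k,t-1})(x^{k,t}-x^{k,t-1})\big]\\
&-\big[\nabla^2\Phi_k(x^{k,t-1})d^{k,t-1}-\nabla\Phi_k(x^{k,t-1})\big].
\end{align*}
The first bracket is bounded by \eqref{eq:gradient-taylor}, applied to $\Phi_k$, whose Hessian is also $L_H$-Lipschitz. Combined with $\sqrt\mu\|x^{k,t}-x^{k,t-1}\|\le 2^{1-t}\Delta$, this gives $\frac{L_H}{2}\|x^{k,t}-x^{k,t-1}\|^2\le\frac{L_H\Delta^2}{2\mu}4^{1-t}$. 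The second bracket is bounded by \eqref{eq:binary-sol}: it is at most $\frac{\mu}{64}\|d^{k,t-1}\|=\frac{\mu}{64}\|x^{k,t}-x^{k,t-1}\|\le\frac{\sqrt\mu}{64}2^{1-t}\Delta$. Summing the three contributions gives exactly the stated bound $\frac{L_H\Delta^2}{2\mu}4^{1-t}+\sqrt\mu\Delta(4+\frac{2^{1-t}}{64})$.

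For the iteration count, we substitute $\Delta=\mu/(16\sqrt L)$ with $\mu\le(L\epsg^2)^{1/3}$, which gives $\sqrt\mu\Delta=\mu^{3/2}/(16\sqrt L)\le\epsg/16$. Hence the second term is at most $\epsg(\frac14+\frac1{512})$. The first term is at most $\frac{\epsg}{4}$ once $4^{t-1}\ge\frac{2L_H\Delta^2}{\mu\epsg}$. A count of the form \eqref{eq:binary-inner-count} is, I expect, enough for this, possibly after absorbing a constant factor of $2$ in the logarithm; this constant bookkeeping is the main thing to check. At that $t$, if both signs return \texttt{CERT}, then $\|\nabla f(x^{k,t})\|\le\epsg$ and $\lambda_{\min}(\nabla^2 f(x^{k,t}))>-\mu\ge-\epsH$, so a stationary point is returned.

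If instead either procedure returns \texttt{NC}, or \eqref{eq:binary-contract} fails at an earlier $t$, the loop ends at that point by acceptance or rejection through line~\ref{line:ncg-acceptance}. Either way, the inner loop terminates no later than \eqref{eq:binary-inner-count}.
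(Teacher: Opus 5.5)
Your derivation of the gradient and curvature bounds follows the paper's argument and is correct. You use the same split $\|\nabla f(x^{k,t})\|\le\|\nabla\Phi_k(x^{k,t})\|+2\mu\|x^{k,t}-x^k\|$. You decompose $\nabla\Phi_k(x^{k,t})$ the same way, into a Taylor remainder bounded by \eqref{eq:gradient-taylor} and a capped-CG residual bounded by \eqref{eq:binary-sol}. You also use \eqref{eq:binary-confine} in the same way.

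The gap is in the iteration-count step, which you leave unverified. You allot $\epsg/4$ to the first term, and that needs $4^{t-1}\ge 2X$ with $X\coloneqq L_H\Delta^2/(\mu\epsg)$. The index in \eqref{eq:binary-inner-count} only guarantees $4^{t-1}\ge 1+X$, and $1+X<2X$ whenever $X>1$. So your chain does not establish the stated count. ``Absorbing a factor of $2$ in the logarithm'' would prove a weaker statement than the one claimed.

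No adjustment is needed, because there is slack in the second term. Since $t\ge1$ gives $2^{1-t}\le1$, the second term is at most $\frac{\epsg}{16}(4+\frac1{64})=\frac{\epsg}{4}+\frac{\epsg}{1024}$; your $\frac1{512}$ is a harmless overestimate. It therefore suffices that the first term be at most $\epsg/2$, and indeed
\[
\frac{L_H\Delta^2}{2\mu}4^{1-t}\le\frac{L_H\Delta^2}{2\mu}\cdot\frac{1}{1+X}=\frac{\epsg}{2}\cdot\frac{X}{1+X}<\frac{\epsg}{2}.
\]
Hence $\|\nabla f(x^{k,t})\|<\epsg$ at exactly the index \eqref{eq:binary-inner-count}, which is how the paper closes the argument. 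With this correction, the rest of your termination reasoning (\texttt{NC} leading to acceptance or rejection, otherwise a stationary point returned) goes through.
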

\begin{proof}
First, note that $\|\nabla f(x^{k,t})\|\le\|\nabla\Phi_k(x^{k,t})\|+2\mu\|x^{k,t}-x^k\|$. By \Cref{lem:binary-refinement}, the preceding step is full. Applying \eqref{eq:gradient-taylor} and \eqref{eq:binary-sol} gives
\begin{align*}
    &\textstyle\|\nabla\Phi_k(x^{k,t})\| \\
    \le\ &\textstyle\|\nabla\Phi_k(x^{k,t})-\nabla\Phi_k(x^{k,t-1})-\nabla^2\Phi_k(x^{k,t-1})(x^{k,t}-x^{k,t-1})\|+\|\nabla^2\Phi_k(x^{k,t-1})d^{k,t-1}-\nabla\Phi_k(x^{k,t-1})\|\\
    \le\ &\textstyle\frac{L_H}{2}\|x^{k,t}-x^{k,t-1}\|^2+\frac\mu{64}\|x^{k,t}-x^{k,t-1}\|.
\end{align*}
Moreover, \eqref{eq:binary-confine} gives $\sqrt\mu\|x^{k,t}-x^{k,t-1}\|\le2^{1-t}\Delta$ and $\sqrt\mu\|x^{k,t}-x^k\|\le2\Delta$, which implies that
\begin{align*}
    \textstyle\|\nabla f(x^{k,t})\|\le\frac{L_H\Delta^2}{2\mu}4^{1-t}+\sqrt\mu\Delta\big(4+\frac{2^{1-t}}{64}\big).
\end{align*}
Since both signs return \texttt{CERT}, \eqref{eq:ncg-certificate} also gives $\nabla^2f(x^{k,t})\succ-\mu I$.

It remains to bound the number of inner iterations. By the parameter choice \eqref{eq:binary-radius-choice}, we have $256L\Delta^2=\mu^2$, $16\sqrt\mu\Delta=\frac{\mu^{3/2}}{\sqrt L}\le\epsg$, and $2\mu\le\epsH$. At the index in \eqref{eq:binary-inner-count}, the first term in the gradient bound is at most $\epsg/2$. We thus obtain
\begin{align*}
    \textstyle\|\nabla f(x^{k,t})\|\le\frac{\epsg}{2}+\frac{\epsg}{16}\big(4+\frac1{64}\big)<\epsg,\ \nabla^2f(x^{k,t})\succ-\mu I\succeq-\frac{\epsH}{2}I.
\end{align*}
Thus, if the inner loop has not already ended, either a negative-curvature direction is returned or $x^{k,t}$ is an $(\epsg,\epsH)$-approximate second-order stationary point. The stopping test is applied after both signs return \texttt{CERT} and before the large-step or contraction test. If $\nabla\Phi_k(x^{k,t})=0$, the same test holds by $\|\nabla f(x^{k,t})\|\le2\mu\|x^{k,t}-x^k\|\le4\sqrt\mu\Delta$, including $t=0$ and the first large step. The proof is complete.
\end{proof}

\begin{theorem}\label{thm:binary}
Under \Cref{ass:hessian,ass:third}, fix $\delta\in(0,1)$ and use the failure probability parameter $\delta/[\ell(\ell+1)]$ for the $\ell$th call of \Cref{procedure:meo}, including calls in rejected inner loops. With probability at least $1-\delta$, \Cref{alg:accppncg} returns an $(\epsg,\epsH)$-approximate second-order stationary point, every value of $L$ satisfies
\begin{equation}\label{eq:binary-bound}
    L_{\min}\le L\le\max\{L_{\rm in},2L_T\},
\end{equation}
and the number of second-order oracle evaluations is
\begin{equation*}
    \widetilde\cO\big((f(x^0)-f_{\inf})\max\{\max\{L_{\rm in},2L_T\}^{1/3}\epsg^{-4/3},\max\{L_{\rm in},2L_T\}\epsH^{-2}\}\big).
\end{equation*}
\end{theorem}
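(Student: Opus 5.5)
First, fix a good event. We condition on the event $\mathcal{E}$ that every call of \Cref{procedure:meo} that returns \texttt{CERT} is correct, i.e.\ the matrix in \eqref{eq:ncg-meo-call} has smallest eigenvalue at least $-\mu/16$. We use failure parameter $\delta/[\ell(\ell+1)]$ for the $\ell$th call. Since $\sum_{\ell\ge1}1/[\ell(\ell+1)]=1$, a union bound gives $\Pr(\mathcal{E})\ge1-\delta$, however many calls are made, including calls in rejected loops. On $\mathcal{E}$, \eqref{eq:ncg-certificate} holds whenever both signs return \texttt{CERT}. Thus the standing hypothesis of \Cref{lem:binary-initial-newton,lem:binary-refinement,lem:binary-small-newton} is met in every inner loop. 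Capped CG is deterministic, so \eqref{eq:binary-nc} and \eqref{eq:binary-sol} hold surely. Everything below is on $\mathcal{E}$.

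Next, prove the key claim that no rejection occurs once $L\ge L_T$. Suppose an inner loop starts with $L\ge L_T$. Split on $t=0$ versus $t\ge1$ and on large versus small step.
\begin{itemize}
\item If \texttt{NC} occurs at $t=0$, \Cref{lem:binary-initial-curvature} gives a decrease of $67\mu^2/(160L)$. Since $256L\Delta^2=\mu^2$, this is $\ge\Delta^2/4$.
\item If \texttt{NC} occurs at $t\ge1$, we need $\nabla^2\Phi_k(x^k)\succ\mu I$ and $\anorm{x^{k,t}-x^k}\le2\Delta$. The first holds because $t=0$ passed the certificate. The second comes from \Cref{lem:binary-refinement} in the small-step case, and from $\anorm{x^{k,1}-x^k}=\Delta$ in the large-step case. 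Then \Cref{lem:binary-relative-curvature} gives $27\mu^2/(320L)\ge\Delta^2/4$.
\item The large-step branch is covered by \Cref{lem:binary-initial-newton}.
\item In the small-step branch, \Cref{lem:binary-refinement} shows \eqref{eq:binary-contract} never fails. \Cref{lem:binary-small-newton} shows the loop ends within \eqref{eq:binary-inner-count} iterations, by an NC update or by returning a stationary point.
\end{itemize}
Hence, when $L\ge L_T$, every inner loop either returns a stationary point or produces $\tilde x$ satisfying \eqref{eq:binary-curvature}.

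The bound \eqref{eq:binary-bound} follows by induction. $L$ starts at $L_{\rm in}$. It is doubled only on rejection, which requires $L<L_T$, so the new value is below $2L_T$. Halving is floored at $L_{\min}$ and never increases $L$.

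Finally, count oracle calls. Let $\bar L=\max\{L_{\rm in},2L_T\}$.
\begin{itemize}
\item \emph{Accepted loops.} Each decreases $f$ by $\Delta^2/4=\mu^2/(1024L)$. Here $\mu^2/L=\min\{L^{-1/3}\epsg^{4/3},\epsH^2/(4L)\}$ is decreasing in $L$, so each accepted loop decreases $f$ by at least $c\min\{\bar L^{-1/3}\epsg^{4/3},\bar L^{-1}\epsH^2\}$. So there are at most $(f(x^0)-f_{\inf})\cdot\cO(\max\{\bar L^{1/3}\epsg^{-4/3},\bar L\epsH^{-2}\})$ acceptances.
\item \emph{Rejected loops.} Use the doubling/halving potential. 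Each rejection multiplies $L$ by $2$, each acceptance divides it by at most $2$, and $L$ stays in $[L_{\min},\bar L]$. Hence
\[
\#\text{rejections}\le\#\text{acceptances}+\log_2(\bar L/L_{\min}).
\]
\item \emph{Cost per loop.} Each loop, accepted or rejected, costs at most the count in \eqref{eq:binary-inner-count} plus one evaluation, using $L_H\Delta^2/(\mu\epsg)\le L_H\mu/(256L_{\min}\epsg)$. This is logarithmic and absorbed in $\widetilde\cO$. Function evaluations at candidate points are $\cO(1)$ per loop.
\end{itemize}

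The main obstacle is making sure rejected loops, which run with $L<L_T$, cannot behave badly. The deterministic lemmas that do not require $L\ge L_T$ must still bound their length. This is exactly the first part of \Cref{lem:binary-refinement} together with the iteration count in \Cref{lem:binary-small-newton}, which rely only on the contraction test and not on $L\ge L_T$. The count in \Cref{lem:binary-small-newton} also uses $\sqrt\mu\Delta$ being small relative to $\epsg$, which holds for every $L$ by \eqref{eq:binary-radius-choice}. I would check this point carefully.
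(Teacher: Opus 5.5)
Your proposal is correct and takes essentially the same route as the paper's proof. You use a union bound over the minimum-eigenvalue-oracle calls via $\sum_{\ell}1/[\ell(\ell+1)]=1$, then the section's lemmas to show that no rejection occurs once $L\ge L_T$ (so any doubled value stays below $2L_T$), then count accepted loops by the $\Delta^2/4$ decrease and rejected loops by the doubling/halving potential. The per-loop length is logarithmic and does not need $L\ge L_T$. The differences are cosmetic: your rejection bound $\log_2(\bar L/L_{\min})$ is slightly looser than the paper's $\log_2(\bar L/L_{\rm in})$, and you leave implicit the final step that finite counts force termination at a point which, on the good event, satisfies $\|\nabla f\|\le\epsg$ and $\nabla^2 f\succ-\mu I\succeq-\frac{\epsH}{2}I$.
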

\begin{proof}
We first show that all calls of \Cref{procedure:meo} satisfy \eqref{eq:meo-accuracy} with probability at least $1-\delta$. At call $\ell$, the starting vector is drawn independently and uniformly from the unit sphere. Conditional on all preceding calls, \Cref{lem:meo-guarantee} bounds the failure probability by $\delta/[\ell(\ell+1)]$. Since the call counter is not reset after rejection and $\sum_{\ell=1}^{\infty}\delta/(\ell(\ell+1))=\delta$, a union bound gives the claimed probability. In the following, we consider the event on which every call satisfies \eqref{eq:meo-accuracy}. In particular, \eqref{eq:ncg-certificate} holds whenever both signs return \texttt{CERT}.

To bound the number of second-order oracle evaluations, we first bound the cost of each inner loop and then the number of inner loops. Suppose $K$ outer updates have been accepted and $U$ inner loops have been rejected. Then at most $K+U+1$ inner loops have started. Each inner loop uses at most three additional function evaluations for the negative-curvature candidates. By \Cref{lem:binary-initial-newton,lem:binary-small-newton}, the total number of evaluations is at most
\begin{equation}\label{eq:binary-N}
    \textstyle1+(K+U+1)\Big[5+\Big\lceil\log_4\big(1+\frac{L_H\min\{(L_{\min}\epsg^2)^{1/3},\epsH/2\}}{256L_{\min}\epsg}\big)\Big\rceil\Big],
\end{equation}
where we use $\Delta^2/\mu=\min\{(L\epsg^2)^{1/3},\epsH/2\}/(256L)$ and $L\ge L_{\min}$. The function evaluations at the candidates are counted without computing unused derivatives. It remains to bound $K$ and $U$.

We first prove \eqref{eq:binary-bound}. If $L\ge L_T$, \Cref{lem:binary-initial-curvature,lem:binary-relative-curvature,lem:binary-initial-newton,lem:binary-refinement} imply that every auxiliary point satisfies \eqref{eq:binary-curvature} and every small-step contraction test holds. Hence, rejection cannot occur. Doubling $L$ after a rejection therefore gives $L<2L_T$, whereas accepting an outer update does not increase $L$. This proves \eqref{eq:binary-bound}. By the acceptance condition \eqref{eq:binary-curvature}, we then have
\begin{align*}
    \textstyle\frac{K}{1024\max\{\max\{L_{\rm in},2L_T\}^{1/3}\epsg^{-4/3},4\max\{L_{\rm in},2L_T\}\epsH^{-2}\}}\le\sum_{k=0}^{K-1}\big(f(x^k)-f(x^{k+1})\big)\le f(x^0)-f_{\inf},
\end{align*}
where we use $4/\Delta^2=1024\max\{L^{1/3}\epsg^{-4/3},4L\epsH^{-2}\}$. Consequently, we obtain
\begin{align*}
    \textstyle K\le1024(f(x^0)-f_{\inf})\max\{\max\{L_{\rm in},2L_T\}^{1/3}\epsg^{-4/3},4\max\{L_{\rm in},2L_T\}\epsH^{-2}\}.
\end{align*}
To bound $U$, note that rejection doubles $L$ and acceptance decreases $L$ by at most a factor of two. Thus, it follows that
\begin{align*}
    \textstyle L_{\rm in}2^{U-K}\le L\le\max\{L_{\rm in},2L_T\},\ U\le K+\log_2\frac{\max\{L_{\rm in},2L_T\}}{L_{\rm in}}.
\end{align*}
Substituting the bounds on $K$ and $U$ into \eqref{eq:binary-N} gives the stated complexity. Since every inner loop is finite, the algorithm must return a stationary point. The stopping test gives $\|\nabla f(x^{k,t})\|\le\epsg$, and \eqref{eq:ncg-certificate} and \eqref{eq:binary-radius-choice} give $\nabla^2f(x^{k,t})\succ-\mu I\succeq-(\epsH/2)I$. The proof is complete.
\end{proof}

\begin{remark}[Optional line search]\label{rem:binary-linesearch}
After the \texttt{SOL} output at $t=0$, one may try $x^k-\beta^j d^{k,0}$, $j=0,\ldots,J_{\max}$, with $0<\beta<1$ and $0<c<1/2$, before relative negative-curvature detection. Evaluate derivatives only if the candidate has function value at most $f(x^k)$. When its gradient norm is at most $\epsg$, call \Cref{procedure:meo} with accuracy $\epsH/2$ and
\begin{equation}\label{eq:ncg-linesearch-call}
    \textstyle H=\nabla^2f(x^k-\beta^j d^{k,0})+\frac{\epsH}{2}I.
\end{equation}
Return the candidate only if this call returns no negative curvature; its eigenvalue error is at most $\epsH/4$, so the shift gives $\lambda_{\min}(\nabla^2f(x^k-\beta^j d^{k,0}))>-\epsH$. If it returns \texttt{NC}, do not use its direction. A candidate not returned as stationary is accepted as an outer update only if
\begin{equation*}
    \textstyle f(x^k)-f(x^k-\beta^j d^{k,0})\ge\max\{c\beta^j\inner{\nabla f(x^k)}{d^{k,0}},\Delta^2/4\}.
\end{equation*}
An unsuccessful search resumes \Cref{alg:accppncg} without changing $x^k$ or $L$; a successful update uses its usual acceptance update. Include these calls in the same failure-probability allocation. This optional search adds at most $J_{\max}+1$ candidate evaluations per inner loop, preserves \eqref{eq:binary-curvature}, and evaluates derivatives only in the initial sublevel set. Since $2\mu\le\epsH$, its eigenvalue calls satisfy the same product bound. Thus, for fixed $J_{\max}$, both complexity rates remain unchanged.
\end{remark}

Suppose additionally that the initial sublevel set ${x\in\bR^d:f(x)\le f(x^0)}$ is bounded. We now derive the Hessian--vector product complexity. Note that \Cref{alg:accppncg} is monotone with respect to the outer iterates, and hence the outer iterates remain in the initial sublevel set. For the inner iterates, we have
\begin{align*}
    \textstyle\|x^{k,t}-x^k\|\le\frac{2\Delta}{\sqrt\mu}=\frac{\sqrt\mu}{8\sqrt L}\le\frac1{8\sqrt{2L_{\min}}}.
\end{align*}
Since $\nabla^2f(\cdot)$ is Lipschitz continuous, there exists a uniform constant $U_H>0$ such that the norm of every queried Hessian is bounded above by $U_H$. Note that the matrix in \eqref{eq:ncg-meo-call} has norm at most $9U_H+29\mu/32$, and each matrix--vector product requires at most two Hessian--vector products. By \Cref{lem:ccg-interface} and \Cref{lem:meo-guarantee}, the number of Hessian--vector products per inner iteration is
\begin{equation*}
    \textstyle \widetilde\cO\big(\min\{d,\sqrt{1+U_H/\mu}\}\big) = \widetilde\cO\big(\min\{d,\sqrt{1+U_H}\max\{\epsg^{-1/3},\epsH^{-1/2}\}\}\big)
\end{equation*}
since $\mu\ge\min\{(L_{\min}\epsg^2)^{1/3},\epsH/2\}$. Combining it with the evaluation complexity in \Cref{thm:binary} gives
\begin{equation*}
    \textstyle\widetilde\cO\big((f(x^0)-f_{\inf})\min\{d\max\{\epsg^{-4/3},\epsH^{-2}\},\sqrt{1+U_H}\max\{\epsg^{-5/3},\epsH^{-5/2}\}\}\big).
\end{equation*}
Unlike the second-order-stationarity extension of the Convex Until Proven Guilty method in \cite{carmon2017}, our method requires no prior knowledge of the true Lipschitz constants and adaptively adjusts the regularization parameters.

\section{Numerical Experiments}\label{sec:numerical}
In this section, we evaluate the numerical performance of the accelerated proximal point Newton--CG method (\texttt{APPNCG}) on the 35 unconstrained minimization problems from Mor'e, Garbow, and Hillstrom's (MGH) test set \citep{more1981testing} and a class of nonconvex regression problems. We compare our method with the adaptive regularization methods \texttt{AR3-Interp+} and \texttt{AR2-Interp} in \citet{cartishauserliuwelzelzhu2026}. For both methods, we adapt the MATLAB code released in their paper.\footnote{The authors' implementation is available at \url{https://github.com/karlwelzel/ar3-matlab}.} We also compare with the homogeneous second-order descent method (\texttt{HSODM}) from \citet{zhang2026homogeneous,he2026homogeneous} and the adaptive universal trust-region method (\texttt{UTR}) in \citet{jiang2025accelerating,jiang2026beyond}. We set $\epsg=10^{-6}$ and $\epsH=10^{-3}$. The experiments examine both computational time and the amount of oracle information. To compare methods that use different representations of the Hessian, we report the following quantity under the label \textit{Oracle Evaluations}:
\begin{equation}\label{eq:numerical-oracle-evaluations}
\textstyle N_{\mathrm{oracle}} \coloneqq N_f+N_g+N_{Hv}+dN_H,
\end{equation}
where $N_f$ and $N_g$ are the numbers of function and gradient evaluations, respectively, $N_{Hv}$ counts Hessian--vector products requested through the oracle, and $N_H$ counts explicit dense Hessian evaluations requested by the solver. All experiments are performed in MATLAB R2026a on a MacBook Pro equipped with an Apple M5 chip and 32 GB of RAM.

We begin with the 35 problems in the MGH test set, using the default dimension and starting point for each problem. The problem dimensions range from 2 to 40. We compare the methods in terms of runtime and oracle evaluations \eqref{eq:numerical-oracle-evaluations}, using the performance profiles of \citet{dolan2002benchmarking}. A performance profile shows the fraction of problems on which a method performs within a given factor of the best method for that problem. Let $\mathcal{P}$ be the set of test problems and $\mathcal{S}$ the set of methods. For each problem $p$ and method $s$, let $t_{p,s}$ denote the corresponding cost. A run is considered successful only if its output $x$ satisfies $\|\nabla f(x)\|\le \epsg$ and $\lambda_{\min}(\nabla^2 f(x))>-\epsH$. We set $t_{p,s}=+\infty$ if the run is unsuccessful.
The performance ratio is defined as $r_{p,s}\coloneqq t_{p,s}/\min_{s'\in\mathcal{S}}t_{p,s'}$ with $r_{p,s}=+\infty$ if all methods fail on problem $p$. The performance profile is then given by
\begin{equation}\label{eq:numerical-performance-profile}
    \textstyle \rho_s(\alpha)\coloneqq\frac{1}{|\mathcal{P}|}
    \big|\{p\in\mathcal{P}:r_{p,s}\le 2^\alpha\}\big|, \ \alpha\ge0.
\end{equation}
In particular, $\rho_s(0)$ is the fraction of problems on which method $s$ achieves the lowest cost, and $\rho_s(1)$ and $\rho_s(2)$ give the fractions of problems solved within two and four times the lowest cost, respectively. To avoid zero denominators, successful runtimes are
bounded below by $10^{-6}$ seconds and successful oracle counts by one. \Cref{fig:numerical-mgh} shows the performance profiles over $0\le\alpha\le\log_2(100)$. For runtime, \texttt{HSODM} and \texttt{UTR} perform well near $\alpha=0$, while \texttt{APPNCG} achieves a high fraction of successful problems as the allowed cost
factor increases. For oracle evaluations, \texttt{AR3-Interp+} achieves
the lowest cost on many problems.

\begin{figure}[htbp]
    \centering
    \includegraphics[width=0.49\linewidth]{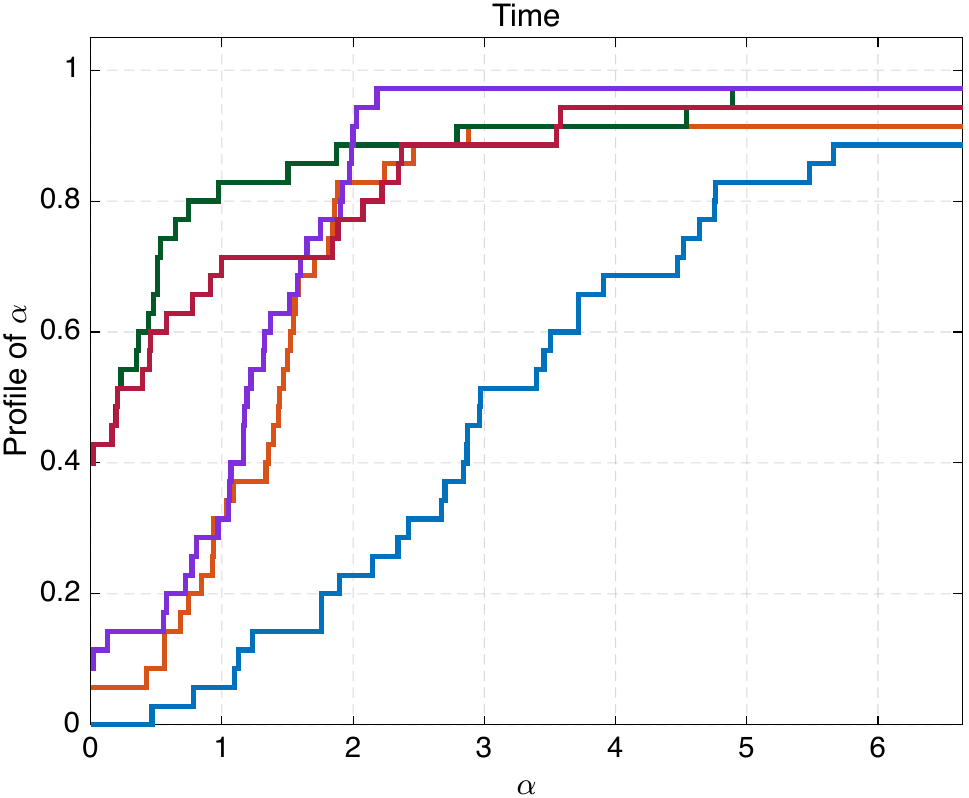}\hfill
    \includegraphics[width=0.49\linewidth]{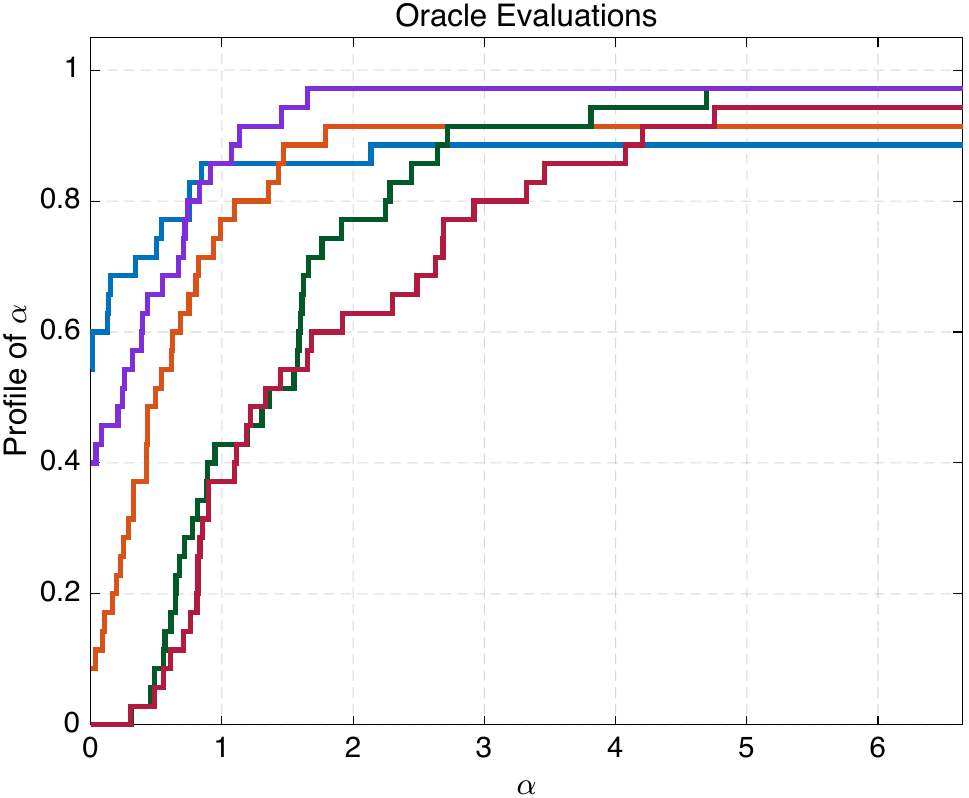}
    \par\vspace{5pt}
    \includegraphics[width=0.75\linewidth]{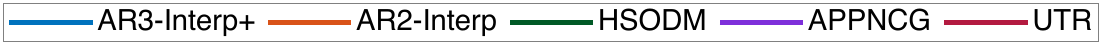}
    \caption{Performance profiles on the 35 MGH problems. The left panel uses running time and the right panel uses Oracle Evaluations defined in \eqref{eq:numerical-oracle-evaluations}. Both panels use the logarithmic performance ratio and the profile defined in \eqref{eq:numerical-performance-profile}.}
    \label{fig:numerical-mgh}
\end{figure}

We next consider the regularized sigmoid regression problem
\begin{equation}\label{eq:numerical-regression}
    \textstyle \min_{x\in\bR^d} \ f(x)\coloneqq
    \frac{1}{n}\sum_{i=1}^{n}\frac{1}{1+\exp(b_i\inner{a_i}{x})}
    +\frac{\lambda}{2}\|x\|^2,
\end{equation}
where $a_i\in\bR^d$ is a feature vector and $b_i\in\{-1,1\}$ is its label. The data-fitting term is the sigmoid loss and is generally nonconvex. We set $\lambda=10^{-4}$ and use six binary classification datasets obtained from the LIBSVM repository.\footnote{The datasets are available at \url{https://www.csie.ntu.edu.tw/~cjlin/libsvmtools/datasets/binary.html}.} For every dataset and every method, the initial point is $x^0=10\mathbf{1}_d$, where $\mathbf{1}_d$ denotes the all-ones vector in $\bR^d$. We use the same stationarity tolerances as in the MGH experiment and impose a total wall-clock limit of 300 seconds per run. \Cref{fig:numerical-regression-first,fig:numerical-regression-second} show the gradient norm against CPU time and Oracle Evaluations. \texttt{APPNCG} requires the least CPU time and the fewest Oracle Evaluations on every dataset. The difference is particularly pronounced on \texttt{covtype.binary}: \texttt{APPNCG} uses 3.72 CPU seconds and 339 Oracle Evaluations, compared with 13.51 seconds and 1,210 evaluations for \texttt{AR2-Interp} and 193.59 seconds and 1,206 evaluations for \texttt{AR3-Interp+}.

\begin{figure}[htbp]
    \centering
    \includegraphics[width=0.32\linewidth]{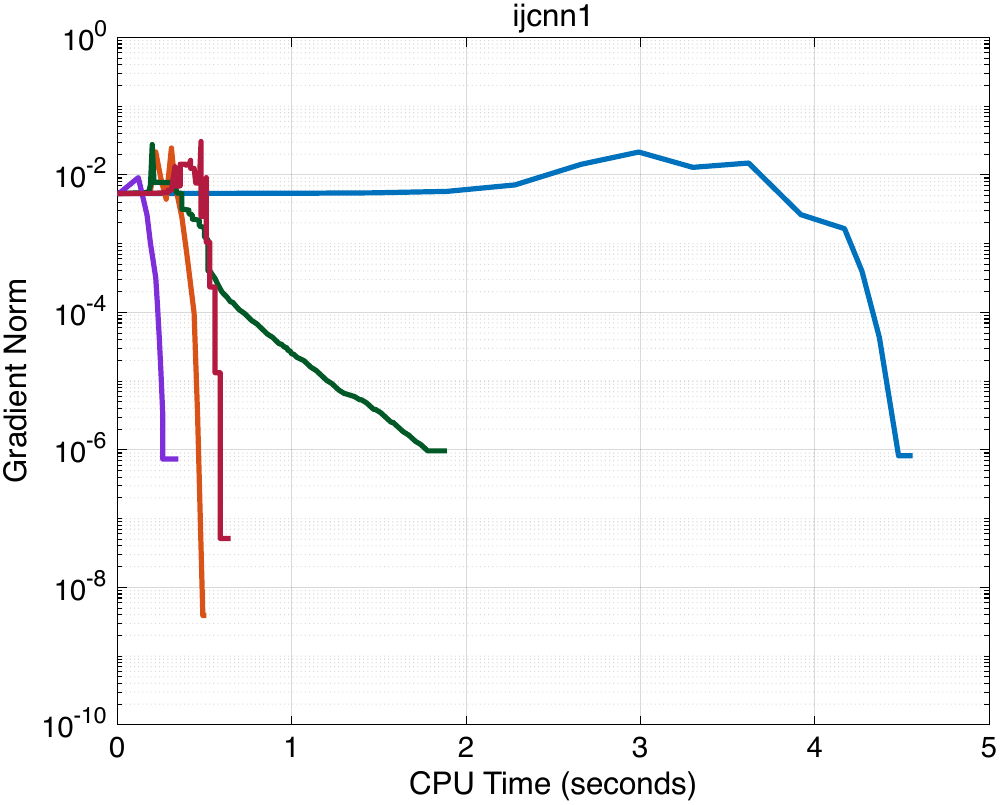}\hfill
    \includegraphics[width=0.32\linewidth]{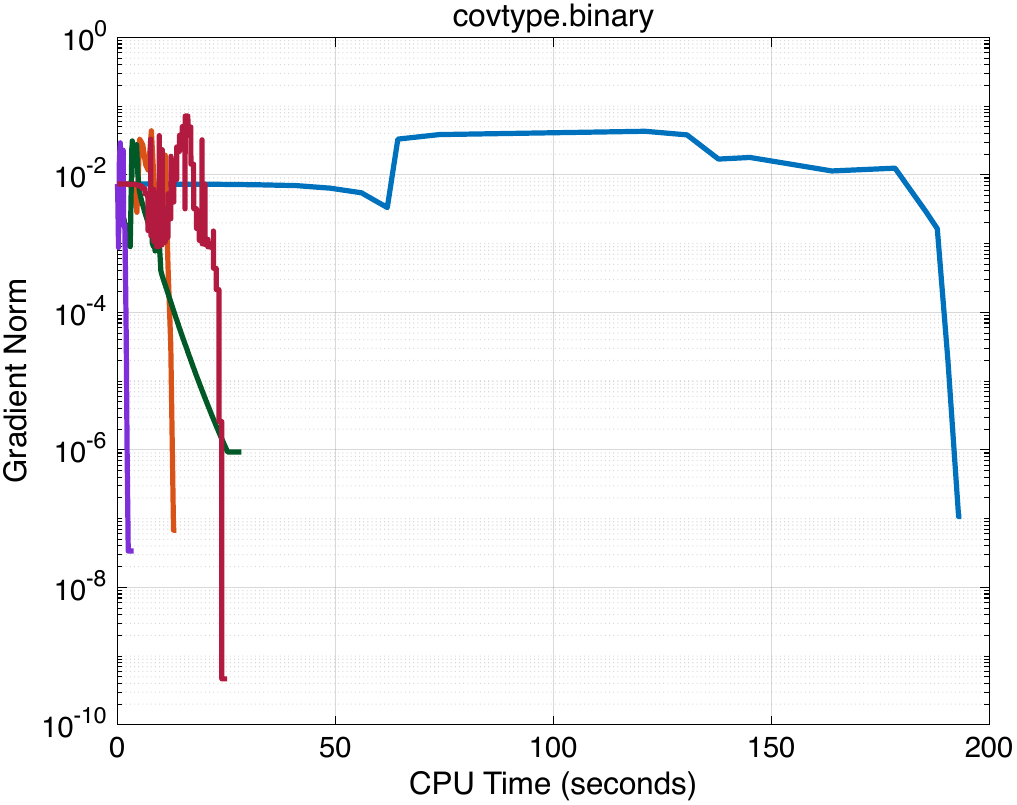}\hfill
    \includegraphics[width=0.32\linewidth]{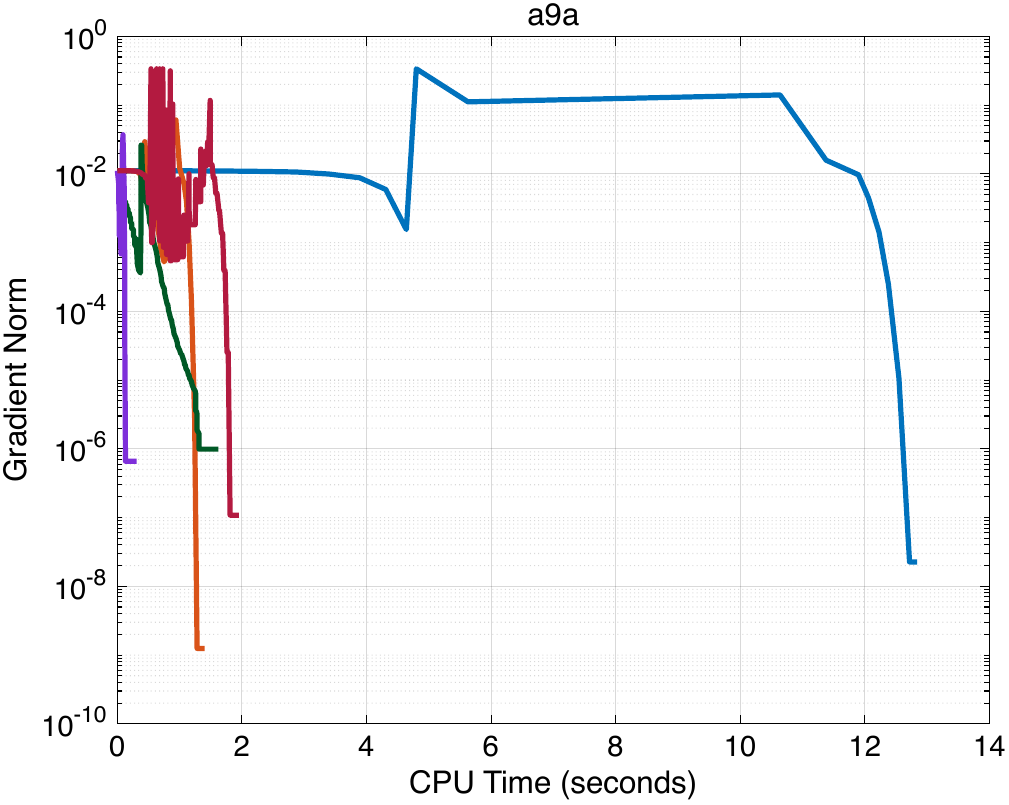}
    \par\vspace{4pt}
    \includegraphics[width=0.32\linewidth]{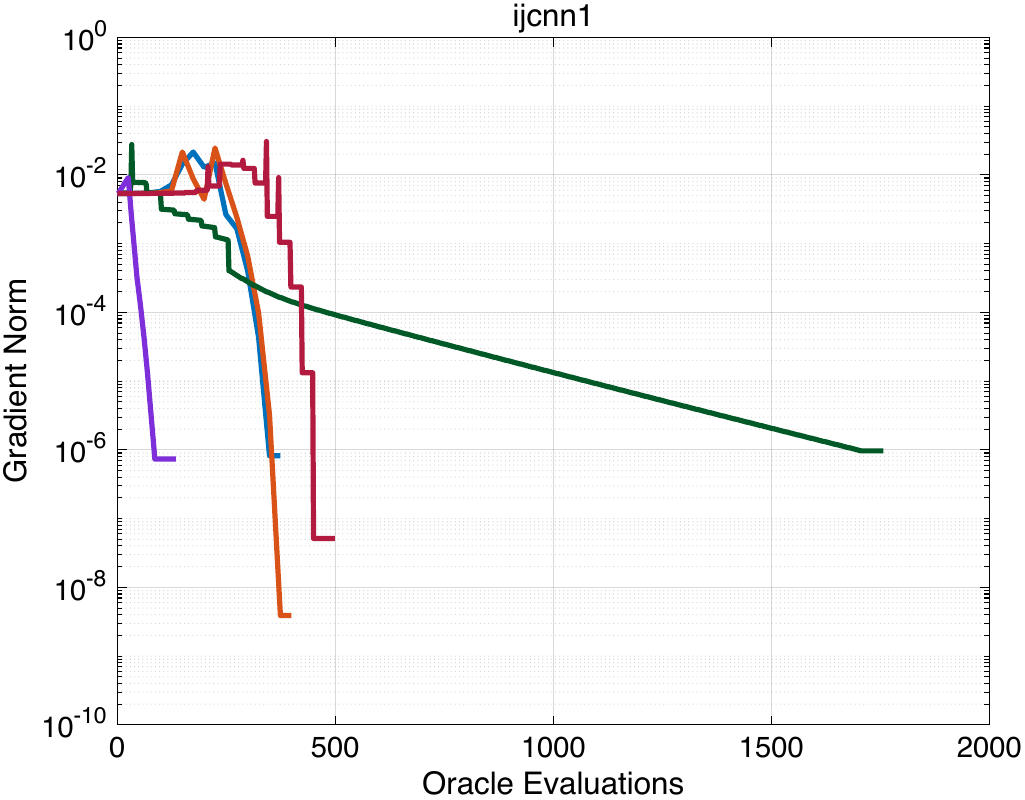}\hfill
    \includegraphics[width=0.32\linewidth]{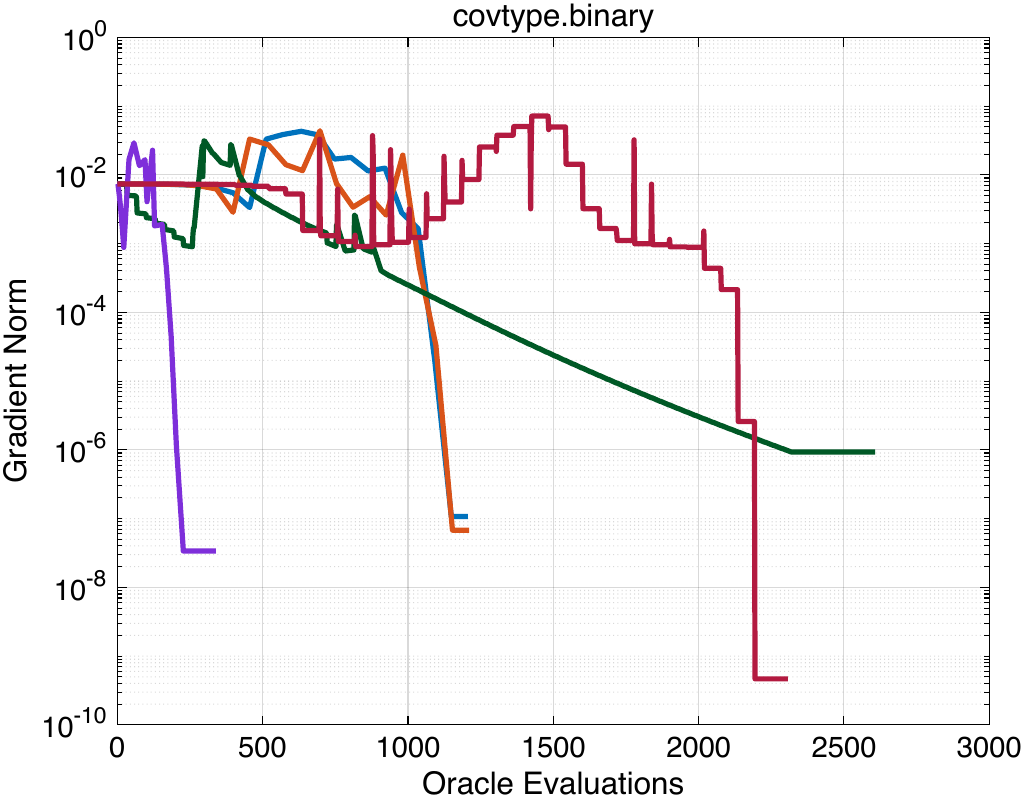}\hfill
    \includegraphics[width=0.32\linewidth]{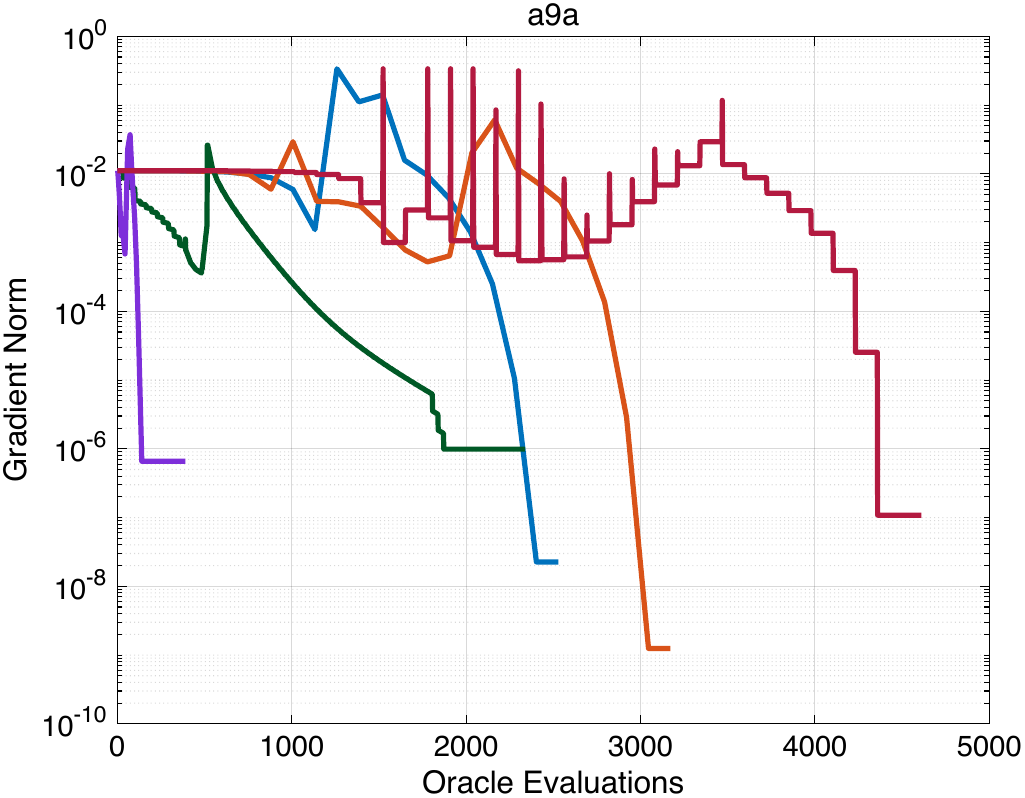}
    \par\vspace{5pt}
    \includegraphics[width=0.75\linewidth]{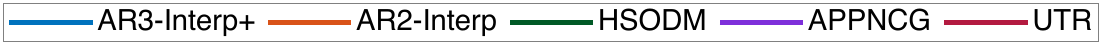}
    \caption{Gradient norms for \eqref{eq:numerical-regression} on \texttt{ijcnn1}, \texttt{covtype.binary}, and \texttt{a9a}. The top row uses CPU time and the bottom row uses Oracle Evaluations.}
    \label{fig:numerical-regression-first}
\end{figure}

\begin{figure}[htbp]
    \centering
    \includegraphics[width=0.32\linewidth]{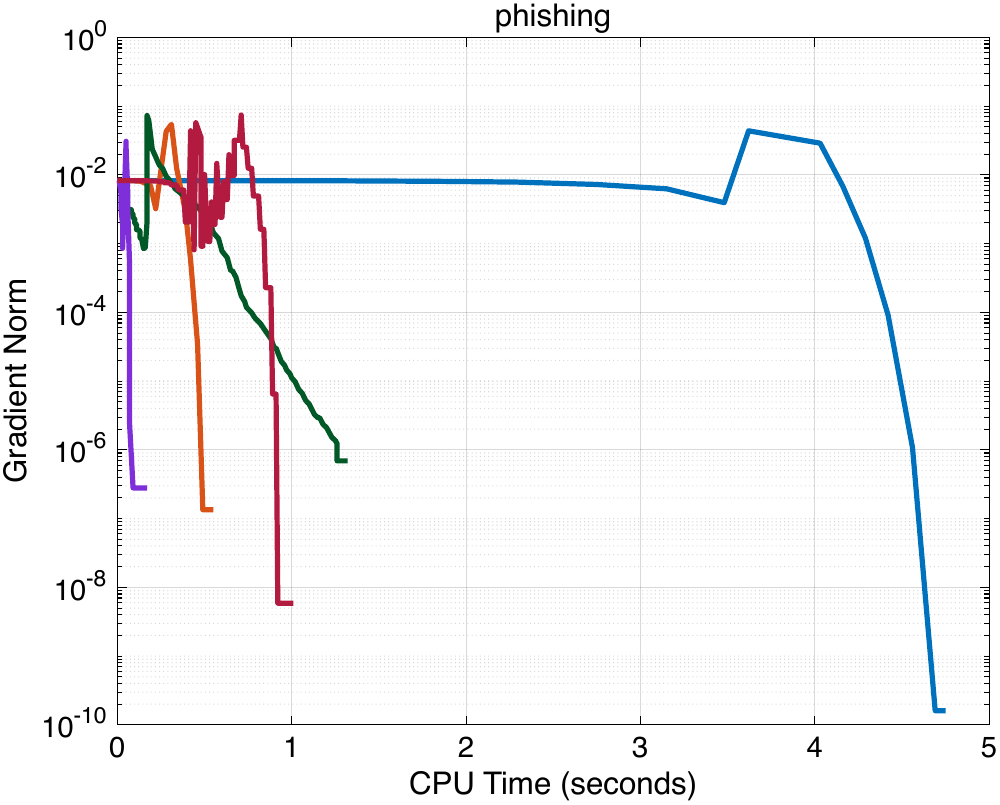}\hfill
    \includegraphics[width=0.32\linewidth]{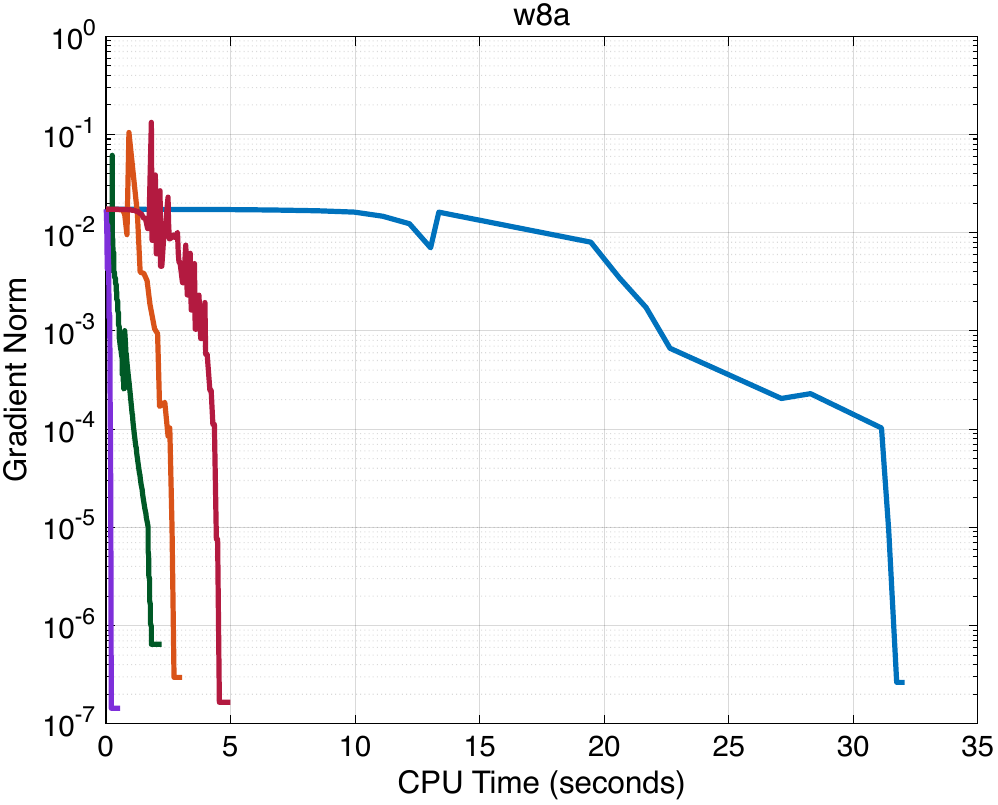}\hfill
    \includegraphics[width=0.32\linewidth]{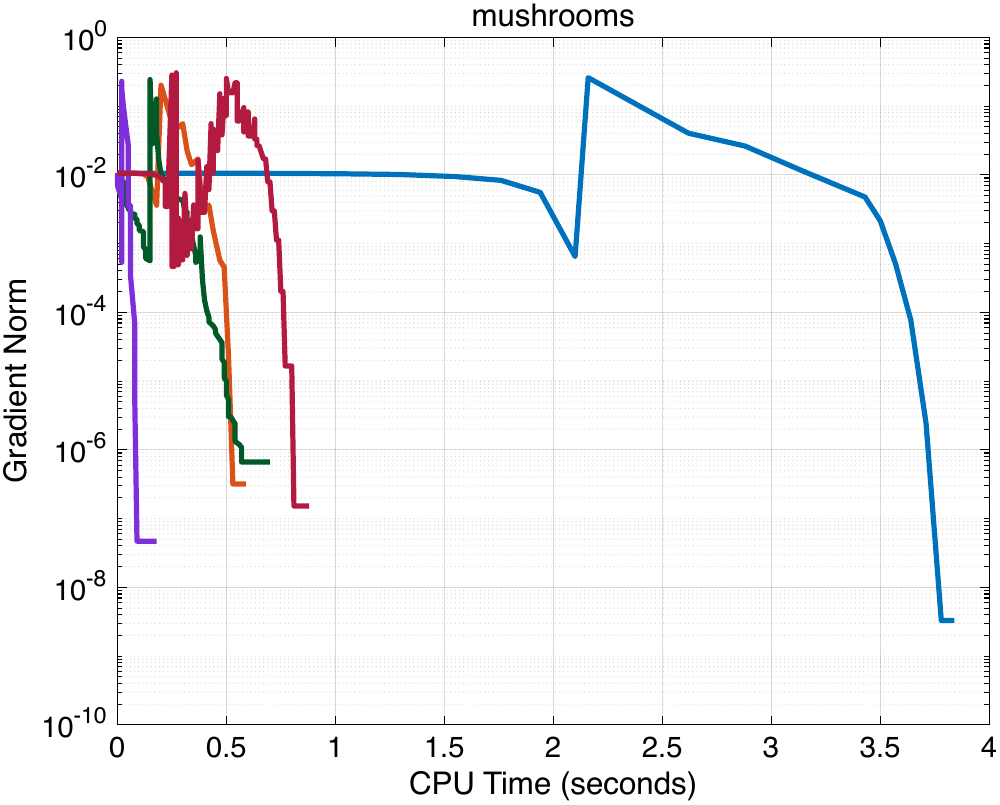}
    \par\vspace{4pt}
    \includegraphics[width=0.32\linewidth]{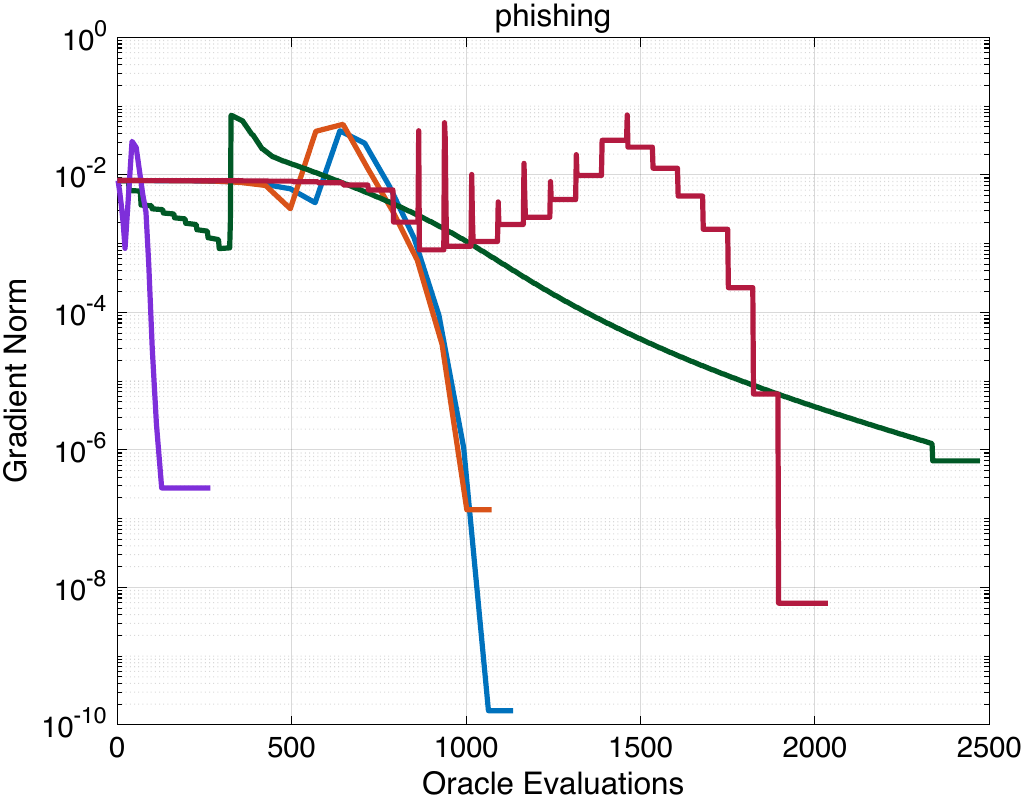}\hfill
    \includegraphics[width=0.32\linewidth]{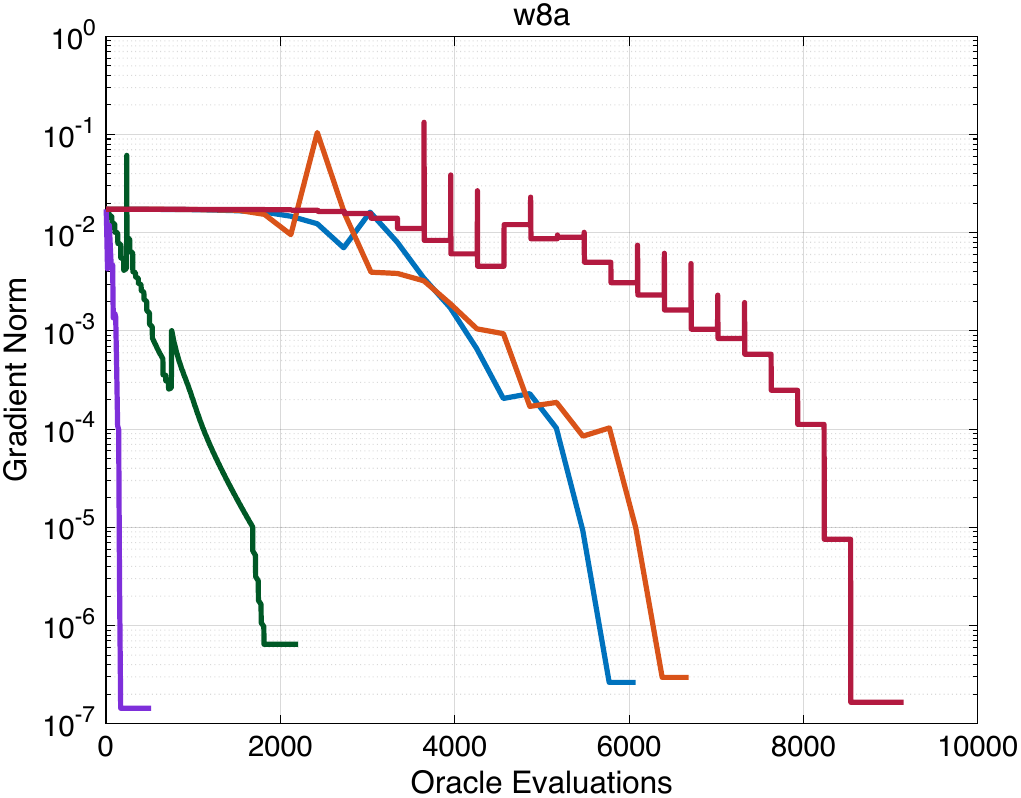}\hfill
    \includegraphics[width=0.32\linewidth]{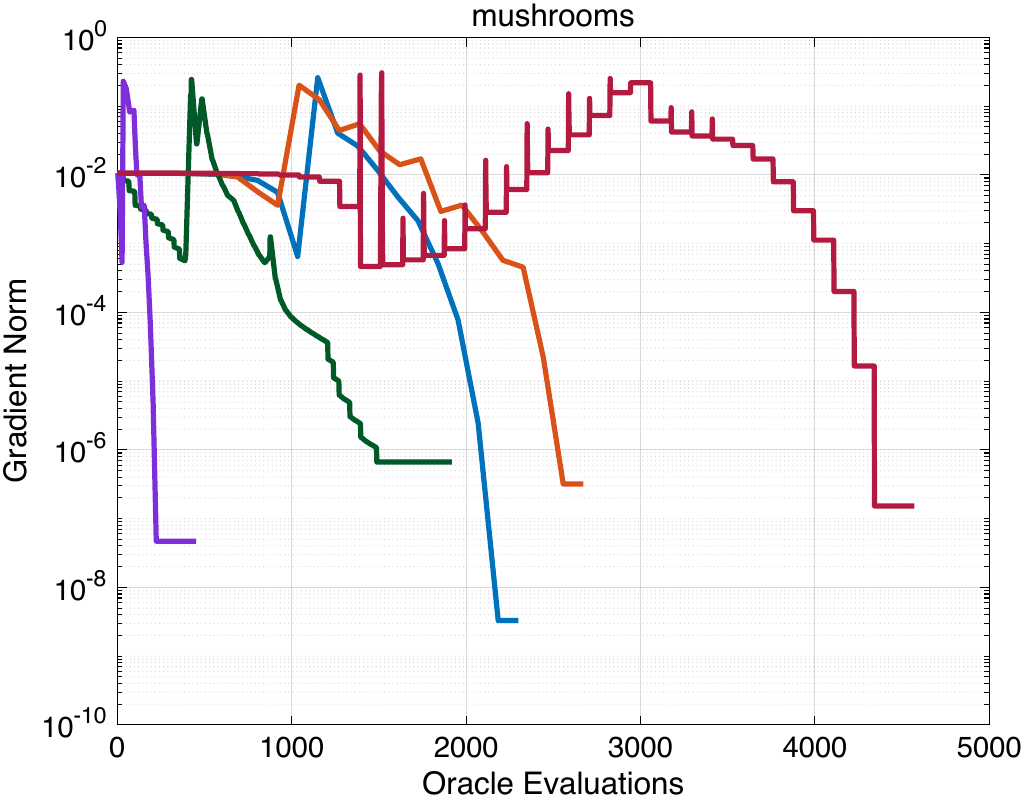}
    \par\vspace{5pt}
    \includegraphics[width=0.75\linewidth]{Figures/regression_legend.pdf}
    \caption{Gradient norms for \eqref{eq:numerical-regression} on \texttt{phishing}, \texttt{w8a}, and \texttt{mushrooms}. The top row uses CPU time and the bottom row uses Oracle Evaluations.}
    \label{fig:numerical-regression-second}
\end{figure}

\section{Concluding Remarks}

In this paper, we have developed a family of proximal point Newton methods using only first- and second-order information. We first established a single-loop framework matching the evaluation complexity of cubic regularized Newton methods, then introduced acceleration to exploit third-derivative smoothness and match the complexity of third-order tensor methods up to a logarithmic factor. Finally, we developed an adaptive Newton--CG implementation that relies on Hessian--vector products and requires no prior knowledge of Lipschitz constants. This stepwise development allows us to understand the respective roles of the proximal point framework, acceleration, and computational adaptivity. Several interesting questions remain open. First, it is worth investigating whether the logarithmic term can be removed and whether a single-loop (or even branch-free) version of \Cref{alg:accppncg} can be developed. Besides, it is also interesting to establish a lower bound on second-order oracle evaluations for the scenario in which the objective function is lower bounded, both the Hessian and third derivatives are Lipschitz continuous, but only a second-order oracle is used.

\bibliographystyle{plainnat}
\bibliography{ref}

\appendix
\crefalias{section}{appendix}

\section{Tensor-free inequalities}\label{app:hermite}
\begin{proof}[Proof of \Cref{lem:tensor-free}]
First, integration by parts gives
\begin{align*}
    &\textstyle f(y)-f(x)-\frac12\inner{\nabla f(x)+\nabla f(y)}{y-x}+\frac1{12}\inner{(\nabla^2f(y)-\nabla^2f(x))(y-x)}{y-x}\\
    =\ &\textstyle\frac1{12}\inner{(\nabla^2f(y)-\nabla^2f(x))(y-x)}{y-x}-\int_0^1\big(\tau-\frac12\big)\inner{\nabla^2f(x+\tau(y-x))(y-x)}{y-x}\mathrm{d}\tau\\
    =\ &\textstyle\int_0^1\frac{1-6\tau+6\tau^2}{12}\nabla^3f(x+\tau(y-x))[y-x]^{\otimes 3}\mathrm{d}\tau.
\end{align*}
Consequently, it suffices to bound the integral on the right-hand side. Note that the coefficient $\frac{1-6\tau+6\tau^2}{12}$ is symmetric about $\tau=1/2$. We thus have
\begin{align*}
    &\textstyle\big|\int_0^1\frac{1-6\tau+6\tau^2}{12}\nabla^3f(x+\tau(y-x))[y-x]^{\otimes 3}\mathrm{d}\tau\big|\\
    =\ &\textstyle\big|\int_0^{1/2}\frac{1-6\tau+6\tau^2}{12}\big(\nabla^3f(x+\tau(y-x))+\nabla^3f(y-\tau(y-x))\big)[y-x]^{\otimes 3}\mathrm{d}\tau\big|\\
    =\ &\textstyle\frac12\big|\int_0^{1/2}\int_0^{1/2}(\tau-s)(\tau+s-1)
    \big(\nabla^3f(x+\tau(y-x))-\nabla^3f(x+s(y-x))\\
    &\textstyle+\nabla^3f(y-\tau(y-x))-\nabla^3f(y-s(y-x))\big)
    [y-x]^{\otimes 3}\mathrm{d}s\mathrm{d}\tau\big|\\
    \le\ &\textstyle\frac{\|y-x\|^3}{2}\int_0^{1/2}\int_0^{1/2}|\tau-s|(1-\tau-s)
    \big(\|\nabla^3f(x+\tau(y-x))-\nabla^3f(x+s(y-x))\|_{\op}\\
    &+\|\nabla^3f(y-\tau(y-x))-\nabla^3f(y-s(y-x))\|_{\op}\big)
    \mathrm{d}s\mathrm{d}\tau.
\end{align*}
By \Cref{ass:third}, for $0\le s,\tau\le1/2$, it follows that
\begin{align*}
    &\textstyle\|\nabla^3f(x+\tau(y-x))-\nabla^3f(x+s(y-x))\|_{\op}\le L_T|\tau-s|\|y-x\|,\\
    &\textstyle\|\nabla^3f(y-\tau(y-x))-\nabla^3f(y-s(y-x))\|_{\op}\le L_T|\tau-s|\|y-x\|,
\end{align*}
which implies
\begin{align*}
    &\textstyle\big|f(y)-f(x)-\frac12\inner{\nabla f(x)+\nabla f(y)}{y-x}+\frac1{12}\inner{(\nabla^2f(y)-\nabla^2f(x))(y-x)}{y-x}\big|\\
    \le\ &\textstyle L_T\|y-x\|^4\int_0^{1/2}\int_0^{1/2}(\tau-s)^2(1-\tau-s)\mathrm{d}s\mathrm{d}\tau\\
    =\ &\textstyle\frac{L_T}{192}\|y-x\|^4.
\end{align*}
The inequality \eqref{eq:tensor-free-function-taylor} is proved. It remains to show \eqref{eq:chord}. For $0\le\tau\le1$, we have
\begin{align*}
    &\textstyle\|\nabla^2f((1-\tau)x+\tau y)-(1-\tau)\nabla^2f(x)-\tau\nabla^2f(y)\|_{\op}\\
    \le\ &\textstyle(1-\tau)\|\nabla^2f(x)-\nabla^2f((1-\tau)x+\tau y)+\tau\nabla^3f((1-\tau)x+\tau y)[y-x,\cdot,\cdot]\|_{\op}\\
    &+\tau\|\nabla^2f(y)-\nabla^2f((1-\tau)x+\tau y)-(1-\tau)\nabla^3f((1-\tau)x+\tau y)[y-x,\cdot,\cdot]\|_{\op},
\end{align*}
where the third-order terms cancel since $(1-\tau)\tau-\tau(1-\tau)=0$. By applying \eqref{eq:taylor} and using $x-((1-\tau)x+\tau y)=-\tau(y-x)$ and $y-((1-\tau)x+\tau y)=(1-\tau)(y-x)$, it follows that
\begin{align*}
    &\textstyle\|\nabla^2f(x)-\nabla^2f((1-\tau)x+\tau y)+\tau\nabla^3f((1-\tau)x+\tau y)[y-x,\cdot,\cdot]\|_{\op}\le\frac{L_T}{2}\tau^2\|y-x\|^2,\\
    &\textstyle\|\nabla^2f(y)-\nabla^2f((1-\tau)x+\tau y)-(1-\tau)\nabla^3f((1-\tau)x+\tau y)[y-x,\cdot,\cdot]\|_{\op}\le\frac{L_T}{2}(1-\tau)^2\|y-x\|^2,
\end{align*}
which implies 
\begin{align*}
    \textstyle\|\nabla^2f((1-\tau)x+\tau y)-(1-\tau)\nabla^2f(x)-\tau\nabla^2f(y)\|_{\op} \le \ &\textstyle\frac{L_T}{2}\big((1-\tau)\tau^2+\tau(1-\tau)^2\big)\|y-x\|^2 \\
    = \ &\textstyle \frac{L_T}{2}\tau(1-\tau)\|y-x\|^2.
\end{align*}
The proof is complete.
\end{proof}

\section{Capped Conjugate Gradient}\label[appendix]{app:capped-cg}
We give the capped CG procedure used in \eqref{eq:ncg-cg-call}, following Algorithm~1 of \cite{royeroneillwright2018}. For symmetric $H\in\bR^{d\times d}$, $g\in\bR^d$, and $\mu>0$, the procedure approximately solves $(H+2\mu I)y=-g$ or returns a nonzero direction of negative curvature for $H$. We denote the CG iterates, residuals, and search directions by $y^j$, $r^j=(H+2\mu I)y^j+g$, and $p^j$, respectively. The output types are \texttt{SOL} and \texttt{NC}, as in \Cref{alg:accppncg}. The capped CG procedure does not require an a priori upper bound on $\|H\|_{\op}$. Instead, it maintains a scalar $M$, initialized to zero, to bound the ratios $\|Hv\|/\|v\|$ observed during CG for nonzero vectors $v$. Whenever a tested vector $v$ satisfies $\|Hv\|>M\|v\|$, we set $M=\|Hv\|/\|v\|$ and recompute
\begin{equation}\label{eq:ccg-parameters}
    \textstyle\kappa=\frac{M+2\mu}{\mu},\ \hat\zeta=\frac{\zeta}{3\kappa},\ \tau=\frac{\sqrt\kappa}{\sqrt\kappa+1},\ T=\frac{4\kappa^4}{(1-\sqrt\tau)^2}.
\end{equation}
If the preceding curvature test does not terminate CG, the residual test $\|r^j\|\le\hat\zeta\|g\|$ returns $(\texttt{SOL},y^j)$. If instead $\|r^j\|>\sqrt T\tau^{j/2}\|g\|$ and none of the preceding tests terminates CG, one further CG update produces $y^{j+1}$, and Theorem~2 of \cite{royeroneillwright2018} guarantees an index $i\in\{0,\ldots,j-1\}$ such that
\begin{equation}\label{eq:ccg-negative-curvature}
    \textstyle\inner{(H+2\mu I)(y^{j+1}-y^i)}{y^{j+1}-y^i}<\mu\|y^{j+1}-y^i\|^2.
\end{equation}
The last branch of \Cref{procedure:capped-cg} then returns $(\texttt{NC},y^{j+1}-y^i)$. The following lemma collects the capped-CG guarantees of \cite{royeroneillwright2018}. For the call in \eqref{eq:ncg-cg-call}, we have $H+2\mu I=\nabla^2\Phi_k(x^{k,t})$, $g=-\nabla\Phi_k(x^{k,t})$, and $\zeta=1/32$. Therefore, the \texttt{SOL} output $d^{k,t}=y$ satisfies \eqref{eq:binary-sol}. For a \texttt{NC} output, setting $u^{k,t}=y/\|y\|$ gives \eqref{eq:binary-nc}.

\begin{lemma}\label{lem:ccg-interface}
In exact arithmetic, \Cref{procedure:capped-cg} terminates deterministically. Its \texttt{SOL} output satisfies
\begin{equation*}
    \inner{(H+2\mu I)y}{y}\ge\mu\|y\|^2,\ \|(H+2\mu I)y+g\|\le\min\Big\{\frac{\zeta\mu}{2}\|y\|,\frac{\zeta}{6}\|g\|\Big\}.
\end{equation*}
Its \texttt{NC} output satisfies $y\ne0$ and $\inner{Hy}{y}<-\mu\|y\|^2$. The number of matrix-vector products is
\begin{equation*}
    \widetilde\cO\big(\min\{d,\sqrt{2+\|H\|_{\op}/\mu}\}\big).
\end{equation*}
\end{lemma}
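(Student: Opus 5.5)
The plan is to derive every claim of \Cref{lem:ccg-interface} from the guarantees of Algorithm~1 and Theorem~2 of \cite{royeroneillwright2018}, adapted to the shift $2\mu I$ and the adaptive scalar $M$ in \eqref{eq:ccg-parameters}. Throughout, write $\bar H=H+2\mu I$. I will take the termination tests of \Cref{procedure:capped-cg} in the order described above: curvature tests on $p^j$ and $y^j$, then the residual test, then the slow-decay test leading to \eqref{eq:ccg-negative-curvature}.

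\textbf{Termination and the iteration count.} While no curvature test fires, every search direction satisfies $\inner{\bar Hp^j}{p^j}\ge\mu\|p^j\|^2$. Hence CG on $\bar H$ behaves as on a matrix with condition number at most $\kappa$ on the Krylov space explored. Standard CG theory then gives the residual bound $\|r^j\|\le\sqrt T\tau^{j/2}\|g\|$ whenever the test inequalities hold. There are two possible outcomes. If the bound keeps holding, the residual test succeeds once $\sqrt T\tau^{j/2}\le\hat\zeta$, that is, after $\cO(\sqrt\kappa\log(\kappa/\zeta))$ iterations. If the bound is ever violated, the NC branch fires after one more step. Each increase of $M$ only enlarges $\kappa$, and $M\le\|H\|_{\op}$ always holds. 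So the total count is $\widetilde\cO(\sqrt{2+\|H\|_{\op}/\mu})$. In exact arithmetic CG also terminates within $d$ steps, because the residual vanishes by step $d$, and together these give the $\min\{d,\cdot\}$ bound.

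\textbf{SOL guarantees.} The curvature bound follows from the CG structure. The iterate $y^j$ lies in the span of $p^0,\dots,p^{j-1}$, and these directions are $\bar H$-conjugate, so each passes the test $\inner{\bar Hp^i}{p^i}\ge\mu\|p^i\|^2$. The procedure also tests $y^j$ directly, as in Royer--O'Neill--Wright, which gives $\inner{\bar Hy}{y}\ge\mu\|y\|^2$. For the residual bound, the test $\|r^j\|\le\hat\zeta\|g\|$ with $\hat\zeta=\zeta/(3\kappa)\le\zeta/6$ yields the $\tfrac{\zeta}{6}\|g\|$ part. For the other part, the curvature bound and Cauchy--Schwarz give $\mu\|y\|\le\|\bar Hy\|$. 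Combined with $\|\bar Hy+g\|\le\hat\zeta\|g\|$, this gives $\|g\|\le\|\bar Hy\|+\hat\zeta\|g\|$. Using $\|\bar Hy\|\le(M+2\mu)\|y\|=\kappa\mu\|y\|$ I get $\|g\|\le 2\kappa\mu\|y\|$, and therefore $\hat\zeta\|g\|\le\tfrac{\zeta}{3\kappa}\cdot2\kappa\mu\|y\|\le\tfrac{\zeta\mu}{2}\|y\|$. One point needs care here: the inequality $\|\bar Hy\|\le(M+2\mu)\|y\|$ only holds if $y$ is among the vectors whose ratio updates $M$. I will include $y^j$ in the tested vectors; this is the main technical point to verify against the procedure.

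\textbf{NC guarantees.} An NC output from a direct curvature test on some $v\in\{p^j,y^j\}$ gives $\inner{\bar Hv}{v}<\mu\|v\|^2$. This is the same as $\inner{Hv}{v}<-\mu\|v\|^2$, and $v\neq0$. The other NC output is $y^{j+1}-y^i$ from \eqref{eq:ccg-negative-curvature}, which gives the same inequality, and it is nonzero because the strict inequality fails at $0$.

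The main obstacle is justifying Theorem~2 of \cite{royeroneillwright2018} with a \emph{dynamically} updated $M$. Its proof uses $\|Hv\|\le M\|v\|$ only for the specific vectors on which the argument relies, namely the iterates and the residuals. I would check that the procedure tests exactly those vectors, so that the recomputed $(\kappa,\tau,T)$ are valid at the moment the slow-decay test fires.
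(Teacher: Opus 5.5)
Your route is the paper's. The paper gives no argument of its own and simply imports the capped-CG guarantees of \cite{royeroneillwright2018}, which is what you reconstruct. Your ``main obstacle'' is already settled in that source: Algorithm~1 there updates $M$ on the fly by the same rule as \Cref{procedure:capped-cg} (with $M=0$ when no bound is supplied), and its Theorem~2 is proved for that adaptive rule. Moreover, the procedure tests $p^j,y^j,r^j$ before any termination test, so $\|Hy^j\|\le M\|y^j\|$ is available at a \texttt{SOL} exit.

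One step, however, fails as written: the \texttt{SOL} residual bound. Loosening $1/(1-\hat\zeta)$ to $2$ gives $\|g\|\le2\kappa\mu\|y\|$, and hence only $\hat\zeta\|g\|\le\frac23\zeta\mu\|y\|$. That does not imply the claimed $\frac{\zeta\mu}{2}\|y\|$. Keep the factor instead. Since $\kappa\ge2$, you have $\hat\zeta\le\zeta/6<1/6$, and from $(1-\hat\zeta)\|g\|\le\kappa\mu\|y\|$,
\[
\|(H+2\mu I)y+g\|\le\hat\zeta\|g\|\le\frac{\zeta}{3\kappa}\cdot\frac{\kappa\mu\|y\|}{1-\hat\zeta}=\frac{\zeta\mu\|y\|}{3(1-\hat\zeta)}<\frac25\,\zeta\mu\|y\|,
\]
which is the computation behind the cited result.

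Three smaller corrections:

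\emph{The conjugacy sentence.} It does not yield $\inner{(H+2\mu I)y}{y}\ge\mu\|y\|^2$. In CG one has $\alpha_i>0$ and $\inner{p^i}{p^l}\ge0$, so $\|y\|^2\ge\sum_i\alpha_i^2\|p^i\|^2$, which is the wrong direction. Only the direct test on $y^j$, which precedes the residual test, gives this bound.

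\emph{The residual bound.} $\|r^j\|\le\sqrt T\tau^{j/2}\|g\|$ is not a consequence of standard CG theory, because the tests certify curvature only along particular vectors. It holds simply because the slow-decay test did not fire. The iteration bound then follows because the index beyond which $\sqrt T\tau^{j/2}\le\hat\zeta$ is nondecreasing in $M$, and $M\le\|H\|_{\op}$. The genuinely nontrivial input is Theorem~2's guarantee that the index $i$ exists.

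\emph{The product count.} Each iteration uses a constant number of products, but you should also account for locating $i$ in the last branch. Either store the vectors $Hy^i$ already computed for the $M$-test, or rerun CG, which at most doubles the count.
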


\begin{procedure}[H]
\caption{Capped Conjugate Gradient}\label{procedure:capped-cg}
\begin{algorithmic}[1]
\Require Symmetric $H\in\bR^{d\times d}$, $g\in\bR^d$, regularization parameter $\mu>0$, relative accuracy $\zeta\in(0,1)$.
\Ensure $(\texttt{SOL},y)$ or $(\texttt{NC},y)$ satisfying \Cref{lem:ccg-interface}.
\State \textbf{Initialization:} $M=0$, $y^0=0$, $r^0=g$, $p^0=-g$, $j=0$; compute \eqref{eq:ccg-parameters}.
\If{$g=0$}
  \State \Return $(\texttt{SOL},0)$.
\EndIf
\If{$\inner{(H+2\mu I)p^0}{p^0}<\mu\|p^0\|^2$}
  \State \Return $(\texttt{NC},p^0)$.
\EndIf
\State Set $M=\|Hp^0\|/\|p^0\|$ and recompute \eqref{eq:ccg-parameters}.
\While{true}
  \State $\alpha_j=\|r^j\|^2/\inner{(H+2\mu I)p^j}{p^j}$.
  \State $y^{j+1}=y^j+\alpha_jp^j$, $r^{j+1}=r^j+\alpha_j(H+2\mu I)p^j$.
  \State $\beta_{j+1}=\|r^{j+1}\|^2/\|r^j\|^2$, $p^{j+1}=-r^{j+1}+\beta_{j+1}p^j$, $j=j+1$.
  \For{$v\in\{p^j,y^j,r^j\}$}
    \If{$\|Hv\|>M\|v\|$}
      \State Set $M=\|Hv\|/\|v\|$ and recompute \eqref{eq:ccg-parameters}.
    \EndIf
  \EndFor
  \If{$\inner{(H+2\mu I)y^j}{y^j}<\mu\|y^j\|^2$}
    \State \Return $(\texttt{NC},y^j)$.
  \ElsIf{$\|r^j\|\le\hat\zeta\|g\|$}
    \State \Return $(\texttt{SOL},y^j)$.
  \ElsIf{$\inner{(H+2\mu I)p^j}{p^j}<\mu\|p^j\|^2$}
    \State \Return $(\texttt{NC},p^j)$.
  \ElsIf{$\|r^j\|>\sqrt T\tau^{j/2}\|g\|$}
    \State $\alpha_j=\|r^j\|^2/\inner{(H+2\mu I)p^j}{p^j}$, $y^{j+1}=y^j+\alpha_jp^j$.
    \State Find $i\in\{0,\ldots,j-1\}$ satisfying \eqref{eq:ccg-negative-curvature}.
    \State \Return $(\texttt{NC},y^{j+1}-y^i)$.
  \EndIf
\EndWhile
\end{algorithmic}
\end{procedure}

\section{Minimum eigenvalue oracle and its implementation}\label[appendix]{app:meo}
We give the minimum eigenvalue oracle used in \eqref{eq:ncg-meo-call}, following Appendix~B.3 of \cite{royeroneillwright2018}. Its inputs are a symmetric matrix $H\in\bR^{d\times d}$, an accuracy tolerance $\varepsilon>0$, and a failure probability parameter $\rho\in(0,1)$. The oracle computes a unit vector $u$ and its Rayleigh quotient $\lambda=\inner{Hu}{u}$. It returns $(\texttt{NC},\lambda,u)$ when $\lambda\le-\varepsilon/2$ and otherwise returns $(\texttt{CERT},\lambda,u)$, asserting $\lambda_{\min}(H)\ge-\varepsilon$. Similarly, the oracle does not require an a priori upper bound on $\|H\|_{\op}$. It uses a single Lanczos run with a starting vector drawn independently and uniformly from the unit sphere. After $j$ updates, let $Q_j=[q^1,\ldots,q^j]$ and $T_j=Q_j^THQ_j$, where $Q_j$ has orthonormal columns and $T_j$ is symmetric tridiagonal. The first phase estimates $\|H\|_{\op}$ from the extreme eigenvalues of $T_j$, and the second phase continues the same run to obtain the required accuracy. The following lemma collects the corresponding guarantees of \cite{royeroneillwright2018}. 

\begin{procedure}[H]
\caption{Minimum Eigenvalue Oracle}\label{procedure:meo}
\begin{algorithmic}[1]
\Require Symmetric $H\in\bR^{d\times d}$, accuracy tolerance $\varepsilon>0$, failure probability parameter $\rho\in(0,1)$.
\Ensure $(\texttt{NC},\lambda,u)$ or $(\texttt{CERT},\lambda,u)$ satisfying \Cref{lem:meo-guarantee}.
\State \textbf{Initialization:} $q^0=0$, $\beta_0=0$, $j=0$; draw $q^1$ independently and uniformly from the unit sphere.
\State $j_M=\min\{d,1+\lceil\tfrac12\log(25d/\rho^2)\rceil\}$.
\While{$j<j_M$}
  \State Set $j=j+1$ and perform \Cref{procedure:lanczos-update}.
  \State \textbf{If} $\beta_j=0$, go to \textbf{Classification}.
\EndWhile
\State $M=2\|T_{j_M}\|_{\op}=2\max\{|\lambda_{\min}(T_{j_M})|,|\lambda_{\max}(T_{j_M})|\}$.
\State $m=\min\{d,\max\{j_M,1+\lceil\tfrac12\log(25d/\rho^2)\sqrt{M/\varepsilon}\rceil\}\}$.
\While{$j<m$}
  \State Set $j=j+1$ and perform \Cref{procedure:lanczos-update}.
  \State \textbf{If} $\beta_j=0$, go to \textbf{Classification}.
\EndWhile
\State \textbf{Classification:} compute $\lambda=\lambda_{\min}(T_j)$ and $z$ with $T_jz=\lambda z$, $\|z\|=1$; set $u=Q_jz$.
\If{$\lambda\le-\varepsilon/2$}
  \State \Return $(\texttt{NC},\lambda,u)$.
\Else
  \State \Return $(\texttt{CERT},\lambda,u)$, certifying $\lambda_{\min}(H)\ge-\varepsilon$.
\EndIf
\end{algorithmic}
\end{procedure}

\begin{procedure}[H]
\caption{Lanczos Update}\label{procedure:lanczos-update}
\begin{algorithmic}[1]
\Require Symmetric $H\in\bR^{d\times d}$, index $1\le j\le d$, orthonormal vectors $q^1,\ldots,q^j$, $q^0=0$, $\beta_0=0$.
\Ensure Updated $Q_j$, $T_j$, $\beta_j$, and $q^{j+1}$ when $\beta_j>0$ and $j<d$.
\State $w=Hq^j-\beta_{j-1}q^{j-1}$.
\State $\alpha_j=\inner{w}{q^j}$, $w=w-\alpha_jq^j$.
\For{$i=1,\ldots,j$}
  \State $w=w-\inner{w}{q^i}q^i$.
\EndFor
\State $\beta_j=\|w\|$, $Q_j=[q^1,\ldots,q^j]$.
\State Form symmetric tridiagonal $T_j$ with diagonal $\alpha_1,\ldots,\alpha_j$ and off-diagonal $\beta_1,\ldots,\beta_{j-1}$.
\If{$\beta_j>0$ and $j<d$}
  \State $q^{j+1}=w/\beta_j$.
\EndIf
\end{algorithmic}
\end{procedure}

\begin{lemma}\label{lem:meo-guarantee}
In exact arithmetic, for fixed inputs $H=H^\top$, $\varepsilon>0$, and $\rho\in(0,1)$, with probability at least $1-\rho$, \Cref{procedure:meo} terminates and returns a Ritz pair satisfying
\begin{equation}\label{eq:meo-accuracy}
    \|u\|=1,\ \lambda=\inner{Hu}{u},\ 0\le\lambda-\lambda_{\min}(H)\le\varepsilon/2.
\end{equation}
Its \texttt{NC} output always satisfies $\inner{Hu}{u}\le-\varepsilon/2$, and an incorrect \texttt{CERT} has probability at most $\rho$. The number of matrix-vector products with $H$ is
\begin{equation*}
    \widetilde\cO\big(\min\{d,\sqrt{1+\|H\|_{\op}/\varepsilon}\}\big).
\end{equation*}
\end{lemma}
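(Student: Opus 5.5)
The plan is to verify the deterministic parts of \Cref{lem:meo-guarantee} directly and to reduce the probabilistic accuracy claim to the randomized Lanczos bound of Kuczy\'nski--Wo\'zniakowski, as in Appendix~B.3 of \cite{royeroneillwright2018}.

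\textbf{Deterministic facts.} Since $Q_j$ has orthonormal columns and $\|z\|=1$, the vector $u=Q_jz$ is a unit vector. Moreover,
\[
\inner{Hu}{u}=z^\top Q_j^\top HQ_jz=z^\top T_jz=\lambda .
\]
The Rayleigh quotient then gives $\lambda\ge\lambda_{\min}(H)$, which is the lower half of \eqref{eq:meo-accuracy}. The \texttt{NC} branch is entered only when $\lambda\le-\varepsilon/2$, so $\inner{Hu}{u}\le-\varepsilon/2$ holds for every \texttt{NC} output regardless of randomness. Termination is deterministic, since both loops are capped by $d$.

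\textbf{Two special exits.}
\begin{itemize}
\item If some $\beta_j=0$, the Krylov space generated by $q^1$ is $H$-invariant, so $\lambda_{\min}(T_j)$ is the smallest eigenvalue of $H$ on that space. A uniformly random $q^1$ has, with probability one, a nonzero component in the eigenspace of $\lambda_{\min}(H)$. Hence $\lambda=\lambda_{\min}(H)$ almost surely.
\item If $j=d$ is reached, $Q_d$ is square orthogonal, and again $\lambda=\lambda_{\min}(H)$.
\end{itemize}

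\textbf{Probabilistic core.} I will use the Kuczy\'nski--Wo\'zniakowski bound for the largest eigenvalue of a positive semidefinite matrix $A$ with random start. After $k$ Lanczos steps,
\[
\Pr\big[\lambda_{\max}(T_k^{A})<(1-\epsilon)\lambda_{\max}(A)\big]\le 1.648\sqrt d\,e^{-\sqrt\epsilon(2k-1)}.
\]
Lanczos is shift invariant, so I apply this to $A=\|H\|_{\op}I-H$ for the bottom of the spectrum, and to $\|H\|_{\op}I+H$ for the top. The argument has two phases, each allotted failure probability $\rho/2$.
\begin{itemize}
\item \emph{Phase one.} With $k=j_M\approx1+\tfrac12\log(25d/\rho^2)$ and a constant $\epsilon$, the bound shows that some extreme Ritz value of $T_{j_M}$ captures a constant fraction of the spectral range. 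This should give $\|H\|_{\op}\lesssim M$, and interlacing gives $M\le2\|H\|_{\op}$. Getting these constants to match the prescribed formula for $j_M$ is the step I expect to be the main obstacle. The difficulty is that the bound controls the distance from the extreme eigenvalues relative to the spread, not $\|H\|_{\op}$ directly. I would first handle it by bounding the spread $\lambda_{\max}(H)-\lambda_{\min}(H)\le2\|H\|_{\op}$ and the corresponding Ritz spread.
\item \emph{Phase two.} Here I take $\epsilon\approx\varepsilon/(2M)$, so the target error relative to the spectral width is $\varepsilon/2$. The bound then needs $k\gtrsim\tfrac12\log(25d/\rho^2)\sqrt{M/\varepsilon}$, which is exactly $m$. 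This yields $\lambda-\lambda_{\min}(H)\le\varepsilon/2$ with probability at least $1-\rho/2$.
\end{itemize}
A union bound over the two phases gives \eqref{eq:meo-accuracy} with probability at least $1-\rho$. On that event, a \texttt{CERT} output has $\lambda>-\varepsilon/2$, so
\[
\lambda_{\min}(H)\ge\lambda-\varepsilon/2>-\varepsilon .
\]
Thus an incorrect \texttt{CERT} occurs only on the failure event, which has probability at most $\rho$.

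\textbf{Complexity.} Each Lanczos update uses one product with $H$, so the total number of products is at most
\[
m\le\min\big\{d,\ \cO\big(\log(d/\rho)\sqrt{1+2\|H\|_{\op}/\varepsilon}\big)\big\}
=\widetilde\cO\big(\min\{d,\sqrt{1+\|H\|_{\op}/\varepsilon}\}\big),
\]
using $M\le2\|H\|_{\op}$.
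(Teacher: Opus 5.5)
Your outline follows the right route. The paper gives no proof of this lemma; it imports it from Appendix~B.3 of \cite{royeroneillwright2018}, whose guarantees rest on the Kuczy\'nski--Wo\'zniakowski randomized Lanczos bound applied to shifts of $H$. Your deterministic parts are fine: the unit Ritz vector, $\lambda=\inner{Hu}{u}\ge\lambda_{\min}(H)$, the \texttt{NC} inequality, exactness at breakdown or at $j=d$, the incorrect-\texttt{CERT} deduction, and the product count via $M\le2\|H\|_{\op}$. The probabilistic core, however, is not yet a proof. It stops at the step you flag, and phase two misuses the bound.

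\textbf{Phase one.} ``$\|H\|_{\op}\lesssim M$'' is not enough. In phase two the exponent scales like $\sqrt{M/\|H\|_{\op}}$. If you only know $M\ge c\|H\|_{\op}$ with $c<1$, the failure bound degrades to $1.648\sqrt d\,(5\sqrt d/\rho)^{-\sqrt c}$, which is not $\le\rho$ uniformly in $d$ and $\rho$. So you need $M\ge\|H\|_{\op}$ with constant one. The spread detour points the wrong way. An upper bound on $\lambda_{\max}(H)-\lambda_{\min}(H)$ cannot yield a lower bound on $M$, and the spread may be far smaller than $\|H\|_{\op}$ (take $H$ near a multiple of $I$).

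The missing step is one application of the bound on the side that attains the norm. Pick the sign $\sigma\in\{-1,1\}$, determined by $H$ alone, with $\lambda_{\max}(\sigma H)=\|H\|_{\op}$. Apply the bound to $A=\|H\|_{\op}I+\sigma H$, whose top eigenvalue is $2\|H\|_{\op}$, with $\epsilon=1/4$ and $k=j_M$. Since $2j_M-1\ge1+\ln(25d/\rho^2)$ (and if $j_M=d$ everything is exact), the failure probability is at most $1.648\,e^{-1/2}\rho/5<\rho/4$. On success, $\lambda_{\max}(\sigma T_{j_M})\ge\|H\|_{\op}/2$, hence $M=2\|T_{j_M}\|_{\op}\ge\|H\|_{\op}$.

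\textbf{Phase two.} You take $\epsilon\approx\varepsilon/(2M)$ and $k\approx m$. But $M$ and $m$ are random, determined by the same start vector, while the bound holds for a fixed matrix and a fixed $k$. The accuracy is also off. The top eigenvalue of $\|H\|_{\op}I-H$ can be $2\|H\|_{\op}$ while $M$ may be as small as $\|H\|_{\op}$, so $\epsilon=\varepsilon/(2M)$ does not give absolute error $\varepsilon/2$.

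Instead, fix the deterministic index $k^*=1+\lceil\tfrac12\ln(25d/\rho^2)\sqrt{\|H\|_{\op}/\varepsilon}\rceil$. Apply the bound to $\|H\|_{\op}I-H$ with $\epsilon=\varepsilon/(4\|H\|_{\op})$. The exponent is then at least $\tfrac12\ln(25d/\rho^2)$, so the inequality $\lambda_{\min}(T_{k^*})\le\lambda_{\min}(H)+\varepsilon/2$ fails with probability at most $1.648\rho/5<\rho/2$.

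On the phase-one event, $m\ge\min\{d,k^*\}$. Since $\lambda_{\min}(T_j)$ is nonincreasing in $j$ (Courant--Fischer over nested Krylov spaces), the accuracy transfers to the returned $\lambda$; the cases $m=d$ and breakdown are exact. A union bound over the two events then gives the accuracy claim with probability at least $1-\rho$.
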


For the call in \eqref{eq:ncg-meo-call}, we have $\varepsilon=\mu/16$. If \texttt{NC} is returned for a sign $s$, set $u^{k,t}=u$ and $s^{k,t}=s$. Since the matrix in \eqref{eq:ncg-gate} is $H+3\mu I/32$, the classification test gives
\begin{equation*}
    \textstyle\inner{(H+3\mu I/32)u}{u}=\lambda+3\mu/32\le\mu/16,
\end{equation*}
which yields \eqref{eq:ncg-gate}. If both signs return \texttt{CERT} and \eqref{eq:meo-accuracy} holds for both calls, then $\lambda_{\min}(H)\ge\lambda-\mu/32>-\mu/16$ for each matrix in \eqref{eq:ncg-meo-call}. Hence, $\lambda_{\min}(H+3\mu I/32)>\mu/32$ for both signs, which yields \eqref{eq:ncg-certificate}. For the optional call in \eqref{eq:ncg-linesearch-call}, we have $\varepsilon=\epsH/2$ and $H=\nabla^2f(x^k-\beta^j d^{k,0})+\epsH I/2$. If \texttt{CERT} is returned and \eqref{eq:meo-accuracy} holds for this call, then
\begin{equation*}
    \textstyle\lambda_{\min}\big(\nabla^2f(x^k-\beta^j d^{k,0})\big)=\lambda_{\min}(H)-\epsH/2\ge\lambda-3\epsH/4>-\epsH,
\end{equation*}
where the last inequality uses $\lambda>-\epsH/4$. All calls use $\rho=\delta/[\ell(\ell+1)]$. Then \eqref{eq:meo-accuracy} holds for all calls simultaneously with probability at least $1-\delta$, as shown in the proof of \Cref{thm:binary}.

\end{document}